\newcommand{\C}{{\mathbb C}}
\newcommand{\stable}{{\mathrm{st}}}
\newcommand{\unst}{{\mathrm{unst}}}
\newcommand{\F}{{\mathbb F}}
\newcommand{\mcH}{{\mathcal H}}
\newcommand{\G}{{\mathbb G}}
\newcommand{\cG}{{\mathcal G}}
\newcommand{\cL}{{\mathcal L}}
\newcommand{\cO}{{\mathfrak o}}
\newcommand{\mcQ}{{\mathcal Q}}
\newcommand{\Oo}{\mathcal{O}}
\newcommand{\p}{{\mathfrak p}}
\newcommand{\jacquet}{{\mathrm{r}}}
\newcommand{\SL}{{\mathrm{SL}}}
\newcommand{\GL}{{\mathrm{GL}}}
\newcommand{\SO}{{\mathrm{SO}}}
\newcommand{\tO}{{\mathrm{O}}}
\newcommand{\supercusp}{{\mathrm{s.c.}}}
\newcommand{\PGL}{{\mathrm{PGL}}}
\newcommand{\Cent}{{\mathrm{Z}}}
\DeclareMathOperator{\diag}{diag}
\DeclareMathOperator{\Ind}{Ind}
\DeclareMathOperator{\cInd}{c-Ind}
\DeclareMathOperator{\tr}{tr}
\DeclareMathOperator{\sgn}{sgn}
\DeclareMathOperator{\St}{St}
\newcommand{\cusp}{\mathrm{cusp}}
\newcommand{\princ}{\mathrm{princ}}
\newcommand{\Ch}{{\mathrm{Ch}}}
\newcommand{\rss}{{\mathrm{rss}}}
\numberwithin{equation}{subsection}
\newtheorem{thm}{Theorem}[subsection]
\newtheorem*{thm*}{Theorem}
\newtheorem{theorem}[thm]{Theorem}
\newtheorem{corollary}[thm]{Corollary}
\newtheorem{lemma}[thm]{Lemma}
\newtheorem{proposition}[thm]{Proposition}
\theoremstyle{definition}
\newtheorem{definition}[thm]{Definition}
\newtheorem{remark}[thm]{Remark}
\newtheorem{property}[equation]{Property}
\theoremstyle{definition}
\title[Local Langlands Correspondence for $G_2$]{The explicit Local Langlands Correspondence for $G_2$ II: character formulas and stability}
\author{Kenta Suzuki}
\address{M.I.T., 77 Massachusetts Avenue,
Cambridge, MA, USA}
\email{kjsuzuki@mit.edu}
\author{Yujie Xu}
\address{M.I.T., 77 Massachusetts Avenue,
Cambridge, MA, USA}
\email{yujiexu@mit.edu}
\begin{document}

\maketitle

\begin{abstract}
We write down character formulas for 
representations of $G_2$ considered in \cite{AX-LLC}, and show that stability for $L$-packets uniquely pins down the Local Langlands Correspondence constructed in \cite{AX-LLC}, thus proving unique characterization of the LLC \textit{loc.cit.}  
\end{abstract}

\tableofcontents

\section{Introduction}
In this article, we complete the unique characterization of the explicit local Langlands correspondence for $p$-adic $G_2$ constructed in \cite{AX-LLC}. More precisely, we use stability property of $L$-packets to uniquely pin down the choices of twists in the $L$-packets from \cite{AX-LLC}.

The rough idea is as follows: we explicitly calculate Harish-Chandra characters for the representations (including non-supercuspidals) in certain neighborhoods of semisimples in $G_2$ (see for example \S \ref{section: Green-fxn-pi-eta2}, \S\ref{character-2x2-semisimple-section}, \S\ref{3x3-character-section} and \S\ref{3x3-character-section-ss}). 
In particular, stability property \ref{property:atomic-stability} (as formulated by DeBacker and Kaletha) 
implies the stability of the sum of characters in an $L$-packet locally around each semisimple. Using \cite{DeBacker-Kazhdan-G2} (which builds on some works of Waldspurger), we deduce that the sum of two specific characters (one for a non-supercuspidal and another one for a \textit{singular} supercuspidal) are stable, thus pinning down the size $2$ mixed packets in \cite{AX-LLC} (see Theorem \ref{stability-theorem-2x2}). The size $3$ mixed packets are pinned down similarly (see Theorem \ref{stability-theorem-3x3} and Theorem \ref{stability-theorem}). Our computations involve a refinement of Roche's Hecke algebra isomorphisms (see~\S\ref{roche-iso-section}).

\section{Preliminaries}\addtocontents{toc}{\protect\setcounter{tocdepth}{1}}

Let $\pi$ be an admissible representation of $G_2$, which gives rise to a distribution $\Ch_\pi$ on $C_c^\infty(G_2)$. Then \cite[Theorem~16.3]{harish-chandra-local-character} shows that $\Ch_\pi$ can be represented by a locally constant function on $G_2^{\rss}$, the regular semisimple locus in $G_2$.

\subsection{Stability of $L$-packets}
\begin{property}[DeBacker, Kaletha]\label{property:atomic-stability}
Let $\varphi$ be a discrete $L$-parameter. There exists a non-zero $\C$-linear combination 
\begin{equation}\label{stable-distribution-Lpacket}
\sum\limits_{\pi\in\Pi_{\varphi}}\dim(\rho_\pi)\Ch_{\pi},\quad\text{for }z_{\pi}\in\C,
\end{equation}
which is stable. In fact, one can take $z_\pi=\dim(\rho_\pi)$ where $\rho_\pi$ is the enhancement of the $L$-parameter. Moreover, no proper subset of $\Pi_{\varphi}$ has this property. 
\end{property}

\subsection{Parahoric subgroups}\label{parahoric-subsection}

We fix the choice of the following parahoric subgroups in $G_2(F)$, as in Diagram~\ref{fig:beta-parahoric} where the blue nodes are the roots multiplied by $\p$ in the unipotent radical $G_{x+}$.
 
 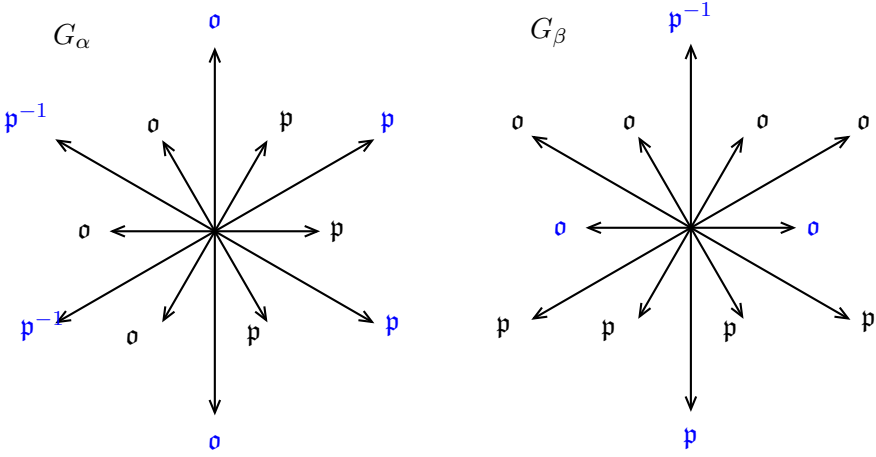
\begin{figure}
\begin{center} 
\begin{tikzpicture}
[ -{Straight Barb[bend,
       width=\the\dimexpr10\pgflinewidth\relax,
       length=\the\dimexpr12\pgflinewidth\relax]},]
    \foreach \i in {0, 1, ..., 5} {
      \draw[thick, black] (0, 0) -- (\i*60:2*0.7);
      \draw[thick, black] (0, 0) -- (30 + \i*60:3.5*0.7);
    }
    \node[right] at (2*0.7, 0) {$\p$};
    \node[above left] at (45:2.4*0.7) {$\p$};
    \node[above right] at (-48.5:-2.25*0.7) {$\cO$};
    \node[above right] at (51:-3*0.7) {$\cO$};
    \node[above right] at (-80:2.4*0.7) {$\p$};
    \node[right] at (-2.8*0.7, 0) {$\cO$};
    \node[blue] at (0,4*0.7) {$\cO$};
    \node[blue] at (0, -4*0.7) {$\cO$};
    \node[blue,above left, inner sep=.2em] at (5*30:3.5*0.7) {$\p^{-1}$};
    \node[blue,above left, inner sep=.2em] at (5*30:-4.2*0.7) {$\p$};
    \node[blue,above right, inner sep=.2em] at (-5*30:4.4*0.7) {$\p^{-1}$};
    \node[blue,above right, inner sep=.2em] at (-5*30:-3.5*0.7) {$\p$};
    \node[left] at (-60:-3) {$G_\alpha$};
     \end{tikzpicture}\hspace{1cm}
\begin{tikzpicture}
   [ -{Straight Barb[bend,
       width=\the\dimexpr10\pgflinewidth\relax,
       length=\the\dimexpr12\pgflinewidth\relax]},]
    \foreach \i in {0, 1, ..., 5} {
      \draw[thick, black] (0, 0) -- (\i*60:2*0.7);
      \draw[thick, black] (0, 0) -- (30 + \i*60:3.5*0.7);
    }
    \node[blue,right] at (2*0.7, 0) {$\cO$};
    \node[above left] at (45:2.4*0.7) {$\cO$};
    \node[above right] at (-48.5:-2.25*0.7) {$\cO$};
    \node[above right] at (51:-3*0.7) {$\p$};
    \node[above right] at (-80:2.4*0.7) {$\p$};
    \node[blue,right] at (-2.8*0.7, 0) {$\cO$};
    \node[blue] at (0,4*0.7) {$\p^{-1}$};
    \node[blue] at (0, -4*0.7) {$\p$};
    \node[above left, inner sep=.2em] at (5*30:3.5*0.7) {$\cO$};
    \node[above left, inner sep=.2em] at (5*30:-4.2*0.7) {$\p$};
    \node[above right, inner sep=.2em] at (-5*30:4.4*0.7) {$\p$};
    \node[above right, inner sep=.2em] at (-5*30:-3.5*0.7) {$\cO$};
    \node[left] at (-60:-3) {$G_\beta$};
     \end{tikzpicture}
\caption{The parahoric subgroups $G_\alpha$ and $G_\beta$}
\label{fig:beta-parahoric}
\end{center}
\end{figure}
 
Non-canonically (i.e., given a choice of uniformizer) there are isomorphisms $G_\alpha/G_{\alpha+}\cong\SL_3(\F_q)$ and $G_\beta/G_{\beta+}\cong\SO_4(\F_q)$,

More canonically, we can identify $G_\alpha/G_{\alpha+}$ the reductive quotient of the parahoric of $\SL_3$:
\begin{equation}
    H_\alpha:=\bigg\{g\in\begin{pmatrix}\cO&\cO&\p^{-1}\\\cO&\cO&\p^{-1}\\\p&\p&\cO\end{pmatrix}:\det g=1\bigg\}.
\end{equation}
Similarly,
\begin{equation}\label{h-beta-defn}
H_\beta:=\Big\{(g,h)\in\begin{pmatrix}\cO&\cO\\\cO&\cO\end{pmatrix}\times\begin{pmatrix}\cO&\p^{-1}\\\p&\cO\end{pmatrix}:\det(g)=\det(h)\Big\}/\cO_F^\times
\end{equation}
is a parahoric subgroup of $\SO_4(F)$, and there is a canonical isomorphism $H_\beta/H_{\beta+}\cong G_\beta/G_{\beta+}$ induced by the inclusion $\SO_4(F)\subset G_2(F)$.

\subsection{Refining Roche's isomorphism}\label{roche-iso-section}

Let $G$ be a connected split reductive group over $F$ with maximal torus $T$, and let $T_0\subset T$ be the maximal compact subgroup. Given a character $\chi\colon T_0\to\C^\times$, let $\chi^\vee\colon\cO_F^\times\to T^\vee(\C)$ be the dual, and let $H$ be a split reductive group over $F$ with maximal torus $T$ such that $H^\vee=\Cent_{G^\vee}(\mathrm{im}(\chi^\vee))$, where we assume $\Cent_{G^\vee}(\mathrm{im}(\chi^\vee))$ is connected.

Roche \cite[Thm~8.2]{Roche-principal-series} produces a support-preserving isomorphism $\mcH(G/\!/I,\chi)\cong \mcH(H/\!/J,1)$ where $I$ is an Iwahori subgroup of $G$ and $J$ is an Iwahori subgroup of $H$, but it is non-canonical. We make the isomorphism more canonical by slightly modifying the right-hand side:
\begin{proposition}\label{Roche-reformulation}
There is a unique support preserving isomorphism $\mcH(G/\!/I,\chi)\cong\mcH(H/\!/J,\chi)$ such that the following diagram commutes:
\[ \begin{tikzcd}
\mcH(T/\!/T_0,\chi) \arrow[equals]{r}{} \arrow[swap,hook]{d}{t_u} & \mcH(T/\!/T_0,\chi) \arrow[hook]{d}{t_u} \\
\mcH(G/\!/I,\chi)\arrow{r}{\sim}&\mcH(H/\!/J,\chi),
\end{tikzcd}
\]
where $t_u=t_{\delta_B^{-1/2}}$ is as in \cite[pg~399]{Roche-principal-series}.
\end{proposition}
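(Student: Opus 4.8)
The plan is to prove both existence and uniqueness through the Bernstein presentation, organizing everything around the grading by the length-zero part of the extended affine Weyl group.

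First I would record the structure of the two algebras. By hypothesis $H^\vee=\Cent_{G^\vee}(\mathrm{im}(\chi^\vee))$, so the coroots of $H$ are exactly the coroots $\alpha^\vee$ of $G$ along which $\chi$ is trivial; in particular $\chi$ is trivial on $\alpha^\vee(\cO_F^\times)$ for every root $\alpha$ of $H$, so $\mcH(H/\!/J,\chi)$ is an affine-Hecke-type algebra carrying a Bernstein presentation. Both $\mcH(G/\!/I,\chi)$ and $\mcH(H/\!/J,\chi)$ are free modules over the common commutative Bernstein subalgebra $\mathcal{A}:=t_u(\mcH(T/\!/T_0,\chi))\cong\C[X_*(T)]$, with standard (Iwahori--Matsumoto) bases $\{\phi_w\}$ indexed by the extended affine Weyl group $\widetilde W_H$ of $H$ (canonically identified, following Roche, with the subgroup $\widetilde W_\chi\subset\widetilde W_G$ stabilizing $\chi$), each $\phi_w$ supported on the single double coset indexed by $w$. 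The projection $\widetilde W_H\twoheadrightarrow\Omega_H:=X_*(T)/Q_H^\vee$ onto the length-zero quotient is a group homomorphism compatible with convolution, so both algebras are $\Omega_H$-graded, and the Bernstein element $\theta_\lambda$ (the image under $t_u$ of the generator attached to $\lambda\in X_*(T)$) is homogeneous of degree $\bar\lambda$, the image of $\lambda$ in $\Omega_H$.

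For uniqueness, suppose $\Phi_1,\Phi_2$ both satisfy the conclusion; then $\Psi:=\Phi_2\circ\Phi_1^{-1}$ is a support-preserving automorphism of $\mcH(H/\!/J,\chi)$ restricting to the identity on $\mathcal A$. Support-preservation forces $\Psi(\phi_w)=c_w\phi_w$ for scalars $c_w\in\C^\times$. Feeding this into the quadratic relation $\phi_s^2=(q_s-1)\phi_s+q_s\phi_e$ for each affine simple reflection $s$ (and using $\Psi(\phi_e)=\phi_e$) forces $c_s=1$; multiplicativity along length-additive products then shows that $c_w$ depends only on the image $\bar w\in\Omega_H$ and that $c\colon\Omega_H\to\C^\times$ is a character, so $\Psi$ acts on the degree-$\omega$ component by the scalar $c_\omega$. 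Since $\theta_\lambda$ has degree $\bar\lambda$ and $\Psi(\theta_\lambda)=\theta_\lambda$, we get $c_{\bar\lambda}=1$ for all $\lambda\in X_*(T)$; as $X_*(T)\twoheadrightarrow\Omega_H$ is surjective, $c\equiv 1$, whence $\Psi=\mathrm{id}$. For existence, I would start from Roche's support-preserving isomorphism $\mcH(G/\!/I,\chi)\cong\mcH(H/\!/J,1)$ and compose it with the twist isomorphism $\mcH(H/\!/J,1)\cong\mcH(H/\!/J,\chi)$ --- available precisely because $\chi$ is trivial on the coroots of $H$, so $\chi$ extends to a character of $J$ --- to obtain a support-preserving isomorphism $\Phi_0\colon\mcH(G/\!/I,\chi)\cong\mcH(H/\!/J,\chi)$. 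By the same computation as in the uniqueness step, $\Phi_0$ is graded and its restriction to $\mathcal A$ has the form $\theta_\lambda\mapsto c_{\bar\lambda}\theta_\lambda$ for a character $c$ of $\Omega_H$; composing $\Phi_0$ with the support-preserving automorphism attached to $c^{-1}$ produces $\Phi$ with $\Phi|_{\mathcal A}=\mathrm{id}$, making the square commute.

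The crux is the passage to $c_s=1$: it requires the quadratic parameters $q_s$ on the two sides to agree, which is exactly where Roche's matching of Hecke parameters and the triviality of $\chi$ on the $H$-coroots enter, and it is also where the normalization $t_u=t_{\delta_B^{-1/2}}$ is essential, since the half-sum-of-roots twist is what makes $t_u$ an algebra homomorphism and makes the vertical arrows literally equal on $\mcH(T/\!/T_0,\chi)$. A secondary point to handle carefully is the bookkeeping of $\Omega_H$ when $G$ is neither semisimple nor simply connected and when the stabilizer $W_\chi$ is disconnected (the $R$-group / $C_\chi$ factor in Roche's description), so as to be sure that the length-zero part is captured correctly and that the surjectivity $X_*(T)\twoheadrightarrow\Omega_H$ is applied in the right lattice.
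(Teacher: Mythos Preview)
Your argument is correct and arrives at the same conclusion, but the route is genuinely different from the paper's. The paper works with the covering group $\overline H\to H$ (where $\overline H^\vee=H^\vee/\Cent(H^\vee)$): Roche's Theorem~6.3 gives a \emph{canonical} injection $\mcH(\overline H/\!/\overline J,1)\hookrightarrow\mcH(G/\!/I,\chi)$, and the ambiguity in extending it to $\mcH(H/\!/J,\chi)$ is identified with characters of $T/\pi(\overline T)\cong H/\pi(\overline H)$; the fact that $\chi$ factors through $H^1_{\mathrm{gal}}(F,\Cent_{\overline H})$ is what lets one view $\chi$ as a character of $H$ and hence twist. By contrast you work entirely inside the affine Hecke algebra, using the $\Omega_H$-grading coming from $\widetilde W_H=W_H^{\mathrm{aff}}\rtimes\Omega_H$: the quadratic relations pin down the affine Coxeter part, leaving only a character of $\Omega_H$ as the freedom, which is then killed by the compatibility with $t_u$. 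Your approach is more self-contained (no covering groups or Galois cohomology) and makes the $\Omega_H$-indexed ambiguity completely explicit; the paper's approach is shorter and conceptually explains \emph{why} the twist by $\chi$ on the $H$-side is available, namely because $\chi$ genuinely extends to a character of $H(F)$.

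One small imprecision worth flagging: your justification of the twist isomorphism $\mcH(H/\!/J,1)\cong\mcH(H/\!/J,\chi)$ says it is ``available precisely because \dots\ $\chi$ extends to a character of $J$''. Extending $\chi$ to $J$ is automatic (trivially on the pro-unipotent radical) and by itself does not give an algebra isomorphism; what you need is that $\chi$ extends to a character of $H(F)$, or equivalently (and more in the spirit of your argument) that the Iwahori--Matsumoto structure constants of $\mcH(H/\!/J,\chi)$ coincide with those of $\mcH(H/\!/J,1)$, which follows from $\chi$ being trivial on the $H$-coroots --- one can also simply apply Roche's theorem once more with $G=H$. This does not affect the validity of the rest of your argument, since your subsequent correction by a character of $\Omega_H$ would absorb any such discrepancy anyway.
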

\begin{proof}
Let $\overline H^\vee:=H^\vee/\Cent(H^\vee)$, so we have a cover $\overline H\xrightarrow{\pi} H$. Let $\overline T^\vee:=T^\vee/\mathrm{im}(\chi^\vee)$ be a maximal torus of $\overline H^\vee$, which gives rise to a maximal torus $\overline T\subset \overline H$. For some finite discrete group $g$ we have the exact sequence of algebraic groups
\[
1\to \Cent_{\overline H}\to \overline T\xrightarrow{\pi} T\to 1
\]
where since $\mathrm{im}(\chi^\vee)\subset\Cent_{H^\vee}$ the composition $\pi^\vee\circ \chi^\vee\colon \cO_F^\times\to \overline T^\vee$ is trivial, we also have that $\chi\circ\pi=1$. Thus, $\chi$ factors through $H^1_{\mathrm{gal}}(F,\Cent_{\overline H})$, and so can be viewed as a character of $H$, since $H/\pi(\overline H)\cong H^1_{\mathrm{gal}}(F,\Cent_{\overline H})$.

By \cite[Thm~6.3]{Roche-principal-series} there is a unique support-preserving homomorphism $\mcH(\overline H/\!/\overline J,1)\hookrightarrow\mcH(G/\!/I,\chi)$, which extends\footnote{a priori the extension is non-canonical, but there is a unique choice making the diagram commute} to a support-preserving isomorphism $i\colon \mcH(H/\!/J,\chi)\xrightarrow\sim\mcH(G/\!/I,\chi)$. The restriction of $i$ to $\mcH(T/\!/T_0,\chi)$ is then trivial on $\mcH(\overline T/\!/\overline T_0,1)$, so it is given by twisting by a character of $T/\pi(\overline T)$. Since $T/\pi(\overline T)\cong H/\pi(\overline H)$ such twists extend to the entire Hecke algebra $\mcH(H/\!/J,\chi)$. Thus we have constructed an isomorphism $\mcH(G/\!/I,\chi)\cong\mcH(H/\!/J,\chi)$ satisfying the properties given. 

Uniqueness is a general observation on automorphisms of Iwahori Hecke algebras $\mcH(H/\!/J,1)$ being determined by its restriction to $\C[T/T_0]=\mcH(T/\!/T_0,1)$.
\end{proof}

\section{Size $2$ mixed packets}

Recall the size $2$ depth-zero mixed packets from \cite{AX-LLC}, where $\pi(\eta_2)$ is the principal series representation in Table 17 \textit{loc.cit.}. It is the unique (tempered) sub-representation of the parabolic induction $I_B^{G_2}(\eta_2\otimes\nu\eta_2)$, where $\eta_2$ is a ramified quadratic character of $F^\times$.

\subsection{Preliminaries on $\SO_4(F)$}\label{section-so4-prelim}

We let $\SO_4(F):=\{(g,h)\in\GL_2(F)\times\GL_2(F):\det(g)=\det(h)\}/F^\times$, where $F^\times$ is diagonally embedded as $\{(aI_2,aI_2):a\in F^\times\}$. It has a standard rank $2$ maximal torus $T:=\{(\diag(a_1,a_2),\diag(b_1,b_2)):a_1a_2=b_1b_2\}/F^\times$. Given characters $\chi_1,\chi_2,\varphi_1,\varphi_2$ of $F^\times$ such that $\chi_1\chi_2=\varphi_1\varphi_2$, we let $\chi_1\otimes\chi_2\otimes\varphi_1\otimes\varphi_2$ denote the character
\[
\chi_1\otimes\chi_2\otimes\varphi_1\otimes\varphi_2(\diag(a_1,a_2),\diag(b_1,b_2))=\chi_1(a_1)\chi_2(a_2)\varphi_1(b_1)\varphi_2(b_2).
\]
Note that for any character $\theta$ of $F^\times$, we have $\chi_1\otimes\chi_2\otimes\varphi_1\otimes\varphi_2=\theta\chi_1\otimes\theta\chi_2\otimes\theta\varphi_1\otimes\theta\varphi_2$.

By abuse of notation, let $\widetilde{\det}\colon\SO_4(F)\to F^\times/(F^\times)^2$ be defined by $\widetilde{\det}(g,h):=\det(g)=\det(h)$. Thus, for any order $2$ character $\eta$ of $F^\times$, we obtain a character $\eta\circ\widetilde{\det}$ of $\SO_4(F)$. 
The same conventions apply for $\SO_4(\cO_F)$ and $\SO_4(\F_q)$.

The generalized Springer correspondence for $\SO_4$ is given in Table~\ref{so4-springer} (see \cite[\S10.1, p.~166]{collingwood}), where $e$ denotes the regular unipotent of $\SL_2$, and $\cL$ denotes the unique nontrivial cuspidal local system on the orbit of $ee$. Let $\cG_{\sgn}$ denote the generalized Green function associated to the cuspidal local system $(ee,\cL)$, as in \cite[\S5.2.2]{DeBacker-Kazhdan-G2}.
{\begin{center}\begin{table}\begin{tabular}{ |c|c| } 
 \hline
 Unipotent pairs& Representations of $W\cong\mu_2^2$\\ \hline
 $(00,\C)$&$(1,1),1$\\
 $(0e,\C)$&$1\otimes\sgn$\\
 $(e0,\C)$&$\sgn\otimes1$\\
 $(ee,\C)$&$\sgn\otimes\sgn$\\
 $(ee,\cL)$&cuspidal\\
 \hline
\end{tabular}\caption{Springer Correspondence for $\SO_4(\C)$}\label{so4-springer}\end{table}\end{center}}

\subsection{Calculating parahoric invariants for $\pi(\eta_2)$}

\subsubsection{Calculating $\pi(\eta_2)^{G_{\beta+}}$}\label{eta2-parahoric-section}

By \cite[\S4.3]{bonnafe-book}, there are two reducible Deligne-Lusztig inductions of $\SL_2(\F_q)$: the principal series representations $R_\pm(\alpha_0)$ and the cuspidal representations $R_\pm'(\theta_0)$, where $\alpha_0$ and $\theta_0$ are the unique order $2$ character of $\F_q^\times$ and $\mu_{q+1}$, respectively (in \cite[\S2]{lusztig-book}, $R_{\pm}'(\theta_0)$ is denoted $H_\epsilon'$ and $H_\epsilon''$).

\begin{remark}\label{sl2-character}
\cite[Table~5.4]{bonnafe-book} gives the following, for $x\ne0\in\F_q$:
\begin{align}
    \tr(\begin{pmatrix}1&x\\&1\end{pmatrix},R_{\pm}(\alpha_0))&=\frac12(1\pm\epsilon(x)\sqrt{q^*})\\
    \tr(\begin{pmatrix}1&x\\&1\end{pmatrix},R_{\pm}'(\theta_0))&=\frac12(-1\pm\epsilon(x)\sqrt{q^*}),
\end{align}
where $q^*:=(-1)^{\frac{q-1}{2}}q\equiv1\pmod4$.
\end{remark}

\begin{definition}\label{omega2-defn}
Let $H_\beta$ be the parahoric defined in \eqref{h-beta-defn}, which contains the index $2$ subgroup
\begin{equation}
H_\beta^0:=\Big\{(g,h)\in\begin{pmatrix}\cO&\cO\\\cO&\cO\end{pmatrix}\times\begin{pmatrix}\cO&\p^{-1}\\\p&\cO\end{pmatrix}:\det(g)=\det(h)=1\Big\}/\pm1.
\end{equation}
For a ramified quadratic character $\eta_2$ of $F^\times$, let $\varpi\in F$ be a uniformizer such that $\eta_2(\varpi)=1$. We define the following irreducible representations of $G_\beta/G_{\beta+}\cong H_\beta/H_{\beta+}$:
\begin{align}
    \omega^{\eta_2}_{\mathrm{princ}}&:=\Ind_{G_\beta^0}^{G_\beta}(R_+(\alpha_0)\boxtimes R_+(\alpha_0)^{\mathrm{diag}(\varpi,1)})\label{pi-princ+-defn} 
    \\
    \omega^{\eta_2}_{\mathrm{cusp}}&:=\Ind_{G_\beta^0}^{G_\beta}(R_+'(\theta_0)\boxtimes R_+'(\theta_0)^{\mathrm{diag}(\varpi,1)})
\end{align}
This is independent of the choice of the uniformizer $\varpi$.
\end{definition}

\begin{remark}\label{omega-eta2-characterization}
The representation $\omega_\princ^{\eta_2}$ is an irreducible constituent of the length two representation $R_T^{\SO_4}(\epsilon\circ\widetilde\det)$, for $T\subset \SO_4$ a split torus. Similarly $\omega_\cusp^{\eta_2}$ is an irreducible constituent of the length two representation $R_{T'}^{\SO_4}(\epsilon\circ\widetilde\det)$, where $T'\subset\SO_4$ is a maximal anistropic torus. 
There are multiple ways to characterize the representations $\omega_\princ^{\eta_2}$ and $\omega_\cusp^{\eta_2}$ in the Deligne-Lusztig inductions:
\begin{enumerate}
    \item By Remark~\ref{sl2-character}, for a regular unipotent $u=(\begin{pmatrix}1&x\\&1\end{pmatrix},\begin{pmatrix}1&y\\&1\end{pmatrix})\in H_\beta$ with $x\in\cO\backslash\p$ and $y\in\p^{-1}\backslash\cO$, we have
\begin{equation}
    \tr(u,\omega_\princ^{\eta_2})=\tr(u,\omega_\cusp^{\eta_2})=\frac12(1+\eta_2(xy)q^*).
\end{equation}
\item By \cite[pg~55]{bonnafe-book}, they are characterized as irreducible components of the Gelfand-Graev representation $\Gamma_{\beta,\Oo}$ (notation as in \cite[Thm~4.5]{Barbasch-Moy-LCE}) associated to the nilpotent orbit $\Oo=\Oo_1^+$ (notation as in \cite[\S7.1]{DeBacker-Kazhdan-G2}).
\end{enumerate}
\end{remark}

We use the following Hecke algebra isomorphism from \cite{aubert-xu-Hecke-algebra,AX-LLC,Roche-principal-series}: consider two copies of $\SO_4(F)$ which are Weyl group conjugates to each other. Let $\SO_4^{(1)}$ have roots $\pm\alpha,\pm(3\alpha+2\beta)$, and let $\SO_4^{(2)}$ have roots $\pm(\alpha+\beta),\pm(3\alpha+\beta)$. The following is a corollary of Proposition \ref{Roche-reformulation}. 

\begin{corollary}\label{quadratic-hecke-iso}
Let $I$ be the standard Iwahori of $G_2$. 
There exist canonical support-preserving isomorphisms of Hecke algebras
\begin{align}\label{quadratic-hecke-iso1}
\mcH(G_2/\!/I,\epsilon\otimes\epsilon)&\cong\mcH(\SO_4^{(1)}/\!/J^{(1)},\epsilon\circ\widetilde{\det})\\
\mcH(G_2/\!/I,\epsilon\otimes1)&\cong\mcH(\SO_4^{(2)}/\!/J^{(2)},\epsilon\circ\widetilde{\det}),\label{quadratic-hecke-iso2}
\end{align}
under which the representation $\pi(\eta_2)$ corresponds to the representation $\eta_2\St_{\SO_4}$, where $J^{(i)}:=I\cap\SO_4^{(i)}$ is an Iwahori subgroup of $\SO_4^{(i)}(F)$. The isomorphisms are characterized by the following commutative diagrams
\begin{equation}
 \begin{tikzcd}
\mcH(T/\!/T_0,\epsilon\otimes\epsilon) \arrow[equal]{r} \arrow[hook]{d}{t_u} & \mcH(T/\!/T_0,\epsilon\circ \widetilde{\det}) \arrow[hook]{d}{t_u}\\
\mcH(G_2/\!/I,\epsilon\otimes\epsilon)\arrow{r}{\sim}&\mcH(\SO_4^{(1)}/\!/J^{(1)},\epsilon\circ \widetilde{\det}),
\end{tikzcd}
\end{equation}
\begin{equation}
\begin{tikzcd}
\mcH(T/\!/T_0,\epsilon\otimes1) \arrow[equal]{r} \arrow[hook]{d}{t_u} & \mcH(T/\!/T_0,\epsilon\circ\widetilde{\det}) \arrow[hook]{d}{t_u}\\
\mcH(G_2/\!/I,\epsilon\otimes1)\arrow{r}{\sim}&\mcH(\SO_4^{(2)}/\!/J^{(2)},\epsilon\circ\widetilde{\det}),
\end{tikzcd}
\end{equation}
where $t_u=t_{\delta_B^{-1/2}}$ is as in \cite[pg~399]{Roche-principal-series}.
\end{corollary}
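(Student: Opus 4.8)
The plan is to deduce both isomorphisms directly from Proposition~\ref{Roche-reformulation} by taking $G=G_2$ and $\chi=\epsilon\otimes\epsilon$ (resp. $\chi=\epsilon\otimes1$), so that the only real content is to identify the group $H$ produced by the proposition with $\SO_4^{(1)}$ (resp. $\SO_4^{(2)}$) and then to track $\pi(\eta_2)$ through the resulting equivalence. Since $G_2$ is self-dual, $G_2^\vee=G_2(\C)$; I would first compute the dual $\chi^\vee\colon\cO_F^\times\to T^\vee(\C)$ and observe that, because $\epsilon$ is the tame quadratic character of $\cO_F^\times$, its image $\mathrm{im}(\chi^\vee)$ is generated by a single involution $s\in T^\vee$, whose coordinates depend on whether $\epsilon$ occurs in both factors or only the first.

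Next I would compute $\Cent_{G_2^\vee}(s)$ explicitly. The centralizer of an involution in $G_2(\C)$ is $(\SL_2\times\SL_2)/\langle(-1,-1)\rangle\cong\SO_4(\C)$, which is connected, so the hypothesis of Proposition~\ref{Roche-reformulation} is satisfied; the two choices of $s$ give the two $G_2$-conjugate copies whose root systems are $\{\pm\alpha,\pm(3\alpha+2\beta)\}$ and $\{\pm(\alpha+\beta),\pm(3\alpha+\beta)\}$, matching the definitions of $\SO_4^{(1)}$ and $\SO_4^{(2)}$. Running the proof of Proposition~\ref{Roche-reformulation} with $H=\SO_4^{(i)}$ then yields the isomorphism together with the commuting square involving $t_u=t_{\delta_B^{-1/2}}$, once we check that $J^{(i)}=I\cap\SO_4^{(i)}$ is the Iwahori subgroup playing the role of $J$. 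The remaining point is to verify that the character of $H$ produced in that proof---obtained by factoring $\chi$ through $H/\pi(\overline H)\cong H^1_{\mathrm{gal}}(F,\mu_2)=F^\times/(F^\times)^2$---is exactly $\epsilon\circ\widetilde{\det}$; this is a direct comparison, since $\widetilde{\det}$ realizes the map $\SO_4\to F^\times/(F^\times)^2$ and on the shared torus $T$ one computes $\epsilon\circ\widetilde{\det}(\diag(a_1,a_2),\diag(b_1,b_2))=\epsilon(a_1)\epsilon(a_2)$, which agrees with $\chi$ under the identification of tori determined by the root matching above.

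Finally, to see that $\pi(\eta_2)$ corresponds to $\eta_2\St_{\SO_4}$, I would use that $\eta_2|_{\cO_F^\times}=\epsilon$, so $\pi(\eta_2)$ has nonzero $(I,\epsilon\otimes\epsilon)$-isotypic vectors and hence defines a simple module over $\mcH(G_2/\!/I,\epsilon\otimes\epsilon)$; likewise $\eta_2\St_{\SO_4}$ has nonzero $(J^{(i)},\epsilon\circ\widetilde{\det})$-isotypic vectors. The commuting square with $t_{\delta_B^{-1/2}}$ matches the module attached to $I_B^{G_2}(\eta_2\otimes\nu\eta_2)$ with the module attached to the corresponding full principal series of $\SO_4^{(i)}$, and because the isomorphism is support-preserving and compatible with $t_{\delta_B^{-1/2}}$ it preserves temperedness; since $\pi(\eta_2)$ is the unique tempered subrepresentation on the $G_2$ side and $\eta_2\St_{\SO_4}$ is the unique tempered constituent on the $\SO_4$ side, they must correspond.

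I expect the main obstacle to be the explicit centralizer computation of the second paragraph---pinning down the two involutions $s$ and verifying that their centralizers carry precisely the root systems $\{\pm\alpha,\pm(3\alpha+2\beta)\}$ and $\{\pm(\alpha+\beta),\pm(3\alpha+\beta)\}$---together with its compatibility with the character comparison $\chi\leftrightarrow\epsilon\circ\widetilde{\det}$, since an error in the identification of tori would send $\chi$ to the wrong extension to $\SO_4^{(i)}$.
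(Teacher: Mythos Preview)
Your proposal is correct and follows the paper's overall strategy of deducing the isomorphisms from Proposition~\ref{Roche-reformulation}; the centralizer computation and the identification of the character on $H$ with $\epsilon\circ\widetilde{\det}$ are exactly what is needed, and the paper takes these for granted (citing Roche) rather than spelling them out.

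Where you diverge from the paper is in the final step, identifying $\pi(\eta_2)$ with $\eta_2\St_{\SO_4}$. You argue by preservation of temperedness: match $I_B^{G_2}(\eta_2\otimes\nu\eta_2)$ with ``the corresponding full principal series'' of $\SO_4^{(i)}$, then pick out the unique tempered constituent on each side. The paper instead computes Jacquet modules directly: it takes $\jacquet_\emptyset\pi(\eta_2)=\nu\eta_2\otimes\eta_2+\nu\otimes\eta_2+\eta_2\otimes\nu$ from \cite{AX-LLC} and \cite{Muic-G2}, extracts the $\epsilon\otimes\epsilon$-isotypic piece $\nu\eta_2\otimes\eta_2$, and matches it via \cite[Thm~9.2]{Roche-principal-series} with $\jacquet_\emptyset(\eta_2\St_{\SO_4^{(i)}})=\nu^{-1/2}\eta_2\otimes\nu^{1/2}\eta_2\otimes\nu^{-1/2}\otimes\nu^{1/2}$ under the canonical identification of maximal tori. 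Your temperedness shortcut is valid, but note that to rule out $\eta_2'\St_{\SO_4}$ for the other ramified quadratic $\eta_2'$ you still have to determine \emph{which} $\SO_4$-principal series sits under $I_B^{G_2}(\eta_2\otimes\nu\eta_2)$, and that is precisely the torus-level character comparison the paper makes explicit. The paper's route has the advantage of not invoking temperedness preservation as a separate ingredient; yours is conceptually cleaner once that preservation is granted.
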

\begin{proof}
For brevity we write down the proof for the first isomorphism; the proof for the second isomorphism is entirely analogous. By \cite[Thm~6.3 and Thm~8.2]{Roche-principal-series}, there is a canonical injection
\[
\mcH(\SL_2\times\SL_2(F)/\!/J,1)\hookrightarrow\mcH(G_2/\!/I,\epsilon\otimes\epsilon)
\]
which extends (a priori) non-canonically to an isomorphism $\mcH(\SO_4(F)/\!/J,1)\cong\mcH(G_2/\!/I,\epsilon\otimes\epsilon)$. There is, however, a unique extension to $\mcH(\SO_4(F)/\!/J,1)$ which makes $\pi(\eta_2)$ correspond to $\eta_2\St_{\SO_4}$ as in Proposition \ref{Roche-reformulation}. 

The commutative diagrams follow from looking at the Jacuqet modules: the representation $\pi(\eta_2)$ is identified with a homomorphism $\mcH(G_2/\!/I,\epsilon\otimes\epsilon)\to\C$, and the (normalized) Jacquet restriction $\jacquet_\emptyset\pi(\eta_2)=\nu\eta_2\otimes\eta_2+\nu\otimes\eta_2+\eta_2\otimes\nu$ by \cite[\S 9]{AX-LLC} (see also \cite[Prop~4.1]{Muic-G2}). By \cite[Thm~9.2]{Roche-principal-series}, the restriction of the homomorphism to $\mcH(T/\!/T_0,\epsilon\otimes\epsilon\otimes1\otimes1)$ corresponds to the $\epsilon\otimes\epsilon$-isotypic component $\nu\eta_2\otimes\eta_2$.

Analogously, the (un-normalized) Jacquet restriction of $\eta_2\St_{\SO_4^{(i)}}$ is $\jacquet_\emptyset(\eta_2\St_{\SO_4^{(i)}})=\nu^{-1/2}\eta_2\otimes\nu^{1/2}\eta_2\otimes\nu^{-1/2}\otimes\nu^{1/2}$. These two characters are equal as the maximal torus of $G_2$ and the maximal torus of $\SO_4^{(i)}$ are canonically identified.
\end{proof}

By the Mackey formula, we have an isomorphism of 
representations of $G_\beta/G_{\beta+}\cong\SO_4(\F_q)$,
\begin{equation}\label{mackey-eta2-beta}
I_B^{G_2}(\nu\eta_2\otimes\eta_2)^{G_{\beta+}}\cong\bigoplus_{w\in B\backslash G_2/G_\beta}\Ind_{G_\beta\cap wBw^{-1}/(G_{\beta+}\cap wBw^{-1})}^{G_\beta/G_{\beta+}}(\epsilon\otimes\epsilon)^w,
\end{equation}
where
\begin{equation}
B\backslash G_2/G_\beta\cong W(G_2)/W(\SO_4)=W/\langle s_\alpha,s_{3\alpha+\beta}\rangle=\{1,s_\beta,s_{3\alpha+\beta}\}.
\end{equation}
The intersections $G_\beta\cap wBw^{-1}$ are shown in the following diagram~\ref{fig:beta-parahoric}, where the blue nodes correspond to the reductive quotient of the parahoric. (Note that in $G_{\beta+}$, the blue nodes are multiplied by $\p$.) 
Therefore, the $G_{\beta+}$-invariants of $I_B(\nu\eta_2\otimes\eta_2)^{G_{\beta+}}$ gives
\begin{equation}
    I_B^{G_2}(\nu\eta_2\otimes\eta_2)^{G_{\beta+}}\simeq \Ind_B^{\SO_4}(\epsilon\otimes\epsilon\otimes 1\otimes 1)+\Ind_B^{\SO_4}(\epsilon\otimes1\otimes\epsilon\otimes 1)^2
\end{equation}
Analogously, computing the $G_{\beta+}$-invariants of $I_{\alpha}$ (resp.~$I_{\beta}$) from \cite[\S 9]{AX-LLC} gives us the following
\begin{align}
    I_{\alpha}(\nu^{1/2}\eta_2\St)^{G_{\beta+}}&\simeq \Ind_P^{\SO_4}(\epsilon\St)+\Ind_B^{\SO_4}(\epsilon\otimes1\otimes\epsilon\otimes 1)\\
    I_{\beta}(\nu^{1/2}\eta_2\St)^{G_{\beta+}}&\simeq \Ind_P^{\SO_4}(\epsilon\St)+ \Ind_B^{\SO_4}(\epsilon\otimes1\otimes\epsilon\otimes 1)
\end{align}
We pin down the $G_{\beta+}$-invariance of $\pi(\eta_2)$ in Corollary~\ref{eta2-beta-invariants}.

\begin{proposition}\label{pro-iwahori-invariants}
The $I_{+}$-invariants of $\pi(\eta_2)$ is
\[
\pi(\eta_2)^{I_+}\cong\epsilon\otimes\epsilon+1\otimes\epsilon+\epsilon\otimes1.
\]
\end{proposition}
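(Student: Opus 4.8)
The plan is to compute $\pi(\eta_2)^{I_+}$ by identifying it, as a representation of $I/I_+\cong T(\F_q)$, with the restriction to $T_0$ of the Jacquet module of $\pi(\eta_2)$, which is already available. Here $I_+$ denotes the pro-unipotent radical of the Iwahori $I$, so that $I/I_+\cong T_0/T_{0+}\cong T(\F_q)=(\F_q^\times)^2$ and $\pi(\eta_2)^{I_+}$ is a module over this finite torus, decomposing as a multiset of characters; the objects $\epsilon\otimes\epsilon$, $1\otimes\epsilon$, $\epsilon\otimes1$ in the statement are precisely such characters, with $\epsilon$ the order-two character of $\F_q^\times$. The one structural input I would use is Casselman's comparison between pro-unipotent radical invariants and the Jacquet module: for an admissible depth-zero representation, the canonical projection $\pi\to\jacquet_\emptyset\pi$ induces an equality of $T(\F_q)$-characters between $\pi^{I_+}$ and the restriction to $T_0$ of $\jacquet_\emptyset\pi$ (whose constituents are depth-zero characters of $T$, hence factor through $T(\F_q)$).

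With this in hand the computation is immediate. The Jacquet module was recorded in the proof of Corollary~\ref{quadratic-hecke-iso} (from \cite[\S9]{AX-LLC}, cf.\ \cite{Muic-G2}): $\jacquet_\emptyset\pi(\eta_2)=\nu\eta_2\otimes\eta_2+\nu\otimes\eta_2+\eta_2\otimes\nu$. Restricting each summand to $T_0=(\cO_F^\times)^2$, the unramified character $\nu$ is trivial on $T_0$ while $\eta_2|_{\cO_F^\times}=\epsilon$ because $\eta_2$ is ramified quadratic; hence the three summands restrict to $\epsilon\otimes\epsilon$, $1\otimes\epsilon$, and $\epsilon\otimes1$ respectively. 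These are three distinct characters of $T(\F_q)$, each occurring once, so the comparison yields $\pi(\eta_2)^{I_+}\cong\epsilon\otimes\epsilon+1\otimes\epsilon+\epsilon\otimes1$.

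The step I expect to require the most care is pinning down the precise form of the comparison isomorphism rather than any calculation. Normalized versus unnormalized Jacquet restriction is harmless, since the normalizing character $\delta_B^{1/2}$ is unramified and so trivial on $T_0$; similarly any longest-element twist or contragredient implicit in Casselman's map only permutes the three nonzero quadratic characters $\{\epsilon\otimes\epsilon,1\otimes\epsilon,\epsilon\otimes1\}$ among themselves, this set being stable under $W(G_2)$, so the answer is unchanged. As an independent check one can argue through Corollary~\ref{quadratic-hecke-iso}: $\pi(\eta_2)$ corresponds to $\eta_2\St_{\SO_4^{(i)}}$ under the two Hecke-algebra isomorphisms, and since $\dim\St_{\SO_4}^{J}=1$ the $\epsilon\otimes\epsilon$- and $\epsilon\otimes1$-isotypic parts are each one-dimensional; the remaining $1\otimes\epsilon$ part is obtained from the third $W(G_2)$-conjugate copy of $\SO_4$ (whose roots are $\equiv\beta\bmod2$), while all other $T(\F_q)$-isotypic parts vanish because $\pi(\eta_2)\hookrightarrow I_B^{G_2}(\nu\eta_2\otimes\eta_2)$, whose $I_+$-invariants are supported on the single $W$-orbit $\{\epsilon\otimes\epsilon,1\otimes\epsilon,\epsilon\otimes1\}$.
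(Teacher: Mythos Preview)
Your proposal is correct, and your ``independent check'' in the last paragraph is essentially the paper's own proof: embed $\pi(\eta_2)^{I_+}$ into $I_B^{G_2}(\nu\eta_2\otimes\eta_2)^{I_+}=(\epsilon\otimes\epsilon)^4+(1\otimes\epsilon)^4+(\epsilon\otimes1)^4$, compute the multiplicity of $\epsilon\otimes\epsilon$ to be one via Corollary~\ref{quadratic-hecke-iso} (since $\dim(\eta_2\St_{\SO_4})^{J}=1$), and then transport this to $1\otimes\epsilon$ and $\epsilon\otimes1$ by $W(G_2)$-conjugacy. Your \emph{primary} argument, reading off $\pi(\eta_2)^{I_+}$ directly from $\jacquet_\emptyset\pi(\eta_2)|_{T_0}$, is a genuinely different and more direct route: it bypasses the Hecke-algebra isomorphism entirely and uses only the already-computed Jacquet module. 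The trade-off is that the paper's argument is self-contained given Corollary~\ref{quadratic-hecke-iso}, whereas your approach imports the $I_+$-invariants/Jacquet-module comparison as a structural input.

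One small point worth tightening: the comparison you call ``Casselman's'' is, in its classical form, about $I$-fixed vectors rather than $I_+$-fixed vectors with a character. The version you need (for each character $\chi$ of $T(\F_q)$, the $\chi$-isotypic part of $\pi^{I_+}$ matches the $\chi$-isotypic part of $\jacquet_\emptyset\pi$ on $T_0$) is the depth-zero principal series case of the types-and-covers formalism; a clean reference is precisely \cite[Thm~9.2]{Roche-principal-series}, which the paper already invokes inside the proof of Corollary~\ref{quadratic-hecke-iso}. You correctly note that the residual ambiguities (normalized versus unnormalized Jacquet functor, opposite-Borel or longest-element twists) only permute $\{\epsilon\otimes\epsilon,1\otimes\epsilon,\epsilon\otimes1\}$ among themselves, so the conclusion is insensitive to them.
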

\begin{proof}
A priori we know that
\[\pi(\eta_2)^{I_+}\hookrightarrow I(\nu\eta_2\otimes\eta_2)^{I_+}=\bigoplus_{w\in W}(\epsilon\otimes\epsilon)^w=(\epsilon\otimes\epsilon)^4+(1\otimes\epsilon)^4+(\epsilon\otimes1)^4.\]
By Lemma~\ref{quadratic-hecke-iso}, the multiplicity of $\epsilon\otimes\epsilon$ in $\pi(\eta_2)$, which is the same as the multiplicity of $\epsilon\otimes\epsilon\otimes1\otimes1$ in the representation $\eta_2\St_{\SO_4}$, is one. Thus the same holds for all of the Weyl group orbits of the character.
\end{proof}

\begin{corollary}\label{eta2-beta-invariants}
There is an isomorphism of $G_\beta/G_{\beta+}$-representations
\[
\pi(\eta_2)^{G_{\beta+}}\cong\epsilon\St_{G_\beta/G_{\beta+}}\oplus\omega_{\princ}^{\eta_2}
\]
\end{corollary}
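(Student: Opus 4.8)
The plan is to realize $\pi(\eta_2)^{G_{\beta+}}$ as a subrepresentation of the already-decomposed $G_\beta/G_{\beta+}$-module $I_B^{G_2}(\nu\eta_2\otimes\eta_2)^{G_{\beta+}}$ and then to pin down its constituents using the Iwahori-level data of Proposition~\ref{pro-iwahori-invariants}. Since $G_{\beta+}$ is a compact pro-$p$ subgroup, normal in $G_\beta$, the functor of $G_{\beta+}$-invariants is exact on smooth $G_2$-representations over $\C$, so the embedding $\pi(\eta_2)\hookrightarrow I_B^{G_2}(\nu\eta_2\otimes\eta_2)$ induces an embedding of $\SO_4(\F_q)$-representations $\pi(\eta_2)^{G_{\beta+}}\hookrightarrow I_B^{G_2}(\nu\eta_2\otimes\eta_2)^{G_{\beta+}}\simeq\Ind_B^{\SO_4}(\epsilon\otimes\epsilon\otimes1\otimes1)\oplus\Ind_B^{\SO_4}(\epsilon\otimes1\otimes\epsilon\otimes1)^{\oplus2}$, using the computation of~\S\ref{eta2-parahoric-section}. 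In particular every constituent of $\pi(\eta_2)^{G_{\beta+}}$ is a constituent of a Harish-Chandra induction from the Borel, hence a principal-series representation of $\SO_4(\F_q)$; this immediately rules out the cuspidal $\omega_\cusp^{\eta_2}$ and narrows the search to principal-series irreducibles.

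Next I would decompose the two ambient inductions into irreducible $\SO_4(\F_q)$-representations. Using the $\SL_2\times\SL_2$-structure of $\SO_4$ together with the reducibility $\Ind_B^{\SL_2}(\alpha_0)=R_+(\alpha_0)\oplus R_-(\alpha_0)$ and $\Ind_B^{\SL_2}(1)=\mathbf{1}\oplus\St_{\SL_2}$, each induction splits into irreducible pieces. Within this list I would locate the two target constituents: $\epsilon\St_{\SO_4}$ as the Steinberg twist sitting inside $\Ind_B^{\SO_4}(\epsilon\circ\widetilde{\det})$, and $\omega_\princ^{\eta_2}$ via its characterization in Remark~\ref{omega-eta2-characterization} as the length-two constituent of $R_T^{\SO_4}(\epsilon\circ\widetilde{\det})$ whose regular-unipotent character value is $\tfrac12(1+\eta_2(xy)q^*)$ (Remark~\ref{sl2-character}).

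The selection among these constituents is then forced by the $U$-invariants. Because $G_{\beta+}\subset I_+$ with $I_+/G_{\beta+}$ equal to the unipotent radical $U$ of the Borel of $\SO_4(\F_q)$, taking $U$-invariants gives $(\pi(\eta_2)^{G_{\beta+}})^{U}=\pi(\eta_2)^{I_+}$, which by Proposition~\ref{pro-iwahori-invariants} is the three-dimensional $T(\F_q)$-module $\epsilon\otimes\epsilon+1\otimes\epsilon+\epsilon\otimes1$. Computing the $U$-invariants of the candidate irreducibles and matching $T$-characters then identifies $\epsilon\St_{\SO_4}\oplus\omega_\princ^{\eta_2}$ as the unique subrepresentation of the ambient realizing exactly these three characters, each with multiplicity one: the Steinberg twist $\epsilon\St_{\SO_4}$ has a one-dimensional $U$-fixed line on which $T$ acts by $\epsilon\circ\widetilde{\det}$ (matching $\epsilon\otimes\epsilon$ under the torus identification of Corollary~\ref{quadratic-hecke-iso}), and $\omega_\princ^{\eta_2}$ must carry the remaining two-dimensional $U$-invariants $1\otimes\epsilon+\epsilon\otimes1$. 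This yields the claimed isomorphism.

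The main obstacle I anticipate is the finite-group bookkeeping in the middle step: one must decompose the Harish-Chandra inductions of $\SO_4(\F_q)$ precisely, track the identification between the $\SO_4^{(1)}$-torus and the $G_2$-torus (so that the three $T(\F_q)$-characters $\epsilon\otimes\epsilon$, $1\otimes\epsilon$, $\epsilon\otimes1$ are correctly assigned), and compute the $T$-module structure on the $U$-invariants of each constituent finely enough to distinguish $\omega_\princ^{\eta_2}$ from the other constituents of the same principal series and to be sure no piece is selected with the wrong multiplicity. An alternative that sidesteps part of this is to transport the question through the Hecke-algebra isomorphism of Corollary~\ref{quadratic-hecke-iso} and compute instead the parahoric invariants of the $p$-adic Steinberg $\eta_2\St_{\SO_4}$, but then one has to control how the parahoric $G_\beta$ transports across the isomorphism, which carries its own difficulties.
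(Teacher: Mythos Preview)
Your plan coincides with the paper's argument up to the point where the $U$-invariants (with $U=I_+/G_{\beta+}$) narrow the possibilities to $\epsilon\St_{\SO_4}$ plus a single constituent of the reducible principal series $\Ind_B^{\SO_4}(\epsilon\circ\widetilde{\det})$. The gap is in your final step: the $T(\F_q)$-module structure on the $U$-invariants \emph{cannot} distinguish $\omega_\princ^{\eta_2}$ from $\omega_\princ^{\eta_2'}$. Both irreducible constituents of that length-two principal series have the same two-dimensional $U$-fixed space, with the same $T$-characters (the paper records this explicitly: $(\omega')^N\cong(\omega'')^N\cong\epsilon\otimes1\otimes\epsilon\otimes1+\epsilon\otimes1\otimes1\otimes\epsilon$). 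They are only separated by data sensitive to the nontrivial $G_\beta$-orbit on regular unipotents, e.g.\ the character value in Remark~\ref{omega-eta2-characterization}, and your $U$-invariants argument never accesses that. So after your argument you only know $\pi(\eta_2)^{G_{\beta+}}\cong\epsilon\St_{\SO_4}\oplus\omega$ with $\omega\in\{\omega_\princ^{\eta_2},\omega_\princ^{\eta_2'}\}$.

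The paper closes this gap precisely by the ``alternative'' you set aside. It shows that the support-preserving isomorphism of Corollary~\ref{quadratic-hecke-iso} restricts to an isomorphism $\mcH(G_\beta/\!/I,\epsilon\otimes1)\xrightarrow{\sim}\mcH(\widetilde{\mathcal J}/\!/\mathcal J,\epsilon)$, where $\widetilde{\mathcal J}$ is the alcove stabilizer in $\SO_4(F)$; both sides are two-dimensional, and the containment $\widetilde{\mathcal J}\subset G_\beta\cap\SO_4(F)$ is immediate from the building. Under this restricted isomorphism the module $\eta_2\St_{\SO_4}$ becomes the character $\eta_2\circ\det$ of $\widetilde{\mathcal J}$, which by Definition~\ref{omega2-defn} corresponds to $\omega_\princ^{\eta_2}$ rather than $\omega_\princ^{\eta_2'}$. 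This Hecke-algebra step is not optional bookkeeping but the essential input that selects the correct constituent; your proposal should incorporate it rather than treat it as an alternative route.
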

\begin{proof}
Let $N=I_+/G_{\beta+}\subseteq G_\beta/G_{\beta+}$ be a maximal unipotent subgroup of $\SO_4(\F_q)$. Let $\omega'$ and $\omega''$ be the irreducible constituents of $\Ind_B^{\SO_4}(1\otimes\epsilon\otimes1\otimes\epsilon)$. 
By Proposition~\ref{pro-iwahori-invariants}, the $\SO_4(\F_q)$-representation $\pi(\eta_2)^{G_{\beta+}}$ has $N$-invariants $\epsilon\otimes\epsilon\otimes1\otimes1+\epsilon\otimes1\otimes\epsilon\otimes1+\epsilon\otimes1\otimes1\otimes\epsilon$. Thus
\begin{align}
\pi(\eta_2)^{G_{\beta+}}&= I_\alpha(\nu^{1/2}\eta_2\St)^{G_{\beta+}}\cap I_\beta(\nu^{1/2}\eta_2\St)^{G_{\beta+}}\\
&\subseteq\epsilon\St_{\SO_4}+\omega'+\omega''
\end{align}
must contain either just $\omega'$ or $\omega''$ (but not both), since
\[
(\omega')^N,(\omega'')^N\cong\epsilon\otimes1\otimes\epsilon\otimes1+\epsilon\otimes1\otimes1\otimes\epsilon.
\]
Thus either $\pi(\eta_2)=\epsilon\St_{\SO_4}+\omega'$ or $\pi(\eta_2)=\epsilon\St_{\SO_4}+\omega''$ as abstract representations of $\SO_4(\F_q)$. 

To further pin down the choice, let $\widetilde{\mathcal{J}}:=\mathcal{J}\rtimes \langle \begin{pmatrix}
    &1\\
    \varpi &
\end{pmatrix}\begin{pmatrix}
    &1\\
    \varpi &
\end{pmatrix}\rangle$ be the stabilizer of an alcove in the Bruhat-Tits building of $\SO_4(F)$. Then we have the 
following commutative diagram involving the support-preserving isomorphism of Lemma~\ref{quadratic-hecke-iso}:
\begin{equation}\label{Roche-square-diagram-SO4}
    \begin{tikzcd}
        \mathcal{H}(G_2/\!/\mathcal{I},\epsilon\otimes 1)\arrow[]{r}{\sim}&\mathcal{H}(\SO_4/\!/ \mathcal{J},\epsilon)\\
        \mathcal{H}(G_{\beta}/\!/\mathcal{I},\epsilon \otimes 1)\arrow[hook]{u}{}\arrow[]{r}{\sim}&\mathcal{H}(\widetilde{\mathcal{J}}/\!/\mathcal{J},\epsilon)\arrow[hook]{u}{}
    \end{tikzcd}
\end{equation}
Indeed, since \eqref{quadratic-hecke-iso1} is support-preserving, the image of $\mcH(G_\beta/\!/\mathcal I,\epsilon\otimes1)$ under the isomorphism consists of functions supported on $G_\beta\cap\SO_4(F)$. Certainly $\tilde{\mathcal J}\subset G_\beta\cap\SO_4(F)$, since elements of $\tilde{\mathcal J}$, which fixes an alcove of $\SO_4(F)$, must also fix the vertex $\beta$ in the building of $G_2$. Equality follows from observing that both $\mcH(G_\beta/\!/\mathcal I,\epsilon\otimes1)$ and $\mcH(\tilde{\mathcal J}/\!/\mathcal J,\epsilon)$ have dimension $2$. 
By the characterization in Lemma~\ref{quadratic-hecke-iso}, the restriction of $\eta_2\St_{\GL_2}$ to $\mcH(\tilde{\mathcal J}/\!/\mathcal J,\epsilon)$ is the representation $\eta_2\circ\det$ on $\tilde{\mathcal J}$. Via the bottom isomorphism, $\eta_2\circ\det$ corresponds to the representation $\omega_\princ^{\eta_2}$ of $G_\beta$. 

Thus, we conclude that $\omega_\princ^{\eta_2}$ is a constituent of $\pi(\eta_2)^{G_{\beta+}}$.
\end{proof}

\subsubsection{Calculating $\pi(\eta_2)^{G_{\alpha+}}$}

Analogous to \eqref{mackey-eta2-beta}, we have
\begin{align}\label{mackey-eta2-alpha}
\begin{split}
I_B^{G_2}(\nu\eta_2\otimes\eta_2)^{G_{\alpha+}}&\cong\bigoplus_{w\in W/W(\SL_3)}\Ind_{G_\alpha\cap wBw^{-1}/(G_{\alpha+}\cap wBw^{-1})}^{G_\alpha/G_{\alpha+}}(\epsilon\otimes\epsilon)^w\\
&=\Ind_B^{\SL_3}(\epsilon)^2.
\end{split}
\end{align}
Moreover, we have isomorphisms
\begin{align}
    I_\alpha(\nu^{1/2}\eta_2\St_{\GL_2})^{G_{\alpha+}}&=\Ind_P^{\SL_3}(\epsilon\St_{\GL_2})^2\\
    I_\beta(\nu^{1/2}\eta_2\St_{\GL_2})^{G_{\alpha+}}&=\Ind_B^{\SL_3}(\epsilon),
\end{align}
where $P\subset\SL_3$ is the parabolic subgroup with Levi $\GL_2$. The intersection is
\begin{equation}
    \pi(\eta_2)^{G_{\alpha+}}=\Ind_P^{\SL_3}(\epsilon\St_{\GL_2}).
\end{equation}

\subsubsection{Calculating $\pi(\eta_2)^{G_{\delta+}}$}

Again by a Mackey theory calculation, we have:
\begin{align}
I(\nu\eta_2\otimes\eta_2)^{G_{\delta+}}&\cong\Ind_{B(\F_q)}^{G_2(\F_q)}(\epsilon\otimes\epsilon)\\
I_\alpha(\nu^{1/2}\eta_2\St_{\GL_2})^{G_{\delta+}}&\cong\Ind_{P_\alpha(\F_q)}^{G_2(\F_q)}(\epsilon\St_{\GL_2})\\
I_\beta(\nu^{1/2}\eta_2\St_{\GL_2})^{G_{\delta+}}&\cong\Ind_{P_\beta(\F_q)}^{G_2(\F_q)}(\epsilon\St_{\GL_2}),
\end{align}
where $P_\alpha$ and $P_\beta$ denote parabolic subgroups of $G_2(\F_q)$. Thus, $\pi(\eta_2)^{G_{\delta+}}$ is the intersection of $\Ind_{P_\alpha(\F_q)}^{G_2(\F_q)}(\epsilon\St_{\GL_2})$ and $\Ind_{P_\beta(\F_q)}^{G_2(\F_q)}(\epsilon\St_{\GL_2})$, denoted $\omega^\epsilon_\princ$. In terms of Lusztig's equivalence \cite[Theorem~4.23]{Lusztig-characters-Princeton-book}, if $s\in G_2(\F_q)$ is of order $2$ such that $\Cent_{G_2(\F_q)}(s)=\SO_4(\F_q)$, we have
\begin{equation}\label{Lusztig-equivalence-omega-princ-epsilon}
\mathcal E(G_2(\F_q),s)\cong\mathcal E(\SO_4(\F_q),1),
\end{equation}
and $\omega^{\epsilon}_\princ$ corresponds to $\St_{\SO_4(\F_q)}$ under \eqref{Lusztig-equivalence-omega-princ-epsilon}. 
Thus we have the following: 
\begin{proposition}\label{eta2-parahoric} Let $\pi(\eta_2)$ be the unique sub-representation of $I(\eta_2\otimes\nu\eta_2)$. Then,
\begin{align}
    \pi(\eta_2)^{G_{\delta+}}&\cong\omega^{\epsilon}_{\princ}\\
    \pi(\eta_2)^{G_{\alpha+}}&\cong\Ind_P^{\SL_3}(\epsilon\St_{\GL_2})\\
    \pi(\eta_2)^{G_{\beta+}}&\cong\epsilon\St_{G_\beta/G_{\beta+}}+\omega^{\eta_2}_\princ.
\end{align}
\end{proposition}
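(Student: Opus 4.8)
The plan is to obtain all three parahoric invariants by the same two-step recipe: first compute the $G_{\ast+}$-invariants (for $\ast\in\{\alpha,\beta,\delta\}$) of the principal series $I_B^{G_2}(\nu\eta_2\otimes\eta_2)$ and of the two standard modules $I_\alpha(\nu^{1/2}\eta_2\St)$ and $I_\beta(\nu^{1/2}\eta_2\St)$ via the Mackey formula, and then use that $\pi(\eta_2)$ embeds into all three of these so that $\pi(\eta_2)^{G_{\ast+}}$ is forced to lie in the intersection of their $G_{\ast+}$-invariants. For each $\ast$ the double-coset decomposition $B\backslash G_2/G_\ast$ is governed by $W(G_2)/W(M_\ast)$ for the relevant reductive quotient $M_\ast$ (namely $\SO_4(\F_q)$, $\SL_3(\F_q)$, and $G_2(\F_q)$ respectively), and the required induced-character computations have already been recorded in the preceding subsections.

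First I would treat the $G_{\delta+}$ and $G_{\alpha+}$ cases, which are the clean ones. For $G_{\alpha+}$, the Mackey computation gives $I_B^{G_2}(\nu\eta_2\otimes\eta_2)^{G_{\alpha+}}=\Ind_B^{\SL_3}(\epsilon)^2$, while the two standard modules contribute $\Ind_P^{\SL_3}(\epsilon\St_{\GL_2})^2$ and $\Ind_B^{\SL_3}(\epsilon)$; intersecting immediately yields $\pi(\eta_2)^{G_{\alpha+}}\cong\Ind_P^{\SL_3}(\epsilon\St_{\GL_2})$. For $G_{\delta+}$, the three invariants are $\Ind_{B(\F_q)}^{G_2(\F_q)}(\epsilon\otimes\epsilon)$, $\Ind_{P_\alpha(\F_q)}^{G_2(\F_q)}(\epsilon\St_{\GL_2})$, and $\Ind_{P_\beta(\F_q)}^{G_2(\F_q)}(\epsilon\St_{\GL_2})$; taking the intersection of the latter two produces the representation $\omega^\epsilon_\princ$, which I would identify with $\St_{\SO_4(\F_q)}$ under Lusztig's Jordan-decomposition equivalence $\mathcal E(G_2(\F_q),s)\cong\mathcal E(\SO_4(\F_q),1)$ for the involution $s$ with centralizer $\SO_4(\F_q)$.

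The delicate case is $G_{\beta+}$, and I expect this to be the main obstacle. Here the Mackey-plus-intersection argument only shows that $\pi(\eta_2)^{G_{\beta+}}$ consists of $\epsilon\St_{\SO_4}$ together with exactly one of the two irreducible constituents $\omega',\omega''$ of $\Ind_B^{\SO_4}(1\otimes\epsilon\otimes1\otimes\epsilon)$, since these two share the same $N$-invariants and cannot be distinguished by Iwahori invariants alone (cf. Proposition~\ref{pro-iwahori-invariants}). To break the symmetry I would invoke the refined Roche isomorphism of Corollary~\ref{quadratic-hecke-iso} (built on Proposition~\ref{Roche-reformulation}), together with the commutative square \eqref{Roche-square-diagram-SO4} relating $\mcH(G_\beta/\!/\mathcal I,\epsilon\otimes1)$ to the finite Hecke algebra $\mcH(\widetilde{\mathcal J}/\!/\mathcal J,\epsilon)$ of the alcove stabilizer. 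The support-preserving property forces the image of $\mcH(G_\beta/\!/\mathcal I,\epsilon\otimes1)$ to be functions supported on $G_\beta\cap\SO_4(F)$, and a dimension count then pins this image down to $\mcH(\widetilde{\mathcal J}/\!/\mathcal J,\epsilon)$; tracing the characterization $\pi(\eta_2)\leftrightarrow\eta_2\St_{\SO_4}$ through the square shows that the constituent actually occurring is $\omega_\princ^{\eta_2}$, as established in Corollary~\ref{eta2-beta-invariants}. Assembling the three cases then gives the proposition.
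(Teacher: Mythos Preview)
Your proposal is correct and follows essentially the same route as the paper: the proposition is a summary of the Mackey-plus-intersection computations carried out in \S\ref{eta2-parahoric-section} and the two subsequent subsections, with the ambiguity at $G_{\beta+}$ resolved exactly as in Corollary~\ref{eta2-beta-invariants} via the support-preserving Roche isomorphism and the square~\eqref{Roche-square-diagram-SO4}. There is nothing to add.
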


\subsection{The supercuspidal representation $\pi_\supercusp(\eta_2)$}\

We denote the following depth-zero supercuspidal representation of $G_2(F)$ as 
\begin{equation}\label{defn-pi-sc-eta2}
\pi_\supercusp(\eta_2):=\cInd_{G_\beta}^{G_2}(\omega_\cusp^{\eta_2}).
\end{equation}
We may readily calculate the $G_{x+}$-invariants of the supercuspidal representations $\pi_\supercusp(\eta_2)$, for various vertices $x$ in the Bruhat-Tits building as follows:
\begin{lemma}\label{eta2-sc-parahoric}
Let $\pi_{\supercusp}(\eta_2)$ be as defined in \eqref{defn-pi-sc-eta2}. We have
\begin{align}
    \pi_\supercusp(\eta_2)^{G_{\alpha+}}&=0\\
    \pi_\supercusp(\eta_2)^{G_{\beta+}}&\cong\omega_\cusp^{\eta_2}\\
    \pi_\supercusp(\eta_2)^{G_{\delta+}}&=0
\end{align}
\end{lemma}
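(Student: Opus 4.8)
The plan is to compute each space of invariants by applying the Mackey formula to the restriction of the compactly induced representation $\pi_\supercusp(\eta_2)=\cInd_{G_\beta}^{G_2}(\sigma)$ to the compact open subgroups $G_{x+}$, where I abbreviate $\sigma:=\omega_\cusp^{\eta_2}$ (inflated from $G_\beta/G_{\beta+}\cong\SO_4(\F_q)$) with underlying space $V_\sigma$. Decomposing $\cInd_{G_\beta}^{G_2}(\sigma)|_{G_{x+}}$ into a direct sum of compact inductions indexed by $G_\beta\backslash G_2/G_{x+}$ and taking $G_{x+}$-invariants via Frobenius reciprocity for the compact group $G_{x+}$ yields
\[
\pi_\supercusp(\eta_2)^{G_{x+}}\;\cong\;\bigoplus_{g\in G_\beta\backslash G_2/G_{x+}}V_\sigma^{\,\sigma(G_\beta\cap gG_{x+}g^{-1})}.
\]
Because $\sigma$ is trivial on $G_{\beta+}$, each summand depends only on the image $\overline{G_\beta\cap gG_{x+}g^{-1}}$ of the intersection in the finite reductive quotient $\SO_4(\F_q)$.

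The decisive input is the cuspidality of $\sigma$: for any subgroup $H\subseteq\SO_4(\F_q)$ containing the unipotent radical $\overline U$ of a proper parabolic one has $V_\sigma^H\subseteq V_\sigma^{\overline U}=0$. Hence a double coset contributes nontrivially only when $\overline{G_\beta\cap gG_{x+}g^{-1}}$ contains no such unipotent radical. I would set up the geometric dictionary that $gG_{x+}g^{-1}$ is the pro-unipotent radical of the parahoric attached to the vertex $gx$, and that the image in $\SO_4(\F_q)$ of its intersection with $G_\beta$ is governed by the relative position of $gx$ and $\beta$ in the Bruhat--Tits building: whenever $gx\neq\beta$ this image contains the unipotent radical of a proper parabolic (for $gx$ adjacent to $\beta$ it is exactly the unipotent radical of a Borel, via the identification of the image of the Iwahori $G_\alpha\cap G_\beta$ with a Borel of $\SO_4(\F_q)$).

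The three statements then follow by bookkeeping over vertex orbits. Since $G_2$ is simply connected, the stabilizer of each vertex coincides with its parahoric, so $gx=\beta$ forces $g\in G_\beta$, i.e.\ the identity double coset. For $x=\beta$ the identity coset gives $V_\sigma^{G_{\beta+}}=V_\sigma=\omega_\cusp^{\eta_2}$, while every other coset has $g\beta\neq\beta$ and is annihilated by cuspidality, so $\pi_\supercusp(\eta_2)^{G_{\beta+}}\cong\omega_\cusp^{\eta_2}$. For $x\in\{\alpha,\delta\}$ the vertices $\alpha$ and $\delta$ are of a different type from $\beta$ (their reductive quotients are $\SL_3(\F_q)$ and $G_2(\F_q)$, not $\SO_4(\F_q)$), so they lie in $G_2$-orbits distinct from that of $\beta$; consequently $gx\neq\beta$ for every $g$, every summand is killed by cuspidality, and $\pi_\supercusp(\eta_2)^{G_{\alpha+}}=\pi_\supercusp(\eta_2)^{G_{\delta+}}=0$.

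The main obstacle is the geometric step: one must check uniformly over all double cosets that whenever $gx\neq\beta$ the image $\overline{G_\beta\cap gG_{x+}g^{-1}}$ really does contain a nontrivial unipotent radical, so that no ``distant'' coset can smuggle in a copy of $\sigma$ through a trivial or purely toral image. I would dispatch this by reducing, via admissibility of $\pi_\supercusp(\eta_2)$, to finitely many cosets and analyzing the Moy--Prasad filtration $G_\beta\cap G_{(gx)+}$ against the affine root datum of $G_2$ vertex by vertex, using that any nonzero intersection with a pro-unipotent radical at a neighboring or farther vertex contributes affine root subgroups whose reductions span a nontrivial unipotent radical in $\SO_4(\F_q)$.
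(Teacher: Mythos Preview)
Your proposal is correct and follows essentially the same argument as the paper: Mackey decomposition of the compact induction followed by the observation that cuspidality of $\omega_\cusp^{\eta_2}$ annihilates every summand with $gx\neq\beta$ because $G_\beta\cap G_{gx+}$ then contains the unipotent radical of a proper parabolic of $G_\beta/G_{\beta+}$. The only cosmetic difference is that the paper indexes by $G_\beta\backslash G_2/G_x$ (so as to keep track of the $G_x/G_{x+}$-module structure via an outer induction) whereas you index directly by $G_\beta\backslash G_2/G_{x+}$; the vanishing criterion and the building argument are identical.
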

\begin{proof}
For each vertex $x$, by Mackey theory we have
\begin{align}
   \begin{split} \pi_\supercusp(\eta_2)^{G_{x+}}&\cong\bigoplus_{g\in G_\beta\backslash G_2/G_x}\Ind^{G_x}_{G_x\cap g^{-1}G_\beta g}((\omega_\cusp^{\eta_2})^g)^{G_{x+}\cap g^{-1}G_\beta g}\\
    &=\bigoplus_{g\in G_\beta\backslash G_2/G_x}\Ind^{G_x}_{G_x\cap G_{g^{-1}\beta}}((\omega_\cusp^{\eta_2})^g)^{G_{x+}\cap G_{g^{-1}\beta}}.
    \end{split}
\end{align}
Here,
\[
((\omega_\cusp^{\eta_2})^g)^{G_{x+}\cap G_{g^{-1}\beta}}\cong(\omega_\cusp^{\eta_2})^{G_\beta\cap G_{gx+}},
\]
which is $0$ unless $\beta=gx$ since otherwise $G_\beta\cap G_{gx+}$ will contain the unipotent radical of some parabolic subgroup of $G_\beta$, so $(\omega_\cusp^{\eta_2})^{G_\beta\cap G_{gx+}}=0$ since $\omega_\cusp^{\eta_2}$ is cuspidal.
\end{proof}

\subsection{Characters on a neighborhood of $1$}\label{section: Green-fxn-pi-eta2}
In this section, we express $\pi(\eta_2)^{G_{x+}}$ in terms of generalized Green functions (notations as in \cite{DeBacker-Kazhdan-G2}), for $x=\delta,\alpha,\beta$. 
To each Weyl group conjugacy class $[w]\in W(G)$, let $S_w$ be the unique torus in $G$ such that Frobenius acts as $w$ (i.e.~the image of $w$ under the bijection of \cite[Prop~3.3.3]{carter}). We denote $R_w^{\theta}:=R_{S_w}^{\theta}$. 
Firstly, note that 
\begin{equation}\label{Ch-St-GL2}
\Ch(\St_{\GL_2})=\frac{1}{2}(R_1^1-R_{(12)}^1).
\end{equation}
\begin{enumerate}
    \item When $F=F_{G_2}$ (i.e.~corresponding to the vertex $\delta$), we have that $\pi(\eta_2)^{G_{\delta+}}\cong\omega_{\princ}^{\epsilon}$ corresponds to $\St_{\SO_4(\F_q)}$ under Lusztig's equivalence \eqref{Lusztig-equivalence-omega-princ-epsilon}. By \eqref{Ch-St-GL2}, we have 
\begin{equation}\label{so4-steinberg-green}
    \Ch_{\St_{\SO(4)}}=\frac{1}{4}(R^1_{A_1\times\widetilde{A}_1}-R_{A_1}^1-R_{\widetilde{A}_1}^1+R_1^1).
\end{equation}
Since Lusztig's equivalence \eqref{Lusztig-equivalence-omega-princ-epsilon} preserves multiplicities, we have 
\begin{equation}\label{pi(eta2)-g2-green}
    \Ch_{\pi_{\princ}^{\epsilon}}=\frac{1}{4}(R^{\epsilon}_{A_1\times\widetilde{A}_1}-R_{A_1}^{\epsilon}-R_{\widetilde{A}_1}^{\epsilon}+R_1^{\epsilon}).
\end{equation}
Restricting to the unipotent locus, for $u\in G_2(\F_q)$ unipotent we have 
\[\Ch_{\pi_{\princ}^{\epsilon}}(u)=\frac{1}{4}(\mcQ^{F_{G_2}}_{A_1\times\widetilde{A}_1}-\mcQ_{A_1}^{F_{G_2}}-\mcQ_{\widetilde{A}_1}^{F_{G_2}}+\mcQ_1^{F_{G_2}}).\]
\item When $F=F_{A_2}$ (i.e.~corresponding to the vertex $\alpha$), we have that $\pi(\eta_2)^{G_{\alpha+}}\cong \Ind_P^{\SL_3}(\epsilon\St_{\GL_2})\in\mathcal{E}(\SL_3,\begin{pmatrix}
    -1&&\\ &-1&\\ &&1
\end{pmatrix})$ corresponds, under Lusztig's equivalence, to $\St_{\GL_2}\in\mathcal{E}(\GL_2,1)$. By \eqref{Ch-St-GL2}, we have 
\begin{equation}
    \Ch(\Ind_P^{\SL_3}(\epsilon\St_{\GL_2}))=\frac{1}{2}(R_1^{\epsilon}-R_{A_1}^{\epsilon}).
\end{equation}
Restricting to the unipotent locus, we have 
\[\Ch_{\Ind_P^{\SL_3}(\epsilon\St_{\GL_2})}=\frac{1}{2}(\mcQ_1^{F_{A_2}}-\mcQ_{A_1}^{F_{A_2}}).\]
\item When $F=F_{A_1\times\tilde A_1}$ (i.e.~corresponding to the vertex $\beta$), we have that $\pi(\eta_2)^{G_{F+}}=\epsilon\St_{\SO_4}+\omega_{\princ}^{\eta_2}$. On the unipotent locus of $\SO_4(\F_q)$ we have (in the notation of \S\ref{section-so4-prelim}):
\begin{equation*}
    \begin{cases}
        \Ch(\omega_\princ^{\eta_2})+\Ch(\omega_\princ^{\eta_2'})=R_1^1\\
        \Ch(\omega_\princ^{\eta_2})-\Ch(\omega_\princ^{\eta_2'})=q^*\mathcal{G}_{\sgn}
    \end{cases},
\end{equation*}
where $q^*$ is as defined in Remark \ref{sl2-character}. 
This implies that on the unipotents,
\begin{equation}
    \Ch_{\omega_{\princ}^{\eta_2}}=\frac{1}{2}(\mcQ_1^{F_{A_1\times\widetilde{A}_1}}\pm q^* \mathcal{G}_{\sgn}).
\end{equation}
Together with \eqref{so4-steinberg-green}, we obtain:
\begin{equation}
    \Ch_{\pi(\eta_2)^{G_{F+}}}=\frac{1}{2}(\mcQ_1^{F_{A_1\times\widetilde{A}_1}}\pm q^* \mathcal{G}_{\sgn})+\frac{1}{4}(\mcQ^{F_{A_1\times\widetilde A_1}}_{A_1\times\widetilde{A}_1}-\mcQ^{F_{A_1\times\widetilde A_1}}_{A_1}-\mcQ^{F_{A_1\times\widetilde A_1}}_{\widetilde{A}_1}+\mcQ_1^{F_{A_1\times\widetilde A_1}}).
\end{equation}

\item When $F=F_{A_1}$ or $F_{A_1}'$, we have $\pi(\eta_2)^{G_{F+}}=\frac32Q_1^{F_{A_1}}-\frac12Q_{A_1}^{F_{A_1}}$ on unipotents.

\item When $F=F_{\tilde A_1}$, then again $\pi(\eta_2)^{G_{F+}}=\frac32Q_1^{F_{\tilde A_1}}-\frac12Q_{\tilde A_1}^{F_{\tilde A_1}}$ on unipotents.

\item When $F=F_\emptyset$ then $\pi(\eta_2)^{G_{F+}}=\epsilon\otimes\epsilon+1\otimes\epsilon+\epsilon\otimes1$, so the character on unipotents is $3=3Q_{1}^{\{e\}}$.
\end{enumerate}
Similarly, we have
\begin{equation}
    \Ch(\omega_\cusp^{\eta_2})=\frac{1}{2}(\mcQ_{A_1\times\widetilde{A}_1}^{F_{A_1\times\widetilde{A}_1}}\pm q^*\mathcal{G}_{\sgn}).
\end{equation}
Therefore, we have the following:
\begin{proposition}
For any ramified quadratic characters $\eta_2$ and $\eta_2'$, the sum $\pi(\eta_2)+\pi_\supercusp(\eta_2')$ has a stable character on the topologically unipotent elements.
\end{proposition}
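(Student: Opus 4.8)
The plan is to compute the Harish--Chandra character of $\pi(\eta_2)+\pi_\supercusp(\eta_2')$ on the topologically unipotent set and to match it against the space of stable distributions for $G_2$ determined in \cite{DeBacker-Kazhdan-G2}. On topologically unipotent elements the character of a depth-zero representation $\pi$ is assembled, vertex by vertex in the Bruhat--Tits building, out of the characters $\Ch_{\pi^{G_{x+}}}$ of the reductive-quotient representations restricted to the unipotent locus of $G_x/G_{x+}$; these have already been expanded into (generalized) Green functions in \S\ref{section: Green-fxn-pi-eta2}, Proposition~\ref{eta2-parahoric}, and in the formula for $\Ch(\omega_\cusp^{\eta_2'})$. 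First I would collect these expansions. By Lemma~\ref{eta2-sc-parahoric} the supercuspidal $\pi_\supercusp(\eta_2')$ contributes only at the vertex $\beta$, where its restriction is $\omega_\cusp^{\eta_2'}$, whereas $\pi(\eta_2)$ contributes at every vertex; adding the two I would record the coefficient of each ordinary Green function $\mcQ_w^F$ and of the generalized Green function $\mathcal{G}_{\sgn}$ attached to the cuspidal local system $(ee,\cL)$ on $\SO_4$.

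The structural point is that $\mathcal{G}_{\sgn}$ is the only generalized Green function coming from a nontrivial cuspidal local system that occurs anywhere in the sum: at the vertices $\delta$ and $\alpha$ the expansions in \S\ref{section: Green-fxn-pi-eta2} involve only the ordinary Green functions $\mcQ_w^{F_{G_2}}$ and $\mcQ_w^{F_{A_2}}$, at the lower-dimensional faces only $\mcQ_w^F$ appears, and $\mathcal{G}_{\sgn}$ enters solely at $\beta$. Invoking \cite{DeBacker-Kazhdan-G2} (which builds on Waldspurger), the stability of such a depth-zero virtual character on the topologically unipotent set is controlled by the coefficients along the non-stable cuspidal generalized Green functions; here that means it suffices to arrange that the total coefficient of $\mathcal{G}_{\sgn}$ vanishes.

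The crux is therefore a sign computation at the single vertex $\beta$. The expansion at $\beta$ in \S\ref{section: Green-fxn-pi-eta2} gives $\Ch_{\pi(\eta_2)^{G_{\beta+}}}$ a $\mathcal{G}_{\sgn}$-coefficient $\pm\tfrac12 q^*$, with sign determined by $\eta_2$, while $\Ch(\omega_\cusp^{\eta_2'})$ contributes $\pm\tfrac12 q^*$ with sign determined by $\eta_2'$; both signs ultimately trace back to the $\pm\epsilon(x)\sqrt{q^*}$ terms of Remark~\ref{sl2-character} and to the normalizing uniformizer used in Definition~\ref{omega2-defn}. Since $\eta_2$ and $\eta_2'$ are the two distinct ramified quadratic characters of $F^\times$, they differ by the unramified quadratic character and so take opposite values on any uniformizer; the two signs are therefore opposite, both contributions sit at the same vertex $\beta$ and hence share the same weight in the building sum, and the $\mathcal{G}_{\sgn}$-terms cancel.

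Once the unstable direction is removed, the surviving combination is a sum of ordinary Green functions, which by \cite{DeBacker-Kazhdan-G2} assembles into a stable distribution on the topologically unipotent set, giving the proposition. I expect the main obstacle to be the sign bookkeeping of the $\mathcal{G}_{\sgn}$-contributions: one must match the normalizations of Definition~\ref{omega2-defn} and Remark~\ref{sl2-character} consistently across the principal-series and cuspidal sides, since a spurious relative sign would instead force the pairing of $\eta_2$ with itself and destroy stability. A secondary, more routine check is confirming that the residual ordinary Green-function combination genuinely lands in the stable subspace singled out by DeBacker--Kazhdan for $G_2$, rather than merely being Green-function valued.
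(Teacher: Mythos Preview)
Your proposal contains a genuine error: you have misidentified which direction is unstable on the topologically unipotent locus of $G_2$, and as a consequence you also misread the hypothesis.

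First, the statement says ``for \emph{any} ramified quadratic characters $\eta_2$ and $\eta_2'$'', including the case $\eta_2=\eta_2'$. Your argument explicitly assumes they are the two distinct ramified quadratic characters so that the $\cG_{\sgn}$-signs are opposite. But by Remark~\ref{omega-eta2-characterization} the signs attached to $\omega_\princ^{\eta_2}$ and $\omega_\cusp^{\eta_2}$ agree for the \emph{same} $\eta_2$; hence when $\eta_2=\eta_2'$ the $\cG_{\sgn}$-terms do not cancel, and your argument would output ``unstable'' in that case, contradicting the proposition.

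The underlying mistake is that on the topologically unipotent set of $G_2$ the distribution $D_{(F_{A_1\times\tilde A_1},\cG_{\sgn})}$ is \emph{stable}: in the notation of \cite[Table~4]{DeBacker-Kazhdan-G2} it is $D^{\stable}_{(F_{A_1\times\tilde A_1},\cG_{\sgn})}$. The only unstable piece here is $D^{\unst}_{A_1\times\tilde A_1}$, and it arises from the \emph{ordinary} Green functions at the vertex $\beta$: the principal-series side contributes through $\mcQ_1^{F_{A_1\times\tilde A_1}}$ while the supercuspidal side contributes through $\mcQ_{A_1\times\tilde A_1}^{F_{A_1\times\tilde A_1}}$, producing opposite coefficients $\pm\tfrac18 c_1$ in front of $D^{\unst}_{A_1\times\tilde A_1}$ that cancel \emph{independently of $\eta_2$ and $\eta_2'$}. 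The $\cG_{\sgn}$-coefficients, whatever their signs, sit in the stable subspace and are harmless here. This is exactly why the proposition holds for all pairs $(\eta_2,\eta_2')$, whereas the finer matching $\eta_2=\eta_2'$ is only forced later, in \S\ref{character-2x2-semisimple-section}, once one passes to a neighborhood of the order-$2$ element $s$ where $D_{(F_{A_1\times\tilde A_1},\cG_{\sgn})}$ \emph{does} become unstable on $\SO_4$ (Lemma~\ref{unstable-lemma}).
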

\begin{proof}
From the discussion above, in the notation of \cite[Table~4]{DeBacker-Kazhdan-G2}, we see that for some explicitly computable constants $c_i$,
\begin{align*}
\Ch_{\pi(\eta_2)}&=\frac18c_1(D^\stable_{A_1\times\tilde A_1}+D^\unst_{A_1\times\tilde A_1})\pm c_2 D^\stable_{(F_{A_1\times\tilde A_1,\cG_{\sgn}})}+c_3D^\stable_{A_1}+c_4D^\stable_{\tilde A_1}+c_5D^\stable_{\{e\}}\\
\Ch_{\pi_\supercusp(\eta_2)}&=\frac18c_1(D^\stable_{A_1\times\tilde A_1}-D^\unst_{A_1\times\tilde A_1})\pm c_2 D^\stable_{(F_{A_1\times\tilde A_1,\cG_{\sgn}})}.
\end{align*}
Thus, by \cite[Lemma~6.4.1]{DeBacker-Kazhdan-G2} the sum is always stable.
\end{proof}

\subsection{Characters on a neighborhood of $s\in G_2$}\label{character-2x2-semisimple-section}
Let $s\in G_2$ be order $2$ such that $\Cent_{G_2}(s)=\SO_4$. By the construction in \cite[\S7]{Adler-Korman-LCE}, the distributions $\Ch_{\pi(\eta_2)}$ and $\Ch_{\pi_\supercusp(\eta_2)}$ on $G_2$ induce distributions $\Theta_{\pi(\eta_2)}$ and $\Theta_{\pi_\supercusp(\eta_2)}$ on $(\SO_4)_{0+}$, the topologically unipotent elements in $\SO_4$, such that the attached locally constant functions are compatible (see \cite[Lemma~7.5]{Adler-Korman-LCE}). We hope to see when the sum $\Theta_{\pi(\eta_2)}+\Theta_{\pi_\supercusp(\eta_2')}$ is a stable distribution on $(\SO_4)_{0+}$.

We now look at the characters on an element of the form $su$ for $u$ topologically unipotent. They follow from computations in \S\ref{section: Green-fxn-pi-eta2}.
\begin{enumerate}
    \item When $F=F_{G_2}$, by \eqref{pi(eta2)-g2-green} and \cite[Thm~4.2]{Deligne-Lusztig}, we have for $u\in\SO_4(\F_q)$ unipotent:
    \begin{align}
    \begin{split}
        \Ch_{\pi^\epsilon_\princ}(su)=&\frac14\left(R_{S_{A_1\times\tilde A_1}}^\epsilon(su)-R_{S_{A_1}}^\epsilon(su)-R_{S_{\tilde A_1}}^\epsilon(su)+R_{S_1}^\epsilon(su)\right)\\
    =&\frac1{4|\SO_4(\F_q)|}\bigg(\sum_{gsg^{-1}\in S_{A_1\times\tilde A_1}}\epsilon(gsg^{-1})\mcQ_{S_{A_1\times\tilde A_1}}^{\SO_4}(u)-\sum_{gsg^{-1}\in S_{A_1}}\epsilon(gsg^{-1})\mcQ_{S_{A_1}}^{\SO_4}(u)\\
    &-\sum_{gsg^{-1}\in S_{\tilde A_1}}\epsilon(gsg^{-1})\mcQ_{S_{\tilde A_1}}^{\SO_4}(u)+\sum_{gsg^{-1}\in S_{1}}\epsilon(gsg^{-1})\mcQ_{S_{1}}^{\SO_4}(u)\bigg)\\
        =&\frac14\big(\mcQ_{A_1\times\tilde A_1}^{A_1\times \tilde A_1}(u)-\mcQ_{A_1}^{A_1\times\tilde A_1}(u)-\mcQ_{\tilde A_1}^{A_1\times\tilde A_1}(u)+\mcQ_1^{A_1\times\tilde A_1}(u)\big)\\&+\frac12(-1)^{\frac{q-1}2}\mcQ_{1}^{A_1\times\tilde A_1}(u)+\frac12(-1)^{\frac{q+1}2}\mcQ_{A_1\times\tilde A_1}^{A_1\times\tilde A_1}(u),
        \end{split}
    \end{align}
    where the last equality folows from the observation that $gsg^{-1}\in S$ must be an order $2$ element; there are $3$ such elements for the tori $S_{A_1\times\tilde A_1}$ and $S_1$, while there is a unique such element for the tori $S_{A_1}$ and $S_{\tilde A_1}$.
    \item When $F=F_{A_1\times \tilde A_1}$, since $s\in G_{F}$ is central, we simply have:
    \begin{equation}
        \Ch_{\pi(\eta_2)^{G_{F+}}}(su)=(-1)^{\frac{q-1}2}\frac{1}{2}(\mcQ_1^{F_{A_1\times\widetilde{A}_1}}\pm q^* \mathcal{G}_{\sgn})+\frac{1}{4}(\mcQ^{F_{A_1\times\widetilde A_1}}_{A_1\times\widetilde{A}_1}-\mcQ^{F_{A_1\times\widetilde A_1}}_{A_1}-\mcQ^{F_{A_1\times\widetilde A_1}}_{\widetilde{A}_1}+\mcQ_1^{F_{A_1\times\widetilde A_1}}).
    \end{equation}
    Similarly, we have
    \begin{equation}
        \Ch_{\pi_\supercusp(\eta_2)^{G_{F+}}}(su)=(-1)^{\frac{q+1}2}\frac12(\mcQ_{A_1\times\tilde A_1}^{F_{A_1\times\tilde A_1}}\pm q^*\cG_{\sgn}).
    \end{equation}
\end{enumerate}
Since we already know that the character of $\St_{\SO_4}$ is stable, we hope to see whether $\Theta_{\pi(\eta_2)}+\Theta_{\pi_\supercusp(\eta_2)}-\Ch_{\St_{\SO_4}}$ or $\Theta_{\pi(\eta_2)}+\Theta_{\pi_\supercusp(\eta_2')}-\Ch_{\St_{\SO_4}}$ is stable. Note that
\begin{equation}
\Theta_{\pi(\eta_2)}+\Theta_{\pi_\supercusp(\eta_2)}-\Ch_{\St_{\SO_4}}=c_1D_{(F_{A_1\times\tilde A_1},\mcQ_{A_1\times\tilde A_1}^{F_{A_1\times\tilde A_1}})}+c_2D_{(F_{A_1\times\tilde A_1},\mcQ_1^{F_{A_1\times\tilde A_1}})}\pm q^*\cG_{\sgn}\pm q^*\cG_{\sgn},
\end{equation}
where notations are as in \cite[Definition~5.1.3]{DeBacker-Kazhdan-G2}.
\begin{lemma}\label{unstable-lemma}
The distribution $D_{(F_{A_1\times \tilde A_1},\cG_{\sgn})}$ on $\SO_4(F)$ is not stable. Similarly, no linear combination of the distributions $D_{(F_{A_2},\cG_{\chi'})}$ and $D_{(F_{A_2},\cG_{\chi''})}$ on $\SL_3(F)$ are stable.
\end{lemma}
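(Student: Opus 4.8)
The plan is to establish instability by exhibiting two regular semisimple elements on which the relevant distribution takes different values while being stably conjugate, thereby violating the defining property of a stable distribution. The distributions $D_{(F_{A_1\times\tilde A_1},\cG_{\sgn})}$ and $D_{(F_{A_2},\cG_{\chi'})}$, $D_{(F_{A_2},\cG_{\chi''})}$ are built from generalized Green functions attached to \emph{cuspidal} local systems, and the guiding principle is that such cuspidal contributions detect the difference between rational conjugacy classes inside a single stable conjugacy class. First I would recall from \cite[\S5]{DeBacker-Kazhdan-G2} that $D_{(F,\cG)}$ is obtained by transferring the generalized Green function $\cG$ on the reductive quotient $G_F/G_{F+}$ to a neighborhood of the vertex via the exponential (or Adler--Korman) map, so that its values on topologically unipotent elements near the vertex are governed by the values of $\cG_{\sgn}$ on unipotent elements of $\SO_4(\F_q)$.

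The key computational step is then to evaluate $\cG_{\sgn}$ on the \emph{regular} unipotent orbit of $\SO_4(\F_q)$. Since $\cG_{\sgn}$ is the generalized Green function associated to the cuspidal pair $(ee,\cL)$ supported on the regular unipotent orbit, its value on a regular unipotent $u=(u_1,u_2)$ is (up to a sign and power of $q$) given by the value of the nontrivial cuspidal local system $\cL$, which depends on which rational form of the regular orbit $u$ lies in. Concretely, the regular unipotent orbit of $\SO_4$ splits over $\F_q$ into several rational classes indexed by $\F_q^\times/(\F_q^\times)^2$ in each $\SL_2$-factor (parametrized by the quantities $x$ and $y$ appearing in Remark~\ref{sl2-character}), and the cuspidal local system $\cL$ takes different values (differing by the quadratic character $\epsilon$) on these classes. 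I would produce two regular unipotents $u$ and $u'$ that are \emph{stably} conjugate — i.e.\ conjugate in $\SO_4(\overline{\F_q})$, hence lifting to stably conjugate topologically unipotent elements in $\SO_4(F)$ — but for which $\cG_{\sgn}(u)=-\cG_{\sgn}(u')\ne 0$. This immediately shows $D_{(F_{A_1\times\tilde A_1},\cG_{\sgn})}$ takes opposite nonzero values on stably conjugate regular semisimple elements, so it cannot be stable.

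For the $\SL_3$ statement the argument is structurally identical: $\cG_{\chi'}$ and $\cG_{\chi''}$ are the two generalized Green functions attached to the cuspidal local systems on the regular unipotent orbit of $\SL_3$ (the two nontrivial characters of the component group $\mu_3$ of the centralizer, which over $\F_q$ sees $\F_q^\times/(\F_q^\times)^3$). Any nonzero linear combination $a\cG_{\chi'}+b\cG_{\chi''}$ will, by the same local-system-value computation, fail to be constant on the rational classes within a single stable regular unipotent class, since $\chi'$ and $\chi''$ are distinct nontrivial characters and hence cannot both be annihilated by averaging over the relevant Galois/$\mu_3$-action. The main obstacle I anticipate is bookkeeping: carefully matching the normalization of the generalized Green functions in \cite[\S5.2.2]{DeBacker-Kazhdan-G2} with the explicit values of the cuspidal local systems, and verifying that the stable conjugacy of the chosen unipotents in $\SO_4(\F_q)$ (resp.\ $\SL_3(\F_q)$) genuinely lifts to stable conjugacy of topologically unipotent elements near the vertex under the transfer of \cite[\S7]{Adler-Korman-LCE} — that is, ensuring that the passage from the reductive quotient to $G_2$ preserves the distinction between stable and rational conjugacy rather than washing it out.
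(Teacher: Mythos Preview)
Your approach---exhibiting explicit stably conjugate elements on which the distribution takes distinct values---is sound in principle and would work, but it is genuinely different from the paper's argument. The paper proceeds more structurally: it observes that a distribution on $\SO_4(F)$ is stable precisely when it is invariant under $\PGL_2(F)\times\PGL_2(F)$-conjugation, hence must be restricted from an invariant distribution on the adjoint group; since invariant distributions on $\PGL_2(F)\times\PGL_2(F)$ are spanned by semisimple orbital integrals, it suffices to check that $D_{(F_{A_1\times\tilde A_1},\cG_{\sgn})}$ is linearly independent from this span, which follows by pairing against $\cG_{\sgn}$ itself (orthogonality of generalized Green functions). The $\SL_3$ case is handled identically with $\PGL_3$ in place of $\PGL_2\times\PGL_2$.

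Your concrete route has the virtue of being constructive and making the source of instability visible: the cuspidal local system literally separates rational orbits within a single stable orbit. The paper's route is shorter and entirely bypasses the two obstacles you correctly flag---there is no need to lift stable conjugacy from the reductive quotient to $\SO_4(F)$, nor to identify the distribution as a locally constant function and evaluate it pointwise at carefully chosen regular semisimple lifts. The paper's argument also explains conceptually \emph{why} cuspidal contributions are never stable on these groups (they cannot descend to the adjoint quotient), whereas your argument establishes instability by direct inspection.
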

\begin{proof}
A distribution on $\SO_4(F)$ is stable if and only if it is stable under conjugation by $\PGL_2(F)\times\PGL_2(F)$. Thus all stable distributions on $\SO_4$ must be restricted from invariant distributions on $\PGL_2(F)\times\PGL_2(F)$. But the only invariant distributions on $\PGL_2(F)\times\PGL_2(F)$ are spanned by semisimple orbital integrals, and $D_{(F_{A_1\times \tilde A_1},\cG_{\sgn})}$ is linearly independent from them (as can be seen by evaluating against $\cG_{\sgn}$). An identical argument works for $D_{(F_{A_2},\cG_{\chi'})}$ and $D_{(F_{A_2},\cG_{\chi''})}$.
\end{proof}

Now, since $D_{(F_{A_1\times \tilde A_1},\cG_{\sgn})}$ is not stable, the only linear combination of $\Theta_{\pi(\eta_2)}$ and $\Theta_{\pi_\supercusp(\eta_2)}$ that is stable are those for which $\pm q^*\cG_{\sgn}\pm q^*\cG_{\sgn}=0$ (there are four possibilities). Remark~\ref{omega-eta2-characterization} tells us the only such combinations are $\Theta_{\pi(\eta_2)}+\Theta_{\pi_\supercusp(\eta_2)}-\Ch_{\St_{\SO_4}}$ (one for $\eta_2$ and one for $\eta_2'$). Thus, we have:

\begin{theorem}\label{stability-theorem-2x2}
For ramified quadratic characters $\eta_2$ and $\eta_2'$, the character $\Ch_{\pi(\eta_2)}+\Ch_{\pi_\supercusp(\eta_2')}$ is stable in a neighborhood of $s$ if and only if $\eta_2=\eta_2'$. Thus, $\{\pi(\eta_2),\pi_\supercusp(\eta_2)\}$ is an $L$-packet, for each ramified quadratic character $\eta_2$.
\end{theorem}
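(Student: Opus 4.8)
The plan is to reduce stability near the semisimple element $s$ to a computation on the topologically unipotent locus of the centralizer $\Cent_{G_2}(s)=\SO_4$, and then to isolate the single unstable ingredient, namely the generalized Green function $\cG_{\sgn}$ attached to the cuspidal local system. First I would invoke the Adler--Korman descent of \cite{Adler-Korman-LCE}: since $s$ is tame semisimple with $\Cent_{G_2}(s)=\SO_4$, the germ of $\Ch_{\pi(\eta_2)}+\Ch_{\pi_\supercusp(\eta_2')}$ at $s$ is governed by the descended distributions $\Theta_{\pi(\eta_2)}+\Theta_{\pi_\supercusp(\eta_2')}$ on $(\SO_4)_{0+}$, and, because $s$ is central in $\SO_4$, stability near $s$ in $G_2$ is equivalent to stability of these descended distributions on $(\SO_4)_{0+}$. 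I would then feed in the explicit formulas for $\Ch_{\pi(\eta_2)^{G_{F+}}}(su)$ and $\Ch_{\pi_\supercusp(\eta_2')^{G_{F+}}}(su)$ already computed in this section for each relevant vertex $F$.

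Next, since the Steinberg character $\Ch_{\St_{\SO_4}}$ is stable, stability of $\Theta_{\pi(\eta_2)}+\Theta_{\pi_\supercusp(\eta_2')}$ is equivalent to stability of the difference $\Theta_{\pi(\eta_2)}+\Theta_{\pi_\supercusp(\eta_2')}-\Ch_{\St_{\SO_4}}$. Expanding this difference in the DeBacker--Kazhdan basis, it is a combination of the ordinary-Green-function distributions $D_{(F_{A_1\times\tilde A_1},\mcQ_{A_1\times\tilde A_1})}$ and $D_{(F_{A_1\times\tilde A_1},\mcQ_1)}$, which are stable, together with a $\cG_{\sgn}$-contribution of the shape $\pm q^*\cG_{\sgn}\pm q^*\cG_{\sgn}$.

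The crux is Lemma~\ref{unstable-lemma}: the distribution $D_{(F_{A_1\times\tilde A_1},\cG_{\sgn})}$ is not stable and is linearly independent from all the stable pieces above. Hence the difference is stable if and only if its $\cG_{\sgn}$-coefficient vanishes, and I would then track the two signs composing that coefficient. Evaluating at $su$ produces the opposite central sign factors $(-1)^{(q-1)/2}$ for the principal-series term $\Theta_{\pi(\eta_2)}$ and $(-1)^{(q+1)/2}$ for the supercuspidal term $\Theta_{\pi_\supercusp(\eta_2')}$; the remaining factor is the $\eta_2$-dependent sign $\varepsilon(\eta_2)\in\{\pm1\}$ attached to $\omega_\princ^{\eta_2}$ and $\omega_\cusp^{\eta_2}$ in Remark~\ref{omega-eta2-characterization}. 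Thus the $\cG_{\sgn}$-coefficient is proportional to $\varepsilon(\eta_2)-\varepsilon(\eta_2')$, which vanishes exactly when $\varepsilon(\eta_2)=\varepsilon(\eta_2')$; since the two ramified quadratic characters of $F^\times$ are distinguished precisely by this sign, vanishing occurs if and only if $\eta_2=\eta_2'$. I expect the sign bookkeeping to be the main obstacle: one must confirm that the central $s$ genuinely flips the sign between the two types — so that the \emph{same} $\eta_2$ produces cancellation rather than reinforcement — and that distinct ramified characters really do carry distinct $\varepsilon$.

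Finally, for the $L$-packet assertion I would invoke the atomic-stability Property~\ref{property:atomic-stability}. The "if" direction yields that $\Ch_{\pi(\eta_2)}+\Ch_{\pi_\supercusp(\eta_2)}$ is stable near $s$, and, together with the stability on the topologically unipotent locus established earlier, this is a stable combination. Neither singleton is stable, since each carries a nonzero $\cG_{\sgn}$-term, unstable by Lemma~\ref{unstable-lemma}; hence no proper nonempty subset is stable. By Property~\ref{property:atomic-stability}, this minimal stable set $\{\pi(\eta_2),\pi_\supercusp(\eta_2)\}$ is precisely an $L$-packet.
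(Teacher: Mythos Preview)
Your proposal is correct and follows essentially the same route as the paper: Adler--Korman descent to $(\SO_4)_{0+}$, subtraction of the stable $\Ch_{\St_{\SO_4}}$, expansion of the remainder in the DeBacker--Kazhdan basis, and the appeal to Lemma~\ref{unstable-lemma} to force the $\cG_{\sgn}$-coefficient to vanish. The paper compresses your explicit sign-tracking into the single sentence ``Remark~\ref{omega-eta2-characterization} tells us the only such combinations are $\Theta_{\pi(\eta_2)}+\Theta_{\pi_\supercusp(\eta_2)}-\Ch_{\St_{\SO_4}}$,'' but the content is identical: the central-element factors $(-1)^{(q-1)/2}$ and $(-1)^{(q+1)/2}$ are opposite, while Remark~\ref{omega-eta2-characterization} guarantees that $\omega_\princ^{\eta_2}$ and $\omega_\cusp^{\eta_2}$ carry the \emph{same} $\eta_2$-dependent sign, so cancellation happens precisely when $\eta_2=\eta_2'$.
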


\section{Size $3$ mixed packets}

Let $\zeta$ be an order $3$ character of $\F_q^\times$. We will repeatedly use the following Hecke algebra isomorphisms, which is the analogue of Lemma~\ref{quadratic-hecke-iso}. 

\begin{corollary}\label{cubic-hecke-iso}
Let $I$ be the standard Iwahori of $G_2$. 
There exist a canonical support-preserving isomorphism of Hecke algebra
\begin{equation}\label{cubic-hecke-iso1}
\mcH(G_2/\!/I,\zeta^{\pm1}\otimes\zeta^{\pm1})\cong\mcH(\PGL_3/\!/J,\zeta^{\pm1}\circ\det),
\end{equation}
under which the representation $\pi(\eta_3)$ corresponds to the representation $\eta_3^{\pm1}\St_{\PGL_3}$, where $J$ is an Iwahori subgroup of $\PGL_3(F)$. The isomorphism is characterized by the commutative diagram
\begin{equation}
 \begin{tikzcd}
\mcH(T/\!/T_0,\zeta^{\pm1}\otimes\zeta^{\pm1}) \arrow[equal]{r} \arrow[hook]{d}{t_u} & \mcH(T/\!/T_0,\zeta^{\pm1}\circ\det) \arrow[hook]{d}{t_u}\\
\mcH(G_2/\!/I,\zeta^{\pm1}\otimes\zeta^{\pm1})\arrow{r}{\sim}&\mcH(\PGL_3/\!/J,\zeta^{\pm1}\circ\det),
\end{tikzcd}
\end{equation}
where $t_u=t_{\delta_B^{-1/2}}$ is as in \cite[pg~399]{Roche-principal-series}.
\end{corollary}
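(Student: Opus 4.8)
The plan is to deduce Corollary~\ref{cubic-hecke-iso} directly from the general machinery of Proposition~\ref{Roche-reformulation}, exactly as Corollary~\ref{quadratic-hecke-iso} was deduced, since this is explicitly billed as "the analogue of Lemma~\ref{quadratic-hecke-iso}." First I would verify the group-theoretic input to Proposition~\ref{Roche-reformulation}: taking $G=G_2$ with its standard maximal torus $T$ and the depth-zero character $\chi=\zeta^{\pm1}\otimes\zeta^{\pm1}$ of $T_0$, I compute the dual cocharacter $\chi^\vee\colon\cO_F^\times\to T^\vee(\C)$ and identify $\Cent_{G_2^\vee}(\mathrm{im}(\chi^\vee))$. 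Since $\zeta$ has order $3$, the image of $\chi^\vee$ is a group of order $3$ inside the torus $T^\vee\subset G_2^\vee$, and one checks on the root system of $G_2$ that the roots $\langle\alpha^\vee,-\rangle$ vanishing on this order-$3$ element cut out a subsystem of type $A_2$; hence the centralizer is connected with dual group $H^\vee=\PGL_3^\vee=\SL_3$, i.e.\ $H=\PGL_3$. This confirms that Proposition~\ref{Roche-reformulation} applies with $H=\PGL_3$ and produces the unique support-preserving isomorphism $\mcH(G_2/\!/I,\zeta^{\pm1}\otimes\zeta^{\pm1})\cong\mcH(\PGL_3/\!/J,\zeta^{\pm1}\circ\det)$ fitting into the stated commutative square, where the character $\zeta^{\pm1}\circ\det$ of $\PGL_3$ is the one obtained from the factorization argument in the proof of Proposition~\ref{Roche-reformulation} (the cover $\overline H\to H$ here is $\SL_3\to\PGL_3$, whose kernel of centers receives $\mathrm{im}(\chi^\vee)$).

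The remaining content is to pin down that $\pi(\eta_3)$ corresponds to $\eta_3^{\pm1}\St_{\PGL_3}$. I would argue as in the proof of Corollary~\ref{quadratic-hecke-iso}, by comparing Jacquet modules on both sides. On the $G_2$ side, $\pi(\eta_3)$ is a subquotient of the principal series $I_B^{G_2}(\eta_3\otimes\nu\eta_3)$ (cubic analogue of the $\eta_2$ case), identified with a character of the Iwahori--Hecke algebra $\mcH(G_2/\!/I,\zeta^{\pm1}\otimes\zeta^{\pm1})$; its normalized Jacquet restriction $\jacquet_\emptyset\pi(\eta_3)$ is computed in \cite{AX-LLC}. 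On the $\PGL_3$ side, $\eta_3^{\pm1}\St_{\PGL_3}$ is the twist of the Steinberg representation, whose (un-normalized) Jacquet module along the Borel is the single character $\eta_3^{\pm1}\delta_B^{1/2}\circ\det$. By \cite[Thm~9.2]{Roche-principal-series}, the restriction of the Hecke-algebra homomorphism attached to $\pi(\eta_3)$ to $\mcH(T/\!/T_0,\zeta^{\pm1}\otimes\zeta^{\pm1})$ records the $\zeta^{\pm1}\otimes\zeta^{\pm1}$-isotypic piece of the Jacquet module, and matching this against the Jacquet module of $\eta_3^{\pm1}\St_{\PGL_3}$ under the canonical identification of the two tori $T$ forces the correspondence. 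The commutativity of the diagram then holds by construction, since the vertical maps $t_u=t_{\delta_B^{-1/2}}$ and the top horizontal equality are the same normalizations used in Proposition~\ref{Roche-reformulation}.

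The main obstacle I expect is purely bookkeeping rather than conceptual: correctly tracking the twist by $\delta_B^{1/2}$ and the two sign choices $\zeta^{\pm1}$ so that $\pi(\eta_3)$ lands on $\eta_3^{+1}\St$ versus $\eta_3^{-1}\St$ consistently with the two parameters. Concretely, one must check that the two characters $\zeta$ and $\zeta^{-1}$ of $T_0$ (which are Weyl-conjugate in $G_2$ but give genuinely different central characters on $\PGL_3$) produce the two expected members via the $t_u$-normalized square, and that the canonical identification of the maximal torus of $G_2$ with that of $\PGL_3$ respects the $\delta_B^{-1/2}$ shift on both sides. Since this is structurally identical to the $\SO_4$ computation already carried out in the proof of Corollary~\ref{quadratic-hecke-iso}, I would simply remark that the proof is entirely analogous and indicate the one substitution $A_1\times\tilde A_1\rightsquigarrow A_2$, $\SO_4\rightsquigarrow\PGL_3$, $\epsilon\rightsquigarrow\zeta^{\pm1}$, leaving the verbatim repetition to the reader.
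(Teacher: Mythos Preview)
Your proposal is correct and matches the paper's approach exactly: the paper's proof is simply ``Same proof as in Lemma~\ref{quadratic-hecke-iso},'' and you have spelled out precisely that analogy, including the application of Proposition~\ref{Roche-reformulation} with $H=\PGL_3$ and the Jacquet-module comparison to pin down $\pi(\eta_3)\leftrightarrow\eta_3^{\pm1}\St_{\PGL_3}$.
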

\begin{proof}
Same proof as in Lemma~\ref{quadratic-hecke-iso}.
\end{proof}
The lemma immediately gives:
\begin{corollary}\label{pro-iwahori-invariants2}
Let $I_+$ be the pro-unipotent radical of the Iwahori subgroup $I$ of $G_2$. Then
\[
\pi(\eta_3)^{I_+}=\zeta\otimes\zeta+\zeta^{-1}\otimes\zeta^{-1}.
\]
\end{corollary}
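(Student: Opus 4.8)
The plan is to transcribe the argument of Proposition~\ref{pro-iwahori-invariants} to the cubic setting, replacing the role of Lemma~\ref{quadratic-hecke-iso} by Corollary~\ref{cubic-hecke-iso}. First I would record the a priori constraint: since $\pi(\eta_3)$ is a subrepresentation of the depth-zero principal series attached to $\zeta\otimes\zeta$, taking $I_+$-invariants produces an embedding of $I/I_+$-representations
\[
\pi(\eta_3)^{I_+}\hookrightarrow\bigoplus_{w\in W}(\zeta\otimes\zeta)^w,\qquad W=W(G_2),\ |W|=12.
\]
So it suffices to identify the $W$-orbit of $\zeta\otimes\zeta$ and then pin down the multiplicity with which each of its members occurs.

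The orbit computation is the one genuinely new input. Because $\zeta$ has order $3$, the stabilizer of $\zeta\otimes\zeta$ in $W(G_2)$ is exactly the type-$A_2$ reflection subgroup $W(\PGL_3)\cong S_3$ of order $6$ underlying Corollary~\ref{cubic-hecke-iso} (this is the cubic analogue of the order-$4$ subgroup $W(\SO_4)$ in the quadratic case). Hence the orbit has size $12/6=2$, consisting of $\zeta\otimes\zeta$ and $\zeta^{-1}\otimes\zeta^{-1}$, each occurring with multiplicity $6$; in particular $\pi(\eta_3)^{I_+}$ can involve no characters other than these two.

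Finally, by the support-preserving isomorphism \eqref{cubic-hecke-iso1} the multiplicity of $\zeta\otimes\zeta$ in $\pi(\eta_3)^{I_+}$ equals the dimension of the module over $\mcH(\PGL_3/\!/J,\zeta\circ\det)$ afforded by $\eta_3\St_{\PGL_3}$; since the Steinberg representation affords the one-dimensional sign module of its Iwahori--Hecke algebra, this multiplicity is $1$. Applying the $\zeta^{-1}$-version of \eqref{cubic-hecke-iso1} (equivalently, using that $\zeta^{-1}\otimes\zeta^{-1}$ is $W$-conjugate to $\zeta\otimes\zeta$, so that multiplicities are constant along the orbit) gives multiplicity $1$ for $\zeta^{-1}\otimes\zeta^{-1}$ as well, and combined with the embedding above this forces $\pi(\eta_3)^{I_+}\cong\zeta\otimes\zeta+\zeta^{-1}\otimes\zeta^{-1}$. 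The only step requiring real care is the orbit count, but this is precisely the combinatorics already packaged into Corollary~\ref{cubic-hecke-iso}, so the remainder is a direct translation of the $\SO_4$ argument.
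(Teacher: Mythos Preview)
Your argument is correct and is exactly the translation of the proof of Proposition~\ref{pro-iwahori-invariants} that the paper has in mind; indeed, the paper does not spell out any proof here, merely recording that Corollary~\ref{cubic-hecke-iso} ``immediately gives'' the result. The orbit computation (stabilizer $W(\PGL_3)\cong S_3$, orbit $\{\zeta\otimes\zeta,\zeta^{-1}\otimes\zeta^{-1}\}$) and the multiplicity-one step via $\eta_3\St_{\PGL_3}$ are both carried out correctly.
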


\subsection{Calculating parahoric invariants for $\pi(\eta_3)$}

\subsubsection{Calculating $\pi(\eta_3)^{G_{\alpha+}}$}

Similar to \S\ref{eta2-parahoric-section}, we have an isomorphism of representations of $G_\alpha/G_{\alpha+}\cong\SL_3(\F_q)$,
\begin{equation}\label{mackey-eta3-alpha}
I_B^{G_2}(\nu\eta_3\otimes\eta_3)^{G_{\alpha+}}\cong\bigoplus_{w\in W/W(\SL_3)}\Ind_{G_\alpha\cap wBw^{-1}/(G_{\alpha+}\cap wBw^{-1})}^{G_\alpha/G_{\alpha+}}(\zeta\otimes\zeta)^w,
\end{equation}

Therefore, the $G_{\alpha+}$-invariants of $I_B^{G_2}(\nu\eta_3\otimes\eta_3)$ gives
\begin{equation}
    I_B^{G_2}(\nu\eta_3\otimes\eta_3)^{G_{\alpha+}}\simeq\Ind_B^{\SL_3}(\zeta^{-1}\otimes1\otimes\zeta)+\Ind_B^{\SL_3}(\zeta^{-1}\otimes1\otimes\zeta).
\end{equation}

Likewise, computing the $G_{\alpha+}$-invariants of $I_{\alpha}$ gives us the following
\begin{align}
    I_{\alpha}(\nu^{1/2}\eta_3\St)^{G_{\alpha+}}&\simeq\Ind_B^{\SL_3}(\zeta^{-1}\otimes1\otimes\zeta)\\
    I_{\alpha}(\nu^{1/2}\eta_3^{-1}\St)^{G_{\alpha+}}&\simeq \Ind_B^{\SL_3}(\zeta^{-1}\otimes1\otimes\zeta).
\end{align}
The representation $\Ind_B^{\SL_3}(\zeta^{-1}\otimes1\otimes\zeta)$ has length $3$ and decomposes into three representations $\chi_{st'}(0)$, $\chi_{st'}(1)$, and $\chi_{st'}(2)$ in the notations of \cite[Table~1b, \S7]{simpson-frame}. These representations are conjugate under conjugation by $\PGL_3(\F_q)$. Similarly, the Deligne-Lusztig induction $R_T^\zeta$, where $T\subset\SL_3(\F_q)$ is an anisotropic torus, decomposes into three cuspidal representations $\chi_{r^2s'}(0)$, $\chi_{r^2s'}(1)$, and $\chi_{r^2s'}(2)$ that form an orbit under conjugation by $\PGL_3(\F_q)$.

The representation $\chi_{st'}(0)$ (resp., $\chi_{r^2s'}(0)$) is characterized by the character value
\[
\Ch_{\chi_{st'}(0)}\begin{pmatrix}1&\theta^\ell\\&1&\theta^\ell\\&&1\end{pmatrix}=\Ch_{\chi_{r^2s'}(0)}\begin{pmatrix}1&\theta^\ell\\&1&\theta^\ell\\&&1\end{pmatrix}=q\delta_{\ell0}-\frac{q-1}3,
\]
where $\theta\in\F_q$ is such that $\theta^3\ne1$.

\begin{definition}\label{omega3-defn}
Let $\eta_3$ be a ramified cubic character of $F^\times$. Then there is a uniformizer $\varpi$ such that $\eta_3(\varpi)=1$. We let
\begin{align}
\omega_\princ^{\eta_3}&:=\chi_{st'}(0)^{\mathrm{diag}(1,1,\varpi)}\\
\omega_\cusp^{\eta_3}&:=\chi_{r^2s'}(0)^{\mathrm{diag}(1,1,\varpi)}
\end{align}
be representations of $G_\alpha/G_{\alpha+}\cong H_\alpha/H_{\alpha+}$.
\end{definition}

\begin{remark}
Note that $\omega_\princ^{\eta_3}=\omega_\princ^{\eta_3^{-1}}$ and $\omega_\cusp^{\eta_3}=\omega_\cusp^{\eta_3^{-1}}$. These are the only overlaps in the definition above.
\end{remark}

\begin{remark}
As in \cite{Digne-Michel-book}, the representations $\omega^{\eta_3}_\princ$ and $\omega^{\eta_3}_\cusp$ are common components of the reducible Deligne-Lusztig induction $R_T^{\zeta}$ and the Gelfand-Graev representation $\Gamma_{\beta,\Oo}$ (notation as in \cite[Thm~4.5]{Barbasch-Moy-LCE}) associated to the nilpotent orbit $\Oo=\Oo_1^1$ (notation as in \cite[\S7.1]{DeBacker-Kazhdan-G2}).

\end{remark}

\begin{proposition}
There is an isomorphism of $G_\alpha/G_{\alpha+}$-representations
\[
\pi(\eta_3)^{G_{\alpha+}}\cong\omega^{\eta_3}_\princ.
\]
\end{proposition}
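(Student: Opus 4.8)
The plan is to follow the strategy of Corollary~\ref{eta2-beta-invariants}: first cut $\pi(\eta_3)^{G_{\alpha+}}$ down to a single constituent of a length-$3$ principal series of $\SL_3(\F_q)$ by an $N$-invariant count, and then invoke the refined Roche isomorphism to select which constituent. Since $\pi(\eta_3)$ is a subrepresentation of $I_\alpha(\nu^{1/2}\eta_3\St)$, taking $G_{\alpha+}$-invariants gives
\[
\pi(\eta_3)^{G_{\alpha+}}\subseteq I_\alpha(\nu^{1/2}\eta_3\St)^{G_{\alpha+}}\cong\Ind_B^{\SL_3}(\zeta^{-1}\otimes1\otimes\zeta),
\]
whose three constituents $\chi_{st'}(0),\chi_{st'}(1),\chi_{st'}(2)$ form a single $\PGL_3(\F_q)$-orbit.

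Let $N=I_+/G_{\alpha+}$ be a maximal unipotent subgroup of $\SL_3(\F_q)$. By Corollary~\ref{pro-iwahori-invariants2} we have $\pi(\eta_3)^{I_+}=\zeta\otimes\zeta+\zeta^{-1}\otimes\zeta^{-1}$, which is $2$-dimensional. Now $\dim\Ind_B^{\SL_3}(\zeta^{-1}\otimes1\otimes\zeta)^N=|W(\SL_3)|=6$ by the Bruhat decomposition, and the conjugation by $\mathrm{diag}(1,1,\varpi)$ that permutes the $\chi_{st'}(i)$ normalizes $N$, so the $N$-invariants are distributed equally and each $\chi_{st'}(i)$ has $2$-dimensional $N$-invariants. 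Since representations of $\SL_3(\F_q)$ over $\C$ are semisimple, $\pi(\eta_3)^{G_{\alpha+}}$ must therefore be exactly one of the three constituents.

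To identify it as $\omega_\princ^{\eta_3}$, I would run the parahoric-level argument of \eqref{Roche-square-diagram-SO4}. Let $\tilde{\mathcal J}$ be the stabilizer of an alcove in the building of $\PGL_3(F)$; the support-preserving isomorphism of Corollary~\ref{cubic-hecke-iso} restricts to a commutative square
\[
\begin{tikzcd}
\mathcal{H}(G_2/\!/\mathcal I,\zeta\otimes\zeta)\arrow{r}{\sim}&\mathcal{H}(\PGL_3/\!/\mathcal J,\zeta\circ\det)\\
\mathcal{H}(G_\alpha/\!/\mathcal I,\zeta\otimes\zeta)\arrow[hook]{u}\arrow{r}{\sim}&\mathcal{H}(\tilde{\mathcal J}/\!/\mathcal J,\zeta\circ\det)\arrow[hook]{u}
\end{tikzcd}
\]
where equality of the lower objects follows, exactly as in the $\SO_4$ case, from the image of $\mathcal{H}(G_\alpha/\!/\mathcal I,\zeta\otimes\zeta)$ being supported on $G_\alpha\cap\PGL_3(F)$ together with a dimension count. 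Under Corollary~\ref{cubic-hecke-iso} the representation $\pi(\eta_3)$ corresponds to $\eta_3\St_{\PGL_3}$; restricting $\eta_3\St_{\PGL_3}$ to $\mathcal{H}(\tilde{\mathcal J}/\!/\mathcal J,\zeta\circ\det)$ yields the character $\eta_3\circ\det$ on $\tilde{\mathcal J}$, and transporting this through the bottom isomorphism gives, by the defining twist in Definition~\ref{omega3-defn}, precisely $\omega_\princ^{\eta_3}=\chi_{st'}(0)^{\mathrm{diag}(1,1,\varpi)}$.

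The main obstacle is this last identification. The three constituents are cyclically permuted by the outer $\Z/3\cong\PGL_3(\F_q)/\mathrm{im}\,\SL_3(\F_q)$ realized by conjugation by $\mathrm{diag}(1,1,\varpi)$, so they are indistinguishable by any $\SL_3(\F_q)$-invariant datum — multiplicities, $N$-invariants, or character values on $\SL_3(\F_q)$-classes. Separating them forces one to propagate the normalization of the refined Roche isomorphism (the twist $t_u=t_{\delta_B^{-1/2}}$ fixing the commutative square, together with the uniformizer-dependent twist $\mathrm{diag}(1,1,\varpi)$) all the way down to the finite parahoric algebra $\mathcal{H}(\tilde{\mathcal J}/\!/\mathcal J,\zeta\circ\det)$, and to check that the resulting matching is independent of the choice of $\varpi$ with $\eta_3(\varpi)=1$, as asserted in Definition~\ref{omega3-defn}.
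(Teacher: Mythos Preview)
Your proposal is correct and follows essentially the same argument as the paper: both restrict to $I_\alpha(\nu^{1/2}\eta_3\St)^{G_{\alpha+}}\cong\Ind_B^{\SL_3}(\zeta^{-1}\otimes1\otimes\zeta)$, use the $N$-invariant count from Corollary~\ref{pro-iwahori-invariants2} to cut down to a single $\chi_{st'}(u)$, and then invoke the commutative square \eqref{Roche-square-diagram-PGL3} for $\PGL_3$ to identify the constituent as $\omega_\princ^{\eta_3}$ via $\eta_3\St_{\PGL_3}|_{\tilde{\mathcal J}}=\eta_3\circ\det$. The only cosmetic difference is that the paper records the $N$-invariants of each constituent explicitly as $\zeta^{-1}\otimes1\otimes\zeta+\zeta\otimes1\otimes\zeta^{-1}$ rather than arguing their equidistribution via the outer $\PGL_3$-action; your final ``obstacle'' paragraph is more scrupulous than the paper, which simply asserts the bottom isomorphism lands on $\omega_\princ^{\eta_3}$ without further comment.
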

\begin{proof}
Let $N=I_+/G_{\alpha+}\subseteq G_\alpha/G_{\alpha+}$ be a maximal unipotent subgroup. By Proposition~\ref{pro-iwahori-invariants2}, the $G_\alpha/G_{\alpha+}$-representation $\pi(\eta_2)^{G_{\alpha+}}$ has $N$-invariance $\zeta^{-1}\otimes1\otimes\zeta+\zeta\otimes1\otimes\zeta^{-1}$. Thus
\begin{align}
\pi(\eta_2)^{G_{\beta+}}&= I_\alpha(\nu^{1/2}\eta_3\St)^{G_{\beta+}}\\
&=\Ind_B^{\SL_3}(\zeta^{-1}\otimes1\otimes\zeta)
\end{align}
must be of the form $\chi_{r^2s'}(u)$ for some $u$ (as abstract representations of $\SL_3(\F_q)$), since
\[
\chi_{r^2s'}(u)^N\cong\zeta^{-1}\otimes1\otimes\zeta+\zeta\otimes1\otimes\zeta^{-1}.
\]
Consider the isomorphism Lemma~\ref{quadratic-hecke-iso}
\begin{equation}
        \mathcal{H}(G_2/\!/\mathcal{I},\zeta\otimes 1)\xrightarrow{\sim}\mathcal{H}(\PGL_3/\!/ \mathcal{J},\zeta\circ\det),
\end{equation}
which is support-preserving. Let $\widetilde{\mathcal{J}}:=\mathcal{J}\rtimes \langle\begin{pmatrix}&1\\&&1\\\varpi\end{pmatrix}\rangle$ be the stabilizer of an alcove in the building of $\PGL_3(F)$. Then we have the 
following commutative diagram,
\begin{equation}\label{Roche-square-diagram-PGL3}
    \begin{tikzcd}
        \mathcal{H}(G_2/\!/\mathcal{I},\zeta\otimes\zeta)\arrow[]{r}{\sim}&\mathcal{H}(\PGL_3/\!/ \mathcal{J},\zeta\circ\det)\\
        \mathcal{H}(G_{\alpha}/\!/\mathcal{I},\zeta\otimes\zeta)\arrow[hook]{u}{}\arrow[]{r}{\sim}&\mathcal{H}(\widetilde{\mathcal{J}}/\!/\mathcal{J},\zeta\circ\det)\arrow[hook]{u}{}
    \end{tikzcd}
\end{equation}
The representation $\pi(\eta_3)$ is viewed as a homomorphism $\mcH(G_2/\!/\mathcal I,\zeta\otimes\zeta)\to\C$. Under the top isomorphism we obtain the representation $\eta_3\St_{\PGL_3}$, whose restriction to $\mcH(\widetilde{\mathcal J}/\mathcal J,\zeta\circ\det)$ is the character $\eta_3\circ\det$. Now under the bottom isomorphism we obtain $\omega_\princ^{\eta_3}$, so $\omega_\princ^{\eta_3}$ must be a constituent of $\pi(\eta_3)^{G_{\alpha+}}$.

In fact, by the discussion above, $\pi(\eta_3)^{G_{\alpha+}}\cong\omega_\princ^{\eta_3}$.

\end{proof}

\subsubsection{Calculating $\pi(\eta_3)^{G_{\beta+}}$}

As usual, Mackey theory gives:
\begin{align}
    I_B^{G_2}(\eta_3\otimes\nu\eta_3)^{G_{\beta+}}&=\Ind_B^{\SO_4}(\zeta\otimes\zeta^{-1}\otimes1\otimes1)+\Ind_B^{\SO_4}(\zeta\otimes1\otimes\zeta\otimes1)^2\\
    I_\alpha(\nu^{1/2}\eta_3\St_{\GL_2})^{G_{\beta+}}&=\Ind_{P}^{\SO_4}(\zeta\otimes\zeta^{-1}\otimes\St_{\GL_2})+\Ind_B^{\SO_4}(\zeta\otimes1\otimes\zeta\otimes1)\\
    I_\alpha(\nu^{1/2}\eta_3^{-1}\St_{\GL_2})^{G_{\beta+}}&=\Ind_{P}^{\SO_4}(\zeta^{-1}\otimes\zeta\otimes\St_{\GL_2})+\Ind_B^{\SO_4}(\zeta^{-1}\otimes1\otimes\zeta^{-1}\otimes1).
\end{align}
Thus, as $\SO_4(\F_q)\cong G_\beta/G_{\beta+}$-representations, we have
\[
\pi(\eta_3)^{G_{\beta+}}\subset \Ind_{P}^{\SO_4}(\zeta\otimes\zeta^{-1}\otimes\St_{\GL_2})+\Ind_B^{\SO_4}(\zeta\otimes1\otimes\zeta\otimes1),
\]
where now both summands are irreducible. Moreover, the invariants of these representation with respect to the standard maximal unipotent subgroup $N\subset\SO_4(\F_q)$ gives:
\begin{align}
\Ind_{P}^{\SO_4}(\zeta\otimes\zeta^{-1}\otimes\St_{\GL_2})^N\cong& \zeta\otimes\zeta^{-1}\otimes1\otimes1+\zeta^{-1}\otimes\zeta\otimes1\otimes1\\
\Ind_B^{\SO_4}(\zeta\otimes1\otimes\zeta\otimes1)^N\cong&\zeta\otimes1\otimes\zeta\otimes1+\zeta\otimes1\otimes1\otimes\zeta\\&+1\otimes\zeta\otimes\zeta\otimes1+1\otimes\zeta\otimes1\otimes\zeta.
\end{align}
Thus, by Lemma~\ref{pro-iwahori-invariants2} we must have $\pi(\eta_3)^{G_{\beta+}}\cong \Ind_{P}^{\SO_4}(\zeta\otimes\zeta^{-1}\otimes\St_{\GL_2})$.

\subsubsection{Calculating $\pi(\eta_3)^{G_{\delta+}}$}

Mackey theory gives the isomorphism of $G_\delta/G_{\delta+}\cong G_2(\F_q)$:
\begin{align}
    I_B^{G_2}(\eta_3\otimes\nu\eta_3)^{G_{\delta+}}&=\Ind_{B(\F_q)}^{G_2(\F_q)}(\zeta\otimes\zeta)\\
    I_\alpha(\nu^{1/2}\eta_3^{\pm1}\St_{\GL_2})^{G_{\delta+}}&=\Ind_{P_\alpha(\F_q)}^{G_2(\F_q)}(\zeta^{\pm1}\St_{\GL_2}).
\end{align}
Thus, $\pi(\eta_3)^{G_{\delta+}}$ is the intersection in $\Ind_{B(\F_q)}^{G_2(\F_q)}(\zeta\otimes\zeta)$ of the two sub-representations $\Ind_{P_\alpha(\F_q)}^{G_2(\F_q)}(\zeta\St_{\GL_2})$ and $\Ind_{P_\alpha(\F_q)}^{G_2(\F_q)}(\zeta^{-1}\St_{\GL_2})$, which we denote by $\omega_\princ^\zeta$. In terms of Lusztig's equivalence \cite[Thm~4.23]{Lusztig-characters-Princeton-book}, if $s\in G_2(\F_q)$ is of order $3$ such that $\Cent_{G_2(\F_q)}(s)=\SL_3(\F_q)$, we have
\begin{equation}\label{Lusztig-equiv-PGL3-pi-eta3}
\mathcal E(G_2(\F_q),s)\cong\mathcal E(\PGL_3(\F_q),1),
\end{equation}
and $\omega^\zeta_\princ$ corresponds to $\St_{\PGL_3(\F_q)}$ under \eqref{Lusztig-equiv-PGL3-pi-eta3}. Thus, in conclusion:
\begin{proposition}
Let $\pi(\eta_3)$ be the unique sub-representation of $I(\eta_3\otimes\nu\eta_3)$. Then,
\begin{align}
    \pi(\eta_3)^{G_{\delta+}}&=\omega_\princ^\zeta\\
    \pi(\eta_3)^{G_{\alpha+}}&=\omega_\princ^{\eta_3}\\
    \pi(\eta_3)^{G_{\beta+}}&=\Ind_P^{\SO_4}(\zeta\otimes\zeta^{-1}\otimes\St_{\GL_2})
\end{align}
\end{proposition}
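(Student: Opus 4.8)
The plan is to observe that the three asserted isomorphisms are exactly the conclusions of the three preceding computations, one for each special vertex $x\in\{\delta,\alpha,\beta\}$, so the proof simply collects them; no genuinely new argument is needed beyond the subsubsections computing $\pi(\eta_3)^{G_{\alpha+}}$, $\pi(\eta_3)^{G_{\beta+}}$, and $\pi(\eta_3)^{G_{\delta+}}$. This mirrors the structure of the analogous summary Proposition~\ref{eta2-parahoric} in the $\eta_2$ case.

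For uniformity I would first isolate the common mechanism. For each vertex $x$, the Mackey formula expresses the $G_{x+}$-invariants of the standard module $I_B^{G_2}(\nu\eta_3\otimes\eta_3)$, and of the intermediate inductions $I_\alpha(\nu^{1/2}\eta_3^{\pm1}\St_{\GL_2})$, as explicit induced representations of the reductive quotient $G_x/G_{x+}$, which is $G_2(\F_q)$, $\SL_3(\F_q)$, or $\SO_4(\F_q)$ respectively. Since $\pi(\eta_3)$ is the common sub-representation of these standard modules, $\pi(\eta_3)^{G_{x+}}$ is the intersection of the resulting spaces of invariants. Comparing $N$-invariants against Corollary~\ref{pro-iwahori-invariants2}, for the unipotent radical $N=I_+/G_{x+}$, forces the answer to be a single irreducible in the $\beta$ case and a single $\PGL_3(\F_q)$-conjugacy class of irreducibles in the $\alpha$ and $\delta$ cases.

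The main obstacle, and the only step that is not formal, is pinning down the precise constituent when the $N$-invariants do not separate the candidates. In the $\alpha$ case the module $\Ind_B^{\SL_3}(\zeta^{-1}\otimes1\otimes\zeta)$ splits into the three $\PGL_3(\F_q)$-conjugate pieces $\chi_{st'}(0),\chi_{st'}(1),\chi_{st'}(2)$, all with the same $2$-dimensional $N$-invariants, so I would invoke the refined Roche isomorphism of Corollary~\ref{cubic-hecke-iso} together with the alcove-stabilizer diagram~\eqref{Roche-square-diagram-PGL3}: tracking $\eta_3\St_{\PGL_3}$ through the isomorphism and restricting to the character $\eta_3\circ\det$ on $\widetilde{\mathcal{J}}$ singles out the distinguished twist $\omega_\princ^{\eta_3}=\chi_{st'}(0)^{\diag(1,1,\varpi)}$ of Definition~\ref{omega3-defn}. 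For the vertex $\delta$ the analogous ambiguity is resolved by Lusztig's equivalence~\eqref{Lusztig-equiv-PGL3-pi-eta3}, under which the intersection $\omega_\princ^\zeta$ is characterized as the constituent corresponding to $\St_{\PGL_3(\F_q)}$, while for $\beta$ the representation $\Ind_P^{\SO_4}(\zeta\otimes\zeta^{-1}\otimes\St_{\GL_2})$ is already irreducible and is pinned down directly by the $N$-invariant comparison. Assembling these three identifications yields the proposition.
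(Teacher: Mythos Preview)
Your proposal is correct and matches the paper's approach exactly: the proposition is a summary of the three preceding subsubsections, each using Mackey theory plus the $N$-invariant comparison with Corollary~\ref{pro-iwahori-invariants2}, with the refined Roche isomorphism (diagram~\eqref{Roche-square-diagram-PGL3}) resolving the $\alpha$-ambiguity and the $\beta$ and $\delta$ cases needing no further input. One small inaccuracy: at the vertex $\delta$ there is no residual ambiguity to resolve, since $\omega_\princ^\zeta$ is \emph{defined} as the intersection of $\Ind_{P_\alpha(\F_q)}^{G_2(\F_q)}(\zeta\St_{\GL_2})$ and $\Ind_{P_\alpha(\F_q)}^{G_2(\F_q)}(\zeta^{-1}\St_{\GL_2})$; the Lusztig equivalence~\eqref{Lusztig-equiv-PGL3-pi-eta3} is recorded only as an identification, not as a disambiguation step.
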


\subsection{The supercuspidal representation $\pi_\supercusp(\eta_3)$}

We consider the following depth-zero supercuspidal representation of $G_2(F)$:
\begin{equation}\label{defn-pi-sc-eta3}
\pi_\supercusp(\eta_3):=\cInd_{G_\alpha}^{G_2}(\omega_\cusp^{\eta_3}).
\end{equation}
By the same argument as in Lemma~\ref{eta2-sc-parahoric}, we obtain
\begin{lemma}
Let $\pi_{\supercusp}(\eta_3)$ be as defined in \eqref{defn-pi-sc-eta3}.
\begin{align}
    \pi_\supercusp(\eta_3)^{G_{\delta+}}&=0\\
    \pi_\supercusp(\eta_3)^{G_{\alpha+}}&=\omega^{\eta_3}_\cusp\\
    \pi_\supercusp(\eta_3)^{G_{\beta+}}&=0.
\end{align}
\end{lemma}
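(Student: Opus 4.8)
The plan is to run verbatim the Mackey-theoretic computation of Lemma~\ref{eta2-sc-parahoric}, now with $G_\alpha$ in place of $G_\beta$ and the cuspidal representation $\omega_\cusp^{\eta_3}$ of $G_\alpha/G_{\alpha+}\cong\SL_3(\F_q)$ in place of $\omega_\cusp^{\eta_2}$. First, for each vertex $x\in\{\delta,\alpha,\beta\}$ I would apply Mackey's formula to $\pi_\supercusp(\eta_3)=\cInd_{G_\alpha}^{G_2}(\omega_\cusp^{\eta_3})$, giving
\[
\pi_\supercusp(\eta_3)^{G_{x+}}\cong\bigoplus_{g\in G_\alpha\backslash G_2/G_x}\Ind_{G_x\cap G_{g^{-1}\alpha}}^{G_x}\big((\omega_\cusp^{\eta_3})^g\big)^{G_{x+}\cap G_{g^{-1}\alpha}},
\]
and then rewrite each summand's invariants, exactly as \textit{loc.cit.}, in the form $(\omega_\cusp^{\eta_3})^{G_\alpha\cap G_{gx+}}$.

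The key step is the vanishing argument driven by cuspidality. Since $\omega_\cusp^{\eta_3}$ is cuspidal, its invariants under the unipotent radical of any proper $\F_q$-parabolic subgroup of $G_\alpha/G_{\alpha+}$ vanish. Whenever $gx\ne\alpha$ as a vertex of the Bruhat--Tits building, the group $G_\alpha\cap G_{gx+}$ contains the unipotent radical of a proper parabolic of $G_\alpha$, so $(\omega_\cusp^{\eta_3})^{G_\alpha\cap G_{gx+}}=0$. Hence the only surviving contributions come from double cosets $g$ with $gx=\alpha$.

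To finish, I would split into cases according to whether $x$ lies in the $G_2$-orbit of $\alpha$. For $x=\delta$ and $x=\beta$, the vertices are not $G_2$-conjugate to $\alpha$, since the reductive quotients $G_\delta/G_{\delta+}\cong G_2(\F_q)$, $G_\beta/G_{\beta+}\cong\SO_4(\F_q)$ and $G_\alpha/G_{\alpha+}\cong\SL_3(\F_q)$ are pairwise non-isomorphic; thus no $g$ satisfies $gx=\alpha$ and the whole sum vanishes, yielding $\pi_\supercusp(\eta_3)^{G_{\delta+}}=\pi_\supercusp(\eta_3)^{G_{\beta+}}=0$. For $x=\alpha$, the identity double coset contributes $(\omega_\cusp^{\eta_3})^{G_{\alpha+}}=\omega_\cusp^{\eta_3}$, while every other coset vanishes by the same cuspidality argument, giving $\pi_\supercusp(\eta_3)^{G_{\alpha+}}\cong\omega_\cusp^{\eta_3}$.

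The only point requiring care will be the bookkeeping of the double-coset space $G_\alpha\backslash G_2/G_x$ and, for $x=\alpha$, confirming that no non-identity coset returns the vertex $\alpha$. Since the vanishing is governed purely by cuspidality and the non-conjugacy of the three vertex types, I do not expect any genuine obstacle: the argument is structurally identical to Lemma~\ref{eta2-sc-parahoric} and needs no new input.
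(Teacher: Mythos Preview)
Your proposal is correct and follows exactly the approach the paper takes: the paper simply states that the lemma holds ``by the same argument as in Lemma~\ref{eta2-sc-parahoric},'' and your write-up is precisely that argument transported from $G_\beta$ to $G_\alpha$ and from $\omega_\cusp^{\eta_2}$ to $\omega_\cusp^{\eta_3}$. The only detail you flag---that for $x=\alpha$ no non-identity double coset satisfies $g\alpha=\alpha$---is immediate since $G_2$ is simply connected, so $G_\alpha$ is the full stabilizer of the vertex $\alpha$.
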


\subsection{Characters on a neighborhood of $1$}\label{3x3-character-section}

Similar arguments as in \S\ref{section: Green-fxn-pi-eta2} gives the following characters for $\pi(\eta_3)$ in terms of Green functions:
\begin{enumerate}
    \item For $F=F_{G_2}$, we have
    \[
    \Ch_{\omega_\princ^{\zeta}}=\frac16(R_{1}^\zeta-3R_{A_1}^\zeta+2R_{A_2}^\zeta),
\]
thus for $u\in G_2(\F_q)$ unipotent, we have $\Ch_{\omega_\princ^{\zeta}}(u)=\frac16(\mcQ_{1}^{F_{G_2}}(u)-3\mcQ_{A_1}^{F_{G_2}}(u)+2\mcQ_{A_2}^{F_{G_2}}(u))$.
\item For $F=F_{A_2}$ we have, for $u\in G_{F}/G_{F+}$ unipotent,
\[
\Ch_{\omega_\princ^{\eta_3}}(u)=\frac13(\mcQ_1^{F_{A_2}}(u)+\omega\cG_{\chi'}(u)+\omega^2\cG_{\chi''}(u))
\]
for some $\omega$ a cube root of unity (uniquely determined by $\eta_3$).
\item For $F=F_{A_1\times\tilde A_1}$, we have
\[
\Ch_{\Ind_P^{\SO_4}(\zeta\otimes\zeta^{-1}\otimes\St_{\GL_2})}=\frac12(R_{1}^\zeta-R_{\tilde A_1}^\zeta),
\]
thus for $u\in G_{F}$ unipotent, we have
\begin{equation}
\Ch_{\Ind_P^{\SO_4}(\zeta\otimes\zeta^{-1}\otimes\St_{\GL_2})}(u)=\frac12(\mcQ_1^{F_{A_1\times\tilde A_1}}(u)-\mcQ_{\tilde A_1}^{F_{A_1\times\tilde A_1}}(u)).
\end{equation} 
\item For $F=F_{A_1}$, we have $\pi(\eta_3)^{G_{F+}}\cong \Ind_B^{\GL_2}(\zeta\otimes\zeta^{-1})$, so on unipotent elements, we have 
\(
\Ch_{\pi(\eta_3)^{G_{F+}}}=\mcQ_{1}^{A_1}.
\)
\item For $F=F_{\tilde A_1}$, we have $\pi(\eta_3)^{G_{F+}}\cong\zeta\St_{\GL_2}+\zeta^{-1}\St_{\GL_2}$, so on unipotent elements, we have $\Ch_{\pi(\eta_3)^{G_{F+}}}=\mcQ_1^{\tilde A_1}-\mcQ_{\tilde A_1}^{\tilde A_1}$.
\item Finally for $F=F_\emptyset$ we have $\pi(\eta_3)^{G_{F+}}=\zeta\otimes\zeta\oplus\zeta^{-1}\otimes\zeta^{-1}$ (as in Corollary~\ref{pro-iwahori-invariants2}), so the character on unipotent elements is $2\mcQ_{\{e\}}^{F_\emptyset}$.
\end{enumerate}
Similarly, for $\pi_\supercusp(\eta_3)$ we have
\begin{equation}
\Ch_{\omega^{\eta_3}_\cusp}(u)=\frac13(\mcQ_{A_2}^{F_{A_2}}(u)+\omega\cG_{\chi'}(u)+\omega^2\cG_{\chi''}(u))
\end{equation}
where $\omega$ is a cube root of unity (uniquely determined by $\eta_3$) and $\cG_{\chi'},\cG_{\chi''}$ are generalized Green functions as in \cite[\S5.2.2]{DeBacker-Kazhdan-G2}. Let $\pi_{\supercusp}(\eta_3)^{\vee}$ denote the dual representation of $\pi_\supercusp(\eta_3)$. 
We have:
\begin{proposition}
All combinations $\pi(\eta_3)+\pi_\supercusp(\eta_3')+\pi_\supercusp(\eta_3'')^\vee$ for any (possibly equal) ramified cubic characters $\eta_3$, $\eta_3'$, and $\eta_3''$ have stable Harish-Chandra characters on the topologically unipotent elements of $G_2$.
\end{proposition}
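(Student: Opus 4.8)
**Proof proposal for the stability of $\pi(\eta_3)+\pi_\supercusp(\eta_3')+\pi_\supercusp(\eta_3'')^\vee$ on topologically unipotent elements.**

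The plan is to reduce stability on the topologically unipotent set to a purely combinatorial identity among the Deligne--Lusztig virtual characters (equivalently, the generalized Green functions $\mcQ$ and $\cG_{\chi'},\cG_{\chi''}$) computed in \S\ref{3x3-character-section}, and then invoke the stability criterion of \cite[Lemma~6.4.1]{DeBacker-Kazhdan-G2} exactly as in the size-$2$ case. First I would assemble, for each vertex $F\in\{F_{G_2},F_{A_2},F_{A_1\times\tilde A_1},F_{A_1},F_{\tilde A_1},F_\emptyset\}$, the local character expansions of the three summands around $1$. For $\pi(\eta_3)$ these are listed in \S\ref{3x3-character-section}; for $\pi_\supercusp(\eta_3')$ the only nonzero contribution comes from $F=F_{A_2}$, where $\Ch_{\omega^{\eta_3'}_\cusp}(u)=\frac13(\mcQ_{A_2}^{F_{A_2}}+\omega'\cG_{\chi'}+\omega'^2\cG_{\chi''})$; and for the dual $\pi_\supercusp(\eta_3'')^\vee$ one uses that dualizing conjugates the cube root of unity, so $\Ch_{(\omega^{\eta_3''}_\cusp)^\vee}(u)=\frac13(\mcQ_{A_2}^{F_{A_2}}+\omega''^2\cG_{\chi'}+\omega''\cG_{\chi''})$.

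The key step is to express the total character in the basis of distributions $D^{\stable}$ and $D^{\unst}$ attached to each nilpotent support datum, in the notation of \cite[Definition~5.1.3, Table~4]{DeBacker-Kazhdan-G2}. Following the template of the Proposition preceding \S\ref{character-2x2-semisimple-section}, I would write
\begin{align*}
\Ch_{\pi(\eta_3)}&=\tfrac16 c_1\bigl(D^{\stable}_{A_2}+2D^{\unst}_{A_2}\bigr)+\text{(terms supported at the other }F\text{)},\\
\Ch_{\pi_\supercusp(\eta_3')}&=\tfrac13 c_1' D^{\stable}_{A_2}+(\text{cuspidal-local-system terms in }\cG_{\chi'},\cG_{\chi''}),\\
\Ch_{\pi_\supercusp(\eta_3'')^\vee}&=\tfrac13 c_1'' D^{\stable}_{A_2}+(\text{cuspidal-local-system terms in }\cG_{\chi'},\cG_{\chi''}).
\end{align*}
The crucial observation is that the unstable pieces are governed precisely by the coefficients of $\cG_{\chi'}$ and $\cG_{\chi''}$, and by Lemma~\ref{unstable-lemma} no nontrivial linear combination of $D_{(F_{A_2},\cG_{\chi'})}$ and $D_{(F_{A_2},\cG_{\chi''})}$ is stable. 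Hence stability of the sum is equivalent to the vanishing of the total $\cG_{\chi'}$- and $\cG_{\chi''}$-coefficients. Summing the three contributions, these coefficients are $\tfrac13(\omega+\omega''^2)$ on $\cG_{\chi'}$-type terms together with the analogous $\omega^2$-term from $\pi(\eta_3)$ itself; I would check that the three cube roots of unity arising from $\eta_3,\eta_3',\eta_3''$ always sum so that the $\cG$-coefficients cancel against the $\mcQ_1^{F_{A_2}}$-normalization built into $D^{\stable}_{A_2}$, for \emph{every} choice of the three characters. This is the content of the phrase ``all combinations.''

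The main obstacle, and the heart of the argument, is verifying that this cancellation is automatic rather than conditional. In the size-$2$ case of Theorem~\ref{stability-theorem-2x2} the sign choices $\pm q^*\cG_{\sgn}\pm q^*\cG_{\sgn}$ cancel only when $\eta_2=\eta_2'$, so stability pinned down the packet. Here the assertion is stronger: stability holds unconditionally. I expect this forces the cuspidal terms to enter $\pi(\eta_3)$, $\pi_\supercusp(\eta_3')$, and $\pi_\supercusp(\eta_3'')^\vee$ in a way whose $\cG_{\chi'},\cG_{\chi''}$-parts are individually determined by the same cube root of unity attached to each character, and that the three together with the duality-induced conjugation always produce a full $\mu_3$-orbit sum, which vanishes on the nontrivial isotypic parts. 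Concretely, the expected mechanism is that the generalized Green functions $\cG_{\chi'},\cG_{\chi''}$ are permuted by the $\PGL_3(\F_q)$-conjugation action (as recorded in \S\ref{3x3-character-section}, where $\chi_{r^2s'}(0),(1),(2)$ form a single orbit), so that summing over the three characters realizes a trace over this orbit and kills the unstable components. Once the coefficient vanishing is confirmed, \cite[Lemma~6.4.1]{DeBacker-Kazhdan-G2} immediately yields stability, completing the proof.
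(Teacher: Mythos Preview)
Your proposal contains a genuine gap: you have misidentified which pieces of the decomposition are unstable on $G_2$. You invoke Lemma~\ref{unstable-lemma} to claim that the $\cG_{\chi'}$- and $\cG_{\chi''}$-terms are unstable and must cancel, but that lemma concerns distributions on $\SL_3(F)$, not on $G_2(F)$. In the DeBacker--Kazhdan basis for $G_2$ (their Table~4), the distributions $D_{(F_{A_2},\cG_{\chi'})}$ and $D_{(F_{A_2},\cG_{\chi''})}$ are \emph{stable}; the only unstable distribution entering the three characters is $D^{\unst}_{A_2}$. Consequently your proposed cancellation criterion---vanishing of $\omega_1+\omega_2+\omega_3$ and $\omega_1^2+\omega_2^2+\omega_3^2$---is both unnecessary and false in general (take $\eta_3=\eta_3'=\eta_3''$, so all $\omega_i$ coincide and the sums equal $3\omega\ne0$). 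Your suggested ``$\mu_3$-orbit'' mechanism cannot work for this reason.

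The actual argument, as in the paper, is that the three characters decompose as
\[
\tfrac19 c_1(D^{\stable}_{A_2}+2D^{\unst}_{A_2}),\quad \tfrac19 c_1(D^{\stable}_{A_2}-D^{\unst}_{A_2}),\quad \tfrac19 c_1(D^{\stable}_{A_2}-D^{\unst}_{A_2}),
\]
plus terms that are already stable (including the $\cG_{\chi'},\cG_{\chi''}$-terms, with whatever cube-root coefficients arise). The coefficient of $D^{\unst}_{A_2}$ in the sum is $\tfrac19 c_1(2-1-1)=0$, and this numerical cancellation is entirely independent of $\eta_3,\eta_3',\eta_3''$. That is why the Proposition holds unconditionally, in contrast to the situation around the semisimple element $s$ in Theorem~\ref{stability-theorem-3x3}, where one \emph{is} working inside $\SL_3$ and Lemma~\ref{unstable-lemma} becomes the pinning-down tool.
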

\begin{proof}
From the discussion above, in the notation of \cite[Table~4]{DeBacker-Kazhdan-G2}, we see that for some explicitly computable\footnote{They are calculable via formulae in \cite{DeBacker-Kazhdan-G2}; for brevity we do not include them here.} constants $c_i$ and some cube roots of unity $\omega_i$ (uniquely determined by $\eta_3$, $\eta_3'$, and $\eta_3''$, respectively),
\begin{align*}
\Ch_{\pi(\eta_3)}&=\frac19c_1(D^\stable_{A_2}+2D^\unst_{A_2})+c_2(\omega_1 D^\stable_{(F_{A_2,\cG_{\chi'}})}+\omega_1^2 D^\stable_{(F_{A_2,\cG_{\chi''}})})-c_3D^\stable_{\tilde A_1}+c_4D^\stable_{\{e\}}\\
\Ch_{\pi_\supercusp(\eta_3')}&=\frac19c_1(D^\stable_{A_2}-D^\unst_{A_2})+c_2(\omega_2 D^\stable_{(F_{A_2,\cG_{\chi'}})}+\omega_2^2D^\stable_{(F_{A_2,\cG_{\chi''}})})\\
\Ch_{\pi_\supercusp(\eta_3'')^\vee}&=\frac19c_1(D^\stable_{A_2}-D^\unst_{A_2})+c_2(\omega_3 D^\stable_{(F_{A_2,\cG_{\chi'}})}+\omega_3^2D^\stable_{(F_{A_2,\cG_{\chi''}})})
\end{align*}
Thus, by \cite[Lemma~6.4.1]{DeBacker-Kazhdan-G2} the sum $\Ch_{\pi(\eta_3)}+\Ch_{\pi_\supercusp(\eta_3')}+\Ch_{\pi_\supercusp(\eta_3'')^\vee}$ is always stable.
\end{proof}

\subsection{Characters on a neighborhood of $s\in G_2$}\label{3x3-character-section-ss} Let $s\in G_2$ be order $3$ such that $\Cent_{G_2}(s)=\SL_3$. The same construction as in \S\ref{character-2x2-semisimple-section} gives rise to invariant distributions $\Theta_{\pi(\eta_3)}$, $\Theta_{\pi_\supercusp(\eta_3)}$, and $\Theta_{\pi_\supercusp(\eta_3)^\vee}$ on the topologically unipotent elements of $\SL_3$ such that they are represented by compatible locally constant functions (for each ramified cubic $\eta_3$). Similar calculations as in \S\ref{character-2x2-semisimple-section} gives:
\begin{theorem}\label{stability-theorem-3x3}
For ramified cubic characters $\eta_3$, $\eta_3'$, and $\eta_3''$, the sum $\Ch_{\pi(\eta_3)}+\Ch_{\pi_\supercusp(\eta_3')}+\Ch_{\pi_\supercusp(\eta_3'')^\vee}$ is stable in a neighborhood of $s$ if and only if $\eta_3=\eta_3'=\eta_3''$. Thus, $\{\pi(\eta_3),\pi_\supercusp(\eta_3),\pi_\supercusp(\eta_3)^{\vee}\}$ is an $L$-packet, for each ramified cubic character $\eta_3$.
\end{theorem}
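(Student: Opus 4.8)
The plan is to follow the strategy of Theorem~\ref{stability-theorem-2x2} essentially verbatim, replacing $\SO_4$ by $\SL_3$ and the single generalized Green function $\cG_{\sgn}$ by the pair $\cG_{\chi'},\cG_{\chi''}$. First I would invoke the Adler--Korman construction \cite[\S7]{Adler-Korman-LCE}, exactly as in \S\ref{character-2x2-semisimple-section}, to descend the distributions $\Ch_{\pi(\eta_3)}$, $\Ch_{\pi_\supercusp(\eta_3')}$, and $\Ch_{\pi_\supercusp(\eta_3'')^\vee}$ to compatible invariant distributions $\Theta$ supported on the topologically unipotent set of $\Cent_{G_2}(s)=\SL_3$, so that stability of the sum near $s$ becomes stability of $\Theta_{\pi(\eta_3)}+\Theta_{\pi_\supercusp(\eta_3')}+\Theta_{\pi_\supercusp(\eta_3'')^\vee}$ on $(\SL_3)_{0+}$.

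Next, starting from the Green-function expansions already recorded in \S\ref{3x3-character-section} and applying \cite[Thm~4.2]{Deligne-Lusztig}, I would evaluate each character at $su$ for $u$ topologically unipotent. As in the order-$2$ case, inserting the semisimple part $s$ only weights each Deligne--Lusztig term $R_{S_w}^\zeta$ by $\sum_{gsg^{-1}\in S_w}\zeta(gsg^{-1})$; since $s$ has order $3$ these weights are explicit powers of the primitive cube root of unity $\omega_s=\zeta(s)$. The terms built from $\mcQ_1^{F_{A_2}}$ and $\mcQ_{A_2}^{F_{A_2}}$ then reassemble into the (stable) character $\Ch_{\St_{\SL_3}}$, which I would subtract off, leaving only the genuinely delicate part.

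After this subtraction, each of the three descended distributions contributes a remainder of the shape $\omega_i\,D_{(F_{A_2},\cG_{\chi'})}+\omega_i^2\,D_{(F_{A_2},\cG_{\chi''})}$, where $\omega_i$ is the cube root of unity attached to the $i$-th character through the cubic analogue of Remark~\ref{omega-eta2-characterization} (with $\omega_3=\omega_s^{-1}$-twisted for the contragredient, since dualizing conjugates the Green-function coefficients). By Lemma~\ref{unstable-lemma} no nonzero combination of $D_{(F_{A_2},\cG_{\chi'})}$ and $D_{(F_{A_2},\cG_{\chi''})}$ is stable, so stability of the total sum is equivalent to the simultaneous vanishing of the coefficients of $\cG_{\chi'}$ and of $\cG_{\chi''}$. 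This is a linear system of two equations in the weighted roots $\omega_i$, and I would verify that its only solution is $\omega_1=\omega_2=\omega_3$, i.e. $\eta_3=\eta_3'=\eta_3''$. The $L$-packet assertion then follows by combining this uniqueness with Property~\ref{property:atomic-stability}, exactly as in Theorem~\ref{stability-theorem-2x2}.

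The main obstacle is the explicit Deligne--Lusztig bookkeeping in the second and third steps. One must pin down the cube-root weights $\sum_{gsg^{-1}\in S_w}\zeta(gsg^{-1})$ for every conjugacy class of tori $S_w$ in $\SL_3$ (there are more classes to track than in the $\SO_4$ computation of \S\ref{character-2x2-semisimple-section}), and then check that the two vanishing equations genuinely force all three $\omega_i$ to coincide rather than merely imposing the weaker relation $\omega_1+\omega_2+\omega_3=0$. Particular care is needed for the contragredient $\pi_\supercusp(\eta_3'')^\vee$, where the complex conjugation of the coefficients must be reconciled with the identity $\omega_\cusp^{\eta_3}=\omega_\cusp^{\eta_3^{-1}}$; getting the interaction of these signs right is precisely what distinguishes the correct packet $\{\pi(\eta_3),\pi_\supercusp(\eta_3),\pi_\supercusp(\eta_3)^\vee\}$ from the spurious mixed combinations.
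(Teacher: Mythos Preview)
Your proposal is correct and follows essentially the same logic as the paper: descend to $\SL_3$ near $s$, express the characters via (generalized) Green functions, and use Lemma~\ref{unstable-lemma} to conclude that stability forces the $\cG_{\chi'}$- and $\cG_{\chi''}$-coefficients to vanish, pinning down $\eta_3=\eta_3'=\eta_3''$.

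The paper's execution is more economical than your outline in one respect. Rather than unwinding the Deligne--Lusztig formula \cite[Thm~4.2]{Deligne-Lusztig} at the hyperspecial parahoric (as in the $\SO_4$ case) and tracking the cube-root weights $\sum_{gsg^{-1}\in S_w}\zeta(gsg^{-1})$ over all tori, the paper works only at the parahoric $F_{A_2}$, where all three representations have nonzero invariants, and simply reads off from the $\SL_3(\F_q)$ character tables of \cite[Table~1b]{simpson-frame} that
\[
(\omega_\princ^{\eta_3}+\omega_\cusp^{\eta_3}+(\omega_\cusp^{\eta_3})^\vee)(su)=\mcQ_{1}^{F_{A_2}}(u)+2\mcQ_{A_2}^{F_{A_2}}(u),
\]
with no $\cG_{\chi'}$ or $\cG_{\chi''}$ contribution, and that this is the \emph{unique} such combination. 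This bypasses both of the obstacles you flag: the torus bookkeeping and the worry that the two vanishing conditions might only force $\omega_1+\omega_2+\omega_3=0$. Your route would also work, but the direct appeal to the known $\SL_3$ character table is cleaner.
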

\begin{proof}
By Lemma~\ref{unstable-lemma} (together with \cite[Lemma~6.4.1]{DeBacker-Kazhdan-G2}), a character on the topologically unipotent locus $(\SL_3(F))_{0+}$ in $\SL_3(F)$ is stable if and only if it is in the span of semisimple orbital integrals. By \cite[Table~1b]{simpson-frame}, for $u\in H_\alpha/H_{\alpha+}$ unipotent, we have
\[
(\omega_\princ^{\eta_3}+\omega_\cusp^{\eta_3}+(\omega_\cusp^{\eta_3})^\vee)(su)=\mcQ_{1}^{F_{A_2}}(u)+2\mcQ_{A_2}^{F_{A_2}}(u),
\]
which is the only linear combination of $\omega_\princ^{\eta_3}$, $\omega_\cusp^{\eta_3}$, and $(\omega_\cusp^{\eta_3})^\vee$ for which the generalized Green functions $\cG_{\chi'}$ and $\cG_{\chi''}$ do not appear. Thus, by \cite[Lemma~5.2.10]{DeBacker-Kazhdan-G2}, the sum $\Ch_{\pi(\eta_3)}+\Ch_{\pi_\supercusp(\eta_3)}+\Ch_{\pi_\supercusp(\eta_3)^\vee}$ is the only stable combination.
\end{proof}

In fact:

\begin{theorem}\label{stability-theorem}
For a ramified cubic character $\eta_3$, the sum $\Ch_{\pi(\eta_3)}+\Ch_{\pi_\supercusp(\eta_3)}+\Ch_{\pi_\supercusp(\eta_3)^\vee}$ is stable. Similarly, for a ramified quadratic character $\eta_2$, the sum $\Ch_{\pi(\eta_2)}+\Ch_{\pi_\supercusp(\eta_2)}$ is stable.
\end{theorem}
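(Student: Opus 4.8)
The plan is to upgrade the \emph{local} stability near the distinguished semisimple element $s$---already established in Theorem~\ref{stability-theorem-2x2} for the quadratic packet and in Theorem~\ref{stability-theorem-3x3} for the cubic packet---to \emph{global} stability, using the abstract existence statement of Property~\ref{property:atomic-stability} as the bridge. The key observation is that Property~\ref{property:atomic-stability} already produces \emph{some} globally stable combination $\sum_{\pi\in\Pi_\varphi}\dim(\rho_\pi)\Ch_\pi$ on each of these $L$-packets; the explicit character computations then have just enough rigidity to force that combination to be a scalar multiple of the plain sum, which is exactly what is claimed.

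Concretely, for the size-$3$ packet $\Pi_\varphi=\{\pi(\eta_3),\pi_\supercusp(\eta_3),\pi_\supercusp(\eta_3)^\vee\}$---an $L$-packet by Theorem~\ref{stability-theorem-3x3}---Property~\ref{property:atomic-stability} furnishes a globally stable distribution $D=\sum_i\dim(\rho_i)\Ch_{\pi_i}$ with every $\dim(\rho_i)\geq1$. Since $D$ is globally stable it is in particular stable in a neighborhood of $s$, so I would feed $D$ into the descent computation of \S\ref{3x3-character-section-ss}. There the coefficients of the unstable generalized Green functions $\cG_{\chi'},\cG_{\chi''}$ attached to the three summands impose two independent linear conditions on $(\dim(\rho_1),\dim(\rho_2),\dim(\rho_3))$ whose only solution up to scale is $\dim(\rho_1)=\dim(\rho_2)=\dim(\rho_3)$---this is precisely the one-dimensionality of the stable subspace recorded in Theorem~\ref{stability-theorem-3x3}. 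Hence $D$ is proportional to $\Ch_{\pi(\eta_3)}+\Ch_{\pi_\supercusp(\eta_3)}+\Ch_{\pi_\supercusp(\eta_3)^\vee}$, and dividing by the nonzero common value yields the stability of the plain sum. The quadratic case runs identically, with $\Cent_{G_2}(s)=\SO_4$, a single unstable term $\cG_{\sgn}$, and the opposing signs $(-1)^{(q-1)/2}$, $(-1)^{(q+1)/2}$ attached in \S\ref{character-2x2-semisimple-section} to $\Ch_{\pi(\eta_2)}$ and $\Ch_{\pi_\supercusp(\eta_2)}$ forcing the two multiplicities to coincide.

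The hardest part is the local-to-global passage itself: the formulas of \S\ref{section: Green-fxn-pi-eta2} through \S\ref{3x3-character-section-ss} only control the Harish-Chandra characters near $1$ and near $s$, whereas stability is a condition at \emph{every} semisimple point of $G_2$. A direct attack would require descending at each remaining class---the regular semisimple ones, where every invariant distribution is automatically stable, together with the intermediate classes whose centralizer is a proper Levi such as $\GL_2$---and verifying vanishing or automatic stability case by case. I would instead sidestep this entirely by leaning on Property~\ref{property:atomic-stability}, which supplies global stability of one combination \emph{a priori}; the only remaining task is to confirm that the rigidity at the single most degenerate class $s$ pins that combination down to the line spanned by the sum. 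The subtle point is that the coefficient comparison must force genuine \emph{equality} of the multiplicities $\dim(\rho_i)$, rather than mere proportionality of the unstable parts---and this is guaranteed precisely because Theorems~\ref{stability-theorem-2x2} and~\ref{stability-theorem-3x3} establish that the stable-near-$s$ subspace is exactly one-dimensional.
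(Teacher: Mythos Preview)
Your approach and the paper's diverge at exactly the crux you yourself flag---the local-to-global passage---and the shortcut you propose is circular. The paper does \emph{not} lean on Property~\ref{property:atomic-stability} as an established input; rather, Theorem~\ref{stability-theorem} \emph{is} the verification of Property~\ref{property:atomic-stability} for these packets, and the paper's proof ends with ``we conclude full stability, i.e., Property~\ref{property:atomic-stability}.'' Throughout the paper, Property~\ref{property:atomic-stability} functions as a desideratum that a candidate LLC must satisfy: it is invoked in Theorems~\ref{stability-theorem-2x2} and~\ref{stability-theorem-3x3} only to argue that \emph{if} the true packet has a stable character sum, \emph{then} it must be the one singled out by the computation near $s$. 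The actual stability of that sum is then established independently, from scratch, in Theorem~\ref{stability-theorem}. Feeding Property~\ref{property:atomic-stability} back in as a black box to prove Theorem~\ref{stability-theorem} assumes what is to be shown.

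The paper's actual argument is the direct one you hoped to avoid. It extends the character computations of \S\ref{section: Green-fxn-pi-eta2}--\S\ref{3x3-character-section-ss} from the two distinguished classes $1$ and $s$ to \emph{every} topologically semisimple $\gamma\in G_2$; the remaining centralizers are proper Levi subgroups or tori, so these descents are, in the paper's words, ``similar (but easier).'' This yields stability of the sum on all compact elements. The extension from compact elements to all of $G_2$ is then supplied by Casselman's theorem \cite[Theorem~5.2]{casselman}, via an argument modeled on \cite[Lemma~9.3.1]{DeBacker-Reeder}. Your rigidity-at-$s$ argument would be a legitimate alternative only if stability for these particular singular, mixed packets were already known by some independent route---but no such result is available, which is precisely why the theorem is being proved.
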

\begin{proof}
We have calculated distributions $\Ch_{\pi(\eta_3)}$, $\Ch_{\pi_\supercusp(\eta_3)}$, and $\Ch_{\pi_\supercusp(\eta_3)^\vee}$ (resp., $\Ch_{\pi(\eta_2)}$ and $\Ch_{\pi_\supercusp(\eta_2)}$) on topologically unipotent neighborhoods of $1$ and $s$. A similar (but easier) calculation gives explicit formulae for the distributions on neighborhoods of other (thus arbitrary) topologically semisimple elements $\gamma\in G_2$.

These calculations are enough to prove stability of the characters of $\Ch_{\pi(\eta_2)}+\Ch_{\pi_\supercusp(\eta_2)}$ and $\Ch_{\pi(\eta_3)}+\Ch_{\pi_\supercusp(\eta_3)}+\Ch_{\pi_\supercusp(\eta_3)^\vee}$ on compact elements. By \cite[Theorem~5.2]{casselman} (by an argument similar to \cite[Lemma~9.3.1]{DeBacker-Reeder}), we conclude full stability, i.e.~Property~\ref{property:atomic-stability}.
\end{proof}

\appendix

\section{Character Table of $\SO_4(\F_q)$}
\addtocontents{toc}{\protect\setcounter{tocdepth}{1}}

\subsection{Classifying conjugacy classes in $\SO_4(\F_q)$}
We introduce the following notation:
\begin{itemize}
    \item $c_1(x)=\begin{pmatrix}x\\&x\end{pmatrix}$ where $x\in\F_q^\times$
    \item $c_2(x,\gamma)=\begin{pmatrix}x&\gamma\\&x\end{pmatrix}$ where $x\in\F_q^\times$ and $\gamma\ne0\in\F_q^\times$. When $\gamma=1$ let $c_2(x):=c_2(x,1)$
    \item $c_3(x,y)=\begin{pmatrix}x\\&y\end{pmatrix}$ where $x\ne y\in\F_q^\times$. When $xy=1$ let $c_3(x):=c_3(x,x^{-1})$, where $x\ne\pm1$.
    \item $c_4(z)$ for the matrix with eigenvalues $z$ and $z^q$, for $z\in\F_{q^2}\backslash\F_q$.
\end{itemize}

Moreover, choose and element $\Delta\in\F_q^\times\setminus(\F_q^\times)^2$ and an element $\alpha\in\F_{q^2}^\times$ such that $\alpha^{q-1}=-1$, a choice of which is unique up to scaling by $\F_q^\times$.

\begin{lemma}
Let $q$ be odd. The conjugacy classes in $\SO_4(\F_q)$ are one of:
\begin{enumerate}
    \item $c_1(1)\times c_1(\pm1)$. There are $2$ such conjugacy classes.
    \item $c_1(1)\times c_2(\pm1)$. There are $2$ such conjugacy classes.
    \item $c_1(1)\times c_3(x_2)$ for $x_2\ne\pm1\in\F_{q}^\times$. Since $c_3(x_2)=c_3(x_2^{-1})$ in $\SL_2(\F_q)$, there are $(q-3)/2$ such conjugacy classes.
    \item $c_1(1)\times c_4(z_2)$ for $z_2\in\F_{q^2}\backslash\F_q$ such that $z_2^{q+1}=1$. Since $c_4(z_2)=c_4(z_2^{-1})$ in $\SL_2(\F_q)$ there are $(q-1)/2$ such conjugacy classes.
    \item $c_2(\pm1)\times c_1(1)=c_2(1)\times c_1(\pm1)$. There are $2$ such conjugacy classes.
    \item $c_2(1)\times c_2(\pm1,\gamma_2)$ for $\gamma_2\in\{1,\Delta\}$. There are $4$ such conjugacy classes.
    \item $c_2(1)\times c_3(x_2)$ for $x_2\ne\pm1\in\F_q^\times$. Since $c_3(x_2)=c_3(x_2^{-1})$ in $\SL_2(\F_q)$, there are $(q-3)/2$ such conjugacy classes.
    \item $c_2(1)\times c_4(z_2)$ for $z_2\in\F_{q^2}\backslash\F_q$ with $z_2^{q+1}=1$. Since $c_4(z_2)=c_4(z_2^{-1})$ there are $(q-1)/2$ such conjugacy classes.
    \item $c_3(x_1)\times c_1(1)$ for $x_1\ne\pm1\in\F_q^\times$. Since $c_3(x_1)=c_3(x_1^{-1})$ in $\GL_2(\F_q)$ there are $(q-3)/2$ such conjugacy classes.
    \item $c_3(x_1)\times c_2(1)$ for $x_1\ne\pm1\in\F_q^\times$. Since $c_3(x_1)=c_3(x_1^{-1})$ in $\SL_2(\F_q)$ there are $(q-3)/2$ such conjugacy classes. 
    \item $c_3\times c_3$. 
    There are the following cases:
    \begin{enumerate}
        \item $c_3(x_1)\times c_3(x_2)$ where $x_1^2\ne-1$ or $x_2^2\ne-1$, then since $c_3(x_1)=c_3(x_1^{-1})$ and $c_3(x_2)=c_3(x_2^{-1})$ in $\SL_2(\F_q)$, and $c_3(x_1)\times c_3(x_2)=c_3(-x_1)\times c_3(-x_2)$ there are
    \[\begin{cases}
    \frac{(q-3)^2-4}8&q\equiv1\pmod4\\
    \frac{(q-3)^2}8&q\equiv-1\pmod4
    \end{cases}\]
    such conjugacy classes.
    \item $c_3(x_1,\Delta x_1^{-1})\times c_3(x_2,\Delta x_2^{-1})$ where $x_1,x_2\in\F_q^\times$ and $x_1^2\ne-\Delta$ or $x_2^2\ne-\Delta$. Since $c_3(x_1,\Delta x_1^{-1})=c_3(\Delta x_1^{-1},x_1)$ and $c_3(x_2)=c_3(\Delta x_2^{-1})$ in $\SL_2(\F_q)$ there are
    \[\begin{cases}
    \frac{(q-1)^2}8&q\equiv1\pmod4\\
    \frac{(q-1)^2-4}8&q\equiv-1\pmod4
    \end{cases}\] such conjugacy classes.
    \item $c_3(-1,1)\times c_3(-1,1)$. There is one such conjugacy class.
    \end{enumerate}
    \item $c_3\times c_4$. There are the following cases:
    \begin{itemize}
        \item $c_3(x_1)\times c_4(z_2)$ for $x_1\in\F_q^\times$ and $z\in\F_{q^2}\backslash\F_q$ such that $z_2^{q+1}=1$. 
    \item $c_3(x_1,\Delta x_1^{-1})\times c_4(z_2)$ for $x_1\in\F_q^\times$ and $z_2\in\F_{q^2}$ such that $z_2^{q+1}=\Delta$. Since $c_3(x_1,\Delta x_1^{-1})=c_3(\Delta x_1^{-1},x_1)$ and $c_4(z_2)=c_4(\Delta z_2^{-1})$, there are
    \[
    \begin{cases}
    \frac{q^2-1}4&q\equiv1\pmod4\\
    \frac{(q-1)(q+3)}4&q\equiv-1\pmod4
    \end{cases}
    \]
    such conjugacy classes.
    \end{itemize}

    \item $c_4(z_1)\times c_1(1)$ for $z_1\in\F_{q^2}^1\backslash\{\pm1\}$. There are $(q-1)/2$ such conjugacy classes. 
    \item $c_4(z_1)\times c_2(1)$ for $x,y\in\F_q^\times$ and $z_1\in\F_{q^2}$ with $z_1^{q+1}=1$. There are $(q-1)/2$ such conjugacy classes.
    \item $c_4(z_1)\times c_3(x_2)$ for $x_2\ne\pm1\in\F_q^\times$ and $z_1\in\F_{q^2}\backslash\F_q$ such that $z_1^{q+1}=1$. There are $(q-1)(q-3)/4$ such conjugacy classes.
    \item $c_4(z_1)\times c_3(x_2,\Delta x_2^{-1})$ for $x_2\in\F_q^\times$ and $z_1\in\F_{q^2}^\times$ such that $z_1^{q+1}=\Delta$. There are 
    \[
    \begin{cases}
    \frac{q^2-1}4&q\equiv1\pmod4\\
    \frac{(q-1)(q+3)}4&q\equiv-1\pmod4
    \end{cases}
    \]
    such conjugacy classes.
    \item $c_4(z_1)\times c_4(z_2)$ for $z_1,z_2\in\F_{q^2}\backslash\F_q$ with $(z_1z_2)^{q+1}=1$ and $z_1^{q-1}\ne-1$ or $z_2^{q-1}\ne-1$. The since $c_4(z_1)\times c_4(z_2)=c_4(az_1)\times c_4(az_2)$ for any $a\in\F_q^\times$, and $c_4(z_1)=c_4(z_1^q)$ and $c_4(z_2)=c_4(z_2^q)$ in $\SL_2(\F_q)$.
    \item $c_4(\alpha)\times c_4(\alpha^{-1})$. There is a unique such conjugacy class.
\end{enumerate}
\end{lemma}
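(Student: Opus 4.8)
The plan is to translate the classification into a combinatorial problem about pairs of $\GL_2(\F_q)$-conjugacy classes. Writing an element of $\SO_4(\F_q)$ as a class $[g,h]$ with $g,h\in\GL_2(\F_q)$, $\det g=\det h$, modulo the scaling $[g,h]=[ag,ah]$, I would first analyze the conjugation action: conjugating by $[g_0,h_0]$ sends $[g,h]$ to $[g_0gg_0^{-1},h_0hh_0^{-1}]$, so it acts on the two factors by independent $\GL_2(\F_q)$-conjugations, subject only to the constraint $\det g_0=\det h_0$. Since the determinant of $\Cent_{\GL_2}(g)$ is all of $\F_q^\times$ unless $g$ is a nonsemisimple (Jordan) element of type $c_2$, in which case it is $(\F_q^\times)^2$, this constraint is vacuous whenever at least one of $g,h$ is not of type $c_2$. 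Consequently, away from the $c_2\times c_2$ case, $[g,h]$ and $[g',h']$ are conjugate if and only if there is $a\in\F_q^\times$ with $g'\sim ag$ and $h'\sim ah$ as $\GL_2(\F_q)$-classes; in the $c_2\times c_2$ case the surviving constraint forces the two Jordan parameters to be compared only up to squares, which is exactly what produces the $\gamma_2\in\{1,\Delta\}$ distinction in case~(6).

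Next I would record the basic invariant. Scaling by $a$ multiplies the common determinant $d:=\det g=\det h$ by $a^2$, so the class of $d$ in $\F_q^\times/(\F_q^\times)^2$ is a conjugacy invariant and every class admits a representative with $d\in\{1,\Delta\}$. Having fixed such a $d$, the residual scalings are precisely $a=\pm1$, so the problem becomes: enumerate pairs $(C,D)$ of $\GL_2(\F_q)$-classes with $\det C=\det D=d$ for $d\in\{1,\Delta\}$, modulo the involution $(C,D)\mapsto(-C,-D)$. I would then recall the four types of $\GL_2(\F_q)$-classes, namely $c_1(x)$, $c_2(x)$, $c_3(x,y)$, and $c_4(z)$, with determinants $x^2,x^2,xy,\Nm(z)=z^{q+1}$ and the identifications $c_3(x,y)=c_3(y,x)$, $c_4(z)=c_4(z^q)$; note that $c_1,c_2$ realize only square determinants, which is why every pair involving such a factor has $d=1$ (cases (1)--(10)).

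The bulk of the proof is a type-by-type orbit count over the sixteen ordered pairs of types. For each I would list the relevant classes of determinant $d$, form all matching pairs, and pass to the quotient by the $\pm1$-involution via Burnside's lemma, so that the number of classes equals half the number of pairs plus half the number of $\pm$-fixed pairs. A pair is fixed exactly when $-C=C$ and $-D=D$; here $-c_3(x,x^{-1})=c_3(x,x^{-1})$ iff $x^2=-1$, $-c_3(x,\Delta x^{-1})=c_3(x,\Delta x^{-1})$ iff $x^2=-\Delta$, and $-c_4(z)=c_4(z)$ iff $z^{q-1}=-1$. These fixed configurations are precisely the separately listed singleton classes such as $c_3(-1,1)\times c_3(-1,1)$ in (11c) and $c_4(\alpha)\times c_4(\alpha^{-1})$ in (18), where $\alpha^{q-1}=-1$.

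The main obstacle is the bookkeeping in the three cases where both factors are regular, namely $c_3\times c_3$, $c_3\times c_4$, and $c_4\times c_4$. There the number of $\pm$-fixed pairs depends on whether the equations $x^2=-1$, $x^2=-\Delta$, and $z^{q-1}=-1$ (with the prescribed norm) are solvable, which is governed by whether $-1$ is a square, i.e.\ by $q\bmod 4$; this is the source of the $\pm4$ corrections and of the $q\equiv\pm1\pmod4$ dichotomy in the stated counts. I would handle it by separating the norm-$1$ and norm-$\Delta$ strata of $\F_{q^2}^\times$, counting the relevant finite sets of eigenvalue data directly, and in each case subtracting off the (at most one) $\pm$-fixed pair before dividing by two. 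Once the fixed-point contributions are pinned down, every remaining count is an elementary enumeration, and assembling the sixteen type-pairs yields the list in the statement.
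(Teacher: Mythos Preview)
The paper states this lemma without proof; it is recorded in the appendix as a reference classification and the text moves directly to the next subsection. Your proposal is a correct and complete strategy, and in fact the one piece of reasoning the paper does supply --- the observation that for $(s,t)\in\GL_{2,2}(\F_q)$ the equation $(sgs^{-1},tht^{-1})=a(g,h)$ forces $a=\pm1$ by comparing determinants --- appears verbatim in the following subsection on centralizers, so your reduction to the $\pm1$-involution on determinant-matched pairs of $\GL_2$-classes is exactly the paper's implicit viewpoint.

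Two small points worth making explicit when you write it up. First, your assertion that the determinant constraint on the conjugator is vacuous outside the $c_2\times c_2$ case relies on $\det\Cent_{\GL_2}(g)=\F_q^\times$ for $g$ of type $c_1,c_3,c_4$; for $c_4$ this is surjectivity of the norm $\F_{q^2}^\times\to\F_q^\times$, which is standard but should be said. Second, in the $c_2\times c_2$ case the invariant is genuinely the square class of the ratio $\gamma_1/\gamma_2$, not of each $\gamma_i$ separately; your sentence ``the two Jordan parameters to be compared only up to squares'' is slightly ambiguous on this, and it is precisely this ratio that produces the $\gamma_2\in\{1,\Delta\}$ after normalizing $\gamma_1=1$.
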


\subsection{Classifying representations in $\SO_4(\F_q)$}

Let $\GL_{2,2}(\F_q):=\{(g,h)\in\GL_2(\F_q)\times\GL_2(\F_q):\det(g)=\det(h)\}$. Then there is an isomorphism $\SO_4(\F_q)\cong\GL_{2,2}(\F_q)/\F_q^\times$. Let $\mathbb T$ denote the split maximal torus of $\GL_2(\F_q)$.

Now, the centralizer of a semisimple element $(g,h)\in\GL_{2,2}(\F_q)$ in $\SO_4(\F_q)$ is
\begin{align*}
\Cent_{\SO_4(\F_q)}(g,h)&=\{(s,t)\in\GL_{2,2}(\F_q):(sgs^{-1},tht^{-1})=a(g,h) \text{ for some }a\in\F_q^\times\}/\F_q^\times\\
&=\{(s,t)\in\GL_{2,2}(\F_q):(sgs^{-1},tht^{-1})=\pm(g,h)\}/\F_q^\times,
\end{align*}
where the last equality is by observing $\det(g)=\det(sgs^{-1})=\det(ag)=a^2\det(g)$, so $a=\pm1$. Thus, the centralizer depends on whether $-g$ is conjugate to $g$ and whether $-h$ is conjugate to $h$ under $\GL_2(\F_q)$.

The conjugacy classes of semisimple elements $s=(g,h)$ of $\SO_4(\F_q)$ fall into one of the following possibilities:

\begin{enumerate}
    \item $c_1(1)\times c_1(1)$, then $\Cent_{\SO_4}(s)=\SO_4(\F_q)$. Since unipotent representations are independent of isogenies by \cite[Prop~7.10]{Deligne-Lusztig} we have
        \[
        \mathcal E(\SO_4(\F_q),1)\cong\mathcal E(\PGL_2(\F_q)\times\PGL_2(\F_q),1)=\{1\boxtimes1,1\boxtimes\St_{\PGL_2},\St_{\PGL_2}\boxtimes1,\St_{\PGL_2}\boxtimes\St_{\PGL_2}\}.
        \] 
        
        The representation $1_{\PGL_2}\boxtimes1_{\PGL_2}$ corresponds to the representation $1_{\SO_4}$ and $\St_{\PGL_2}\boxtimes\St_{\PGL_2}$ corresponds to the representation $\St_{\SO_4}$. There are $4$ such representations.
    \item $c_1(1)\times c_1(-1)$, then again $\Cent_{\SO_4}(s)=\SO_4(\F_q)$. The representations in $\mathcal E(\SO_4,s)$ are of the form $\pi\otimes\zeta$ where $\pi\in\mathcal E(\SO_4,1)$ and $\zeta(g,h):=\epsilon(\det(g))$ is the unique order $2$ character of $\SO_4(\F_q)$. There are $4$ such representations.
    \item $c_1(1)\times c_3(x_2)$ for $x_2\ne\pm1\in\F_q^\times$, then $\Cent_{\SO_4}(s)=(\GL_2(\F_q)\times\mathbb T)^1/\F_q^\times\cong\GL_2(\F_q)$. Here, $\GL_2(\F_q)$ has two unipotent representations, $1$ and the Steinberg $\St_{\GL_2(\F_q)}$, of dimensions $1$ and $q$, respectively.
        
        Letting $\mathbb P=(\GL_2\times\mathbb B)^1/\F_q^\times\subset\SO_4(\F_q)$ be the parabolic subgroup with Levi $(\GL_2(\F_q)\times\mathbb T)^1/\F_q^\times$, the representations correspond to $\Ind_{\mathbb P}^{\SO_4}(\chi1_{\GL_2})$ and $\Ind_{\mathbb P}^{\SO_4}(\chi\St_{\GL_2})$, for a character $\chi$ of $\F_q^\times$ with $\chi^2\ne1$.
        
        Note that these are irreducible since the Weyl group action replaces $\chi$ with $\chi^{-1}$. There are a total of $2\cdot(q-3)/2=q-3$ representations.
    \item $c_1(1)\times c_4(z_2)$ then $\Cent_{\SO_4}(s)=(\GL_2(\F_q)\times R_{\F_{q^2}/\F_q}\G_m)^1/\F_q^\times$. This has two cuspidal unipotents, $1_{\PGL_2}$ and $\St_{\PGL_2}$, inflated via $(\GL_2(\F_q)\times R_{\F_{q^2}/\F_q}\G_m)^1/\F_q^\times\to\PGL_2(\F_q)$.
        
        They correspond to representations $1_{\GL_2}\boxtimes\rho_\theta$ of $\GL_2\times\GL_2$, restricted to $\GL_{2,2}$ and factored through $\SO_4$. Here, $\theta$ is a regular character of $\F_{q^2}^\times$ with $\theta|_{\F_q^\times}=1$.
    \item $c_3(x_1,y_1)\times c_3(x_2,y_2)$ for $x_1\ne \pm y_1,x_2\ne\pm y_2\in\F_q^\times$ then $\Cent_{\SO_4}(s)=(\mathbb T\times\mathbb T)^1/\F_q^\times$, the maximal split torus of $\SO_4(\F_q)$. This has a unique unipotent, $1$.
        
        They correspond to induced representations $\Ind_{\mathbb B}^{\SO_4}(\chi_1\otimes\chi_2\otimes\chi_3\otimes\chi_4)$, where $\mathbb B$ is the split Borel subgroup of $\SO_4(\F_q)$, where $\chi_i$ are characters of $\F_q^\times$ with $\chi_1\chi_2\chi_3\chi_4=1$ and $\chi_1^2\ne\chi_2^2$ and $\chi_3^2\ne\chi_4^2$. Here, \[\chi_1\otimes\chi_2\otimes\chi_3\otimes\chi_4(\begin{pmatrix}a'\\&b'\end{pmatrix},\begin{pmatrix}c'\\&d'\end{pmatrix}):=\chi_1(a')\chi_2(b')\chi_3(c')\chi_4(d').\]
        These representations are irreducible since the Weyl group acts by swapping $\chi_1$ with $\chi_2$, and swapping $\chi_3$ with $\chi_4$. 
        The number of such representations is:
        \[
        \begin{cases}
        (q+1)^2+4&q\equiv1\pmod4\\
        (q+1)^2&q\equiv3\pmod4.
        \end{cases}
        \]
    \item $c_3(1,-1)\times c_3(1,-1)$. This has two unipotents, $1$ and $\sgn$.
        
        These are the irreducible components of the length $2$ representation $\Ind_{\mathbb B}^{\SO_4}(1\otimes\epsilon\otimes1\otimes\epsilon)$, where $\epsilon$ is the unique order $2$ character of $\F_q^\times$ and $\chi_1^2\chi_2^2=1$. Explicitly, they are induced representations from the index $2$ subgroup $\SL_2(\F_q)\times\SL_2(\F_q)/\pm1\subset \SO_4(\F_q)$:
        \[
        \omega_{\mathrm{princ}}^+:=\Ind_{(\SL_2\times\SL_2)/\pm1}^{\SO_4}(\omega_e^+\boxtimes\omega_e^+),\omega_{\mathrm{princ}}^-:=\Ind_{(\SL_2\times\SL_2)/\mu_2}^{\SO_4}(\omega_e^+\boxtimes\omega_e^-),
        \]
        in the notation of Remark~\ref{sl2-reducible-principal}. In particular, the restriction to $\SL_2(\F_q)\times\SL_2(\F_q)/\pm1$ is $\omega_e^+\boxtimes\omega_e^+\oplus\omega_e^-\boxtimes\omega_e^-$ and $\omega_e^+\boxtimes\omega_e^-\oplus\omega_e^-\boxtimes\omega_e^+$, respectively.
    \item $c_3(x_1,y_1)\times c_4(z_2)$ where $x_1,y_1\in\F_q^\times$ and $z_2\in\F_{q^2}\backslash\F_q$ with $x_1y_1=z_2^{q+1}$. Then $\Cent_{\SO_4}(s)=(\mathbb T\times R_{\F_{q^2}/\F_q}\G_m)^1/\F_q^\times$. This has a unique unipotent, $1$.
        
        Let $\mathbb P=(\mathbb B\times\GL_2)^1/\F_q^\times\subset\SO_4(\F_q)$ be the parabolic subgroup with Levi $(\mathbb T\times\GL_2(\F_q))^1/\F_q^\times\cong\GL_2(\F_q)$. These are the induced representations $\Ind_{\mathbb B}^{\GL_2}(\chi_1\boxtimes\chi_2)\boxtimes\rho_\theta$ of $\GL_2(\F_q)\times\GL_2(\F_q)$, restricted to $\GL_{2,2}$ and factored through $\SO_4$. Here, $\chi_1$ and $\chi_2$ are characters of $\F_q^\times$ with $\chi_1^2\ne\chi_2^2$ and $\theta$ is a regular character of $\F_{q^2}^\times$, where $\chi_1\chi_2\theta|_{\F_q^\times}=1$. 
    \item $c_4(z_1)\times c_4(z_2)$ where $z_1^{q+1}=z_2^{q+1}$ and $z_1^{q-1}\ne-1$ or $z_2^{q-1}\ne-1$. Here. $\Cent_{\SO_4}(s)=(R_{\F_{q^2}/\F_q}\G_m\times R_{\F_{q^2}/\F_q}\G_m)^1/\F_q^\times$. This has a unique unipotent, $1$.
    
    They correspond to representations $\rho_{\theta_1}\boxtimes\rho_{\theta_2}$ of $\GL_2(\F_q)\times\GL_2(\F_q)$, restricted to $\GL_{2,2}(\F_q)$ and inflated to $\SO_4(\F_q)$. Here, $\theta_1\theta_2|_{\F_q^\times}=1$ and $\theta_1^2$ or $\theta_2^2$ is nontrivial on $\F_{q^2}^1$.
    \item $c_4(\alpha)\times c_4(\alpha^{-1})$. Here $\Cent_{\SO_4}(s)=(R_{\F_{q^2}/\F_q}\G_m\times R_{\F_{q^2}/\F_q}\G_m)^1/\F_q^\times\rtimes\mu_2$. This has two unipotents, $1$ and $\sgn$.
    
    They correspond to the two induced representations 
    \begin{equation}\omega_{\mathrm{cusp}}^+:=\Ind_{\SL_2\times\SL_2/\pm1}^{\SO_4}(\omega_{0}^+\boxtimes\omega_0^+)\quad\text{and}\quad \omega_{\mathrm{cusp}}^-:=\Ind_{\SL_2\times\SL_2/\pm1}^{\SO_4}(\omega_0^+\boxtimes\omega_0^-),
    \end{equation}
    using the notation of Remark~\ref{sl2-reducible-cuspidal}.
\end{enumerate}

\begin{remark}\label{gl2-steinberg-character}
The Steinberg representation of $\GL_2(\F_q)$ has character values:
\begin{center}
\begin{tabular}{ |c|c|c|c| } 
 \hline
$c_1(x)$ & $q$\\ 
 $c_2(x)$ & $0$ \\ 
 $c_3(x,y)$ & $1$ \\ 
 $c_4(z)$&$-1$\\
 \hline
\end{tabular}
\end{center}
\end{remark}

\begin{remark}\label{sl2-reducible-principal}
The principal series representation $\Ind_{\mathbb B}^{\SL_2}(\epsilon\otimes1)$ of $\SL_2(\F_q)$ has length two, and splits as $\omega_e^+\oplus\omega_e^-$, where as usual $\epsilon\ne1$ is the unique order $2$ character of $\F_q^\times$. The character tables are:
\begin{center}
\begin{tabular}{ |c||c| c|} 
 \hline
 &$\omega_e^+$&$\omega_e^-$\\
 \hline
$I_2$ & $\frac{q+1}2$&$\frac{q+1}2$\\ 
 $-I_2$ & $\frac{q+1}2\epsilon(-1)$ &$\frac{q+1}2\epsilon(-1)$\\ 
 $c_2(\pm1,\gamma),\gamma\in\{1,\Delta\}$ & $\frac12(\epsilon(\pm1)+\epsilon(\gamma)\sqrt{\epsilon(-1)q})$&$\frac12(\epsilon(\pm1)-\epsilon(\gamma)\sqrt{\epsilon(-1)q})$ \\ 
 $c_3(x)$&$\epsilon(x)$&$\epsilon(x)$\\
 $c_4(z),z^{q+1}=1$&$0$&$0$\\
 \hline
\end{tabular}
\end{center}
\end{remark}

\begin{remark}\label{sl2-reducible-cuspidal}
Let $\theta_0\ne1$ be the unique order $2$ character of $\F_{q^2}^1$, so the restriction of the cuspidal representation $\rho_{\theta_0}$ of $\GL_2(\F_q)$, restricted to $\SL_2(\F_q)$, splits as $\omega_0^+\oplus\omega_0^-$. The character tables are:
\begin{center}
\begin{tabular}{ |c||c| c|} 
 \hline
 &$\omega_0^+$&$\omega_0^-$\\
 \hline
$I_2$ & $\frac{q-1}2$&$\frac{q-1}2$\\ 
 $-I_2$ & $-\frac{q-1}2\epsilon(-1)$ &$-\frac{q-1}2\epsilon(-1)$\\ 
 $c_2(\pm1,\gamma),\gamma\in\{1,\Delta\}$ & $\pm\frac12(-\epsilon(\pm1)+\epsilon(\gamma)\sqrt{\epsilon(-1)q})$&$\pm\frac12(-\epsilon(\pm1)-\epsilon(\gamma)\sqrt{\epsilon(-1)q})$ \\ 
 $c_3(x)$&$0$&$0$\\
 $c_4(z),z\in\F_{q^2}^1$&$-\theta_0(z)$&$-\theta_0(z)$\\
 \hline
\end{tabular}
\end{center}
\end{remark}

Now, we can calculate the character table for $\SO_4(\F_q)$. Here, we ignore twists of representations by outer automorphisms (coming from $\SO_4\subset\tO_4$), which swaps the two $\GL_2$-factors:

\begin{landscape}\pagestyle{plain}
\adjustbox{scale=0.7,center}{
\begin{tabular}{ |c||c|c|c|c|c|c|c|c| }
 \hline
 \multicolumn{9}{|c|}{Representations of $\SO_4(\F_q)$, cases 1-3} \\
 \hline
 &$1_{\SO_4}$&$\zeta$&$1_{\PGL_2}\boxtimes\St_{\PGL_2}$&$(1_{\PGL_2}\boxtimes\St_{\PGL_2})\otimes\zeta$&$\St_{\SO_4}$&$\St_{\SO_4}\otimes\zeta$&$\Ind_{\mathbb P}^{\SO_4}(\chi1_{\GL_2})$&$\Ind_{\mathbb P}^{\SO_4}(\chi\St_{\GL_2})$\\
 \hline
 $c_1(1)\times c_1(\pm1)$  & $1$&$1$&$q$&$q$&$q^2$&$q^2$&$q+1$&$q(q+1)$\\
$c_1(1)\times c_2(\pm1)$&  $1$ &$1$&$0$&$0$&$0$&$0$&  $1$&$q$\\
 $c_1(1)\times c_3(x_2)$ &$1$&$1$&$1$&$1$&$q$&$q$&$\chi^2(x_2)+\chi^{-2}(x_2)$&$q(\chi^2(x_2)+\chi^{-2}(x_2))$\\
 $c_1(1)\times c_4(z_2)$    &$1$&$1$&$-1$&$-1$&$-q$&$-q$ &$0$&$0$\\
 $c_2(1)\times c_1(\pm1)$&  $1$ &$1$&$q$&$q$&$0$&$0$&  $q+1$ &$0$\\
 $c_2(1)\times c_2(\pm1,\gamma_2)$&$1$ &$1$&$0$&$0$&$0$&$0$& $1$&$0$\\
 $c_2(1)\times c_3(x_2)$& $1$&$1$&$1$&$1$&$0$&$0$& $\chi^2(x_2)+\chi^{-2}(x_2)$&$0$\\
 $c_2(1)\times c_4(z_2)$& $1$ &$1$&$-1$&$-1$&$0$&$0$& $0$ &$0$\\
 $c_3(x_1)\times c_1(1)$& $1$ &$1$&$q$&$q$&$q$&$q$& $q+1$ &$q+1$\\
 $c_3(x_1)\times c_2(1)$& $1$  &$1$&$0$&$0$&$0$&$0$& $1$  &$1$\\
 
 $c_3(x_1,y_1)\times c_3(x_2,y_2)$&$1$  &$\epsilon(x_1y_1)$&$1$&$\epsilon(x_1y_1)$&$1$&$\epsilon(x_1y_1)$& $\chi(x_2y_2^{-1})+\chi(x_2^{-1}y_2)$ &$\chi(x_2y_2^{-1})+\chi(x_2^{-1}y_2)$\\
 $c_3(x_1,y_1)\times c_4(z_2)$& $1$&$\epsilon(x_1y_1)$&$-1$&$-\epsilon(x_1y_1)$&$-1$&$-\epsilon(x_1y_1)$& $0$&$0$\\
 $c_4(z_1)\times c_1(1)$& $1$  &$1$&$q$&$q$&$-q$&$-q$& $q+1$  &$-(q+1)$\\
 $c_4(z_1)\times c_2(1)$& $1$ &$1$&$0$&$0$&$0$&$0$& $1$ &$-1$\\
 $c_4(z_1)\times c_3(x_2,y_2)$& $1$ &$\epsilon(x_2y_2)$&$1$&$\epsilon(x_2y_2)$&$-1$&$-\epsilon(x_1y_1)$& $\chi(x_2y_2^{-1})+\chi(x_2^{-1}y_2)$ &$-\chi(x_2y_2^{-1})-\chi(x_2^{-1}y_2)$\\
 $c_4(z_1)\times c_4(z_2)$& $1$ &$\epsilon(z_1^{q+1})$ &$-1$&$-\epsilon(z_1^{q+1})$&$1$&$\epsilon(z_1^{q+1})$& $0$  &$0$\\
 \hline
 \end{tabular}
 }
 
 Here, the representations $\St_{\PGL_2}\boxtimes1_{\PGL_2}$ and $(\St_{\PGL_2}\boxtimes1_{\PGL_2})\otimes\zeta$ are twists of $1_{\PGL_2}\boxtimes\St_{\PGL_2}$ and $(1_{\PGL_2}\boxtimes\St_{\PGL_2})\otimes\zeta$, respectively, under the unique outer automorphism.

 \adjustbox{scale=0.7,center}{
\begin{tabular}{ |c||c|c|c|c|  }
 \hline
 \multicolumn{5}{|c|}{Representations of $\SO_4(\F_q)$, cases 4-6} \\
 \hline
 &$1_{\GL_2}\boxtimes\rho_\theta$&$\Ind_{\mathbb B}^{\SO_4}(\chi_1\otimes\chi_2\otimes\chi_3\otimes\chi_4)$&$\omega_{\mathrm{princ}}^+$&$\omega_{\mathrm{princ}}^-$\\
 \hline
 $c_1(1)\times c_1(\pm1)$  & $q-1$&$(q+1)^2\chi_1\chi_2(\pm1)$& $\frac{(q+1)^2}2\epsilon(\pm1)$&$\frac{(q+1)^2}2\epsilon(\pm1)$\\
$c_1(1)\times c_2(\pm1)$ &$-1$&$(q+1)\chi_1\chi_2(\pm1)$&  $\frac{q+1}2\epsilon(\pm1)$&$\frac{q+1}2\epsilon(\pm1)$\\
 $c_1(1)\times c_3(x_2)$ &$0$&$(q+1)(\chi_3^{-1}\chi_4(x_2)+\chi_3\chi_4^{-1}(x_2))$&$(q+1)\epsilon(x_2)$&$(q+1)\epsilon(x_2)$\\
 $c_1(1)\times c_4(z_2)$   &$-\theta(z_2)-\theta(z_2^q)$&$0$&$0$&$0$\\
 $c_2(1)\times c_1(\pm1)$&$q-1$&$(q+1)\chi_1\chi_2(\pm1)$&  $\frac{q+1}2\epsilon(\pm1)$&$\frac{q+1}2\epsilon(\pm1)$\\
 $c_2(1)\times c_2(\pm1,\gamma_2)$&$-1$&$\chi_1\chi_2(\pm1)$& $\frac12(\epsilon(\pm1)+\epsilon(-\gamma_2)q)$&$\frac12(\epsilon(\pm1)-\epsilon(-\gamma_2)q)$\\
 $c_2(1)\times c_3(x_2)$&$0$&$\chi_3^{-1}\chi_4(x_2)+\chi_3\chi_4^{-1}(x_2)$&$\epsilon(x_2)$&$\epsilon(x_2)$ \\
 $c_2(1)\times c_4(z_2)$&$-\theta(z_2)-\theta(z_2^q)$&$0$& $0$&$0$\\
 $c_3(x_1)\times c_1(1)$&$q-1$&$(q+1)(\chi_1^{-1}\chi_2(x_1)+\chi_1\chi_2^{-1}(x_1))$& $(q+1)\epsilon(x_1)$&$(q+1)\epsilon(x_1)$\\
 $c_3(x_1)\times c_2(1)$&$1$&$\chi_1^{-1}\chi_2(x_1)+\chi_1\chi_2^{-1}(x_1)$& $\epsilon(x_1)$&$\epsilon(x_1)$\\
$c_3(x_1,y_1)\times c_3(x_2,y_2)$ &$0$&$(\chi_1^{-1}(x_1)\chi_2(y_1)+\chi_1(x_1)\chi_2^{-1}(y_1))(\chi_3^{-1}(x_2)\chi_4(y_2)+\chi_3(x_2)\chi_4^{-1}(y_2))$&$\begin{cases}2\epsilon(x_1x_2)&x_1y_1\in(\F_q^\times)^2\\0&x_1y_1\notin(\F_q^\times)^2\end{cases}$&$\begin{cases}2\epsilon(x_1x_2)&x_1y_1\in(\F_q^\times)^2\\0&x_1y_1\notin(\F_q^\times)^2\end{cases}$\\
 $c_3(x_1,y_1)\times c_4(z_2)$&$-\theta(z_2)-\theta(z_2^q)$&$0$&$0$&$0$\\
 $c_4(z_1)\times c_1(1)$&$q-1$&$0$&   $0$&$0$\\
 $c_4(z_1)\times c_2(1)$&$-1$&$0$& $0$ &$0$\\
 $c_4(z_1)\times c_3(x_2,y_2)$&$0$&$0$& $0$&$0$\\
 $c_4(z_1)\times c_4(z_2)$&$-\theta(z_2)-\theta(z_2^q)$&$0$& $0$&$0$\\
 \hline
 \end{tabular}
 }
 \adjustbox{scale=0.7,center}{
\begin{tabular}{ |c||c|c|c|c| }
 \hline
 \multicolumn{5}{|c|}{Representations of $\SO_4(\F_q)$, cases 7-9} \\
 \hline
 &$\Ind_{\mathbb B}^{\GL_2}(\chi_1\boxtimes\chi_2)\boxtimes\rho_\theta$&$\rho_{\theta_1}\boxtimes\rho_{\theta_2}$&$\omega_{\mathrm{cusp}}^+$&$\omega_{\mathrm{cusp}}^-$\\
 \hline
 $c_1(1)\times c_1(\pm1)$  &$(q^2-1)\theta(\pm1)$&$(q-1)^2\theta_1(\pm1)$&$\pm\frac{(q-1)^2}2\epsilon(\pm1)$&$\pm\frac{(q-1)^2}2\epsilon(\pm1)$\\
$c_1(1)\times c_2(\pm1)$ &$-(q+1)\theta(\pm1)$&$-(q-1)\theta_1(\pm1)$&$\mp\frac{q-1}2\epsilon(\pm1)$&$\mp\frac{q-1}2\epsilon(\pm1)$\\
 $c_1(1)\times c_3(x_2)$ &$0$&$0$&$0$&$0$\\
 $c_1(1)\times c_4(z_2)$    &$-(q+1)(\theta(z_2)+\theta(z_2^q))$&$-(q-1)(\theta_2(z_2)+\theta_2(z_2^q))$&$-(q-1)\theta_0(z_2)$&$-(q-1)\theta_0(z_2)$\\
 $c_2(1)\times c_1(\pm1)$ &$(q-1)\theta(\pm1)$&$-(q-1)\theta_1(\pm1)$&$\mp\frac{q-1}2\epsilon(\pm1)$&$\mp\frac{q-1}2\epsilon(\pm1)$\\
 $c_2(1)\times c_2(\pm1,\gamma_2)$&$-\theta(\pm1)$&$\theta_1(\pm1)$&$\pm\frac12(\epsilon(\pm1)+\epsilon(-\gamma_2)q)$&$\pm\frac12(\epsilon(\pm1)-\epsilon(-\gamma_2)q)$\\
 $c_2(1)\times c_3(x_2)$&$0$&$0$&$0$&$0$\\
 $c_2(1)\times c_4(z_2)$&$-(\theta(z_2)+\theta(z_2^q))$&$\theta_2(z_2)+\theta_2(z_2^q)$&$\frac12\theta_0(z)(1-\sqrt{q^*})$&$\frac12\theta_0(z)(1+\sqrt{q^*})$\\
 $c_3(x_1)\times c_1(1)$ &$(q-1)(\chi_1^{-1}\chi_2(x_1)+\chi_1\chi_2^{-1}(x_1))$&$0$&$0$&$0$\\
 $c_3(x_1)\times c_2(1)$&$\chi_1^{-1}\chi_2(x_1)+\chi_1\chi_2^{-1}(x_1)$ &$0$&$0$&$0$\\
 $c_3(x_1,y_1)\times c_3(x_2,y_2)$ &$0$&$0$&$0$&$0$\\
 $c_3(x_1,y_1)\times c_4(z_2)$&$-(\chi_1(x_1)\chi_2(y_1)+\chi_2(x_1)\chi_1(y_1))(\theta(z_2)+\theta(z_2^q))$ &$0$&$0$&$0$\\
 $c_4(z_1)\times c_1(1)$&$0$&$-(q-1)(\theta_1(z_1)+\theta_1(z_1^q))$&$-(q-1)\theta_0(z_2)$&$-(q-1)\theta_0(z_2)$\\
 $c_4(z_1)\times c_2(1)$&$0$&$\theta_1(z_2)+\theta_1(z_2^q)$&$\frac12\theta_0(z_1)(1-\sqrt{q^*})$&$\frac12\theta_0(z_1)(1+\sqrt{q^*})$\\
 $c_4(z_1)\times c_3(x_2,y_2)$ &$0$&$0$&$0$&$0$\\
 $c_4(z_1)\times c_4(z_2)$
&$0$ &$(\theta_1(z_1)+\theta_1(z_1^q))(\theta_2(z_2)+\theta_2(z_2^q))$&$\begin{cases}0&z_1^{(q+1)/2}\in\F_q^\times\\2\theta_0((z_1z_2)^{(q-1)/2})&z_1^{(q+1)/2}\notin\F_q^\times\end{cases}$&$\begin{cases}0&z_1^{(q+1)/2}\in\F_q^\times\\2\theta_0((z_1z_2)^{(q-1)/2})&z_1^{(q+1)/2}\notin\F_q^\times\end{cases}$\\
 \hline
 \end{tabular}
 }
  Here, we let $q^*:=\epsilon(-1)q\equiv1\pmod4$. The last three representations are cuspidal.
 \end{landscape}

\noindent\textit{Acknowledgements.} 
Y.X.~was supported by NSF grant DMS~2202677. K.S.~was partially supported by MIT-UROP. The authors would like to thank Anne-Marie Aubert, Roman Bezrukavnikov, Stephen DeBacker, Dick Gross, Michael Harris, Tasho Kaletha, Ju-Lee Kim, George Lusztig, Maarten Solleveld, Loren Spice, Minh-T\^{a}m Trinh and Cheng-Chiang Tsai for helpful conversations or correspondences related to this project. The authors would like to thank MIT for providing an intellectually stimulating working environment. 

\bibliographystyle{amsalpha}
\bibliography{bibfile}

\end{document}